\documentclass[a4paper, marginparwidth=15mm]{amsart}

\usepackage{amsthm}
\usepackage{amssymb}
\usepackage{dsfont}
\usepackage{enumitem}
\usepackage{stmaryrd}
\usepackage{colonequals}
\usepackage[mathcal]{euscript}
\usepackage{tikz-cd} 
\usepackage[all]{xy}
\SelectTips{cm}{}
\usepackage{microtype}
\usepackage{mathtools}

\usepackage[top=35mm, bottom=35mm, outer=30mm, inner=30mm, marginparwidth=20mm]{geometry}

\usepackage{hyperref}
\hypersetup{%
  bookmarksnumbered=true,%
  colorlinks=true,%
  linkcolor=[RGB]{0, 0, 153},
  citecolor=[RGB]{0,153,51},      
  urlcolor=[RGB]{153,0,0},
  filecolor=blue,%
  menucolor=blue,%
  bookmarksopen=true,%
  bookmarksdepth=2,%
  pageanchor=true,
  breaklinks=true,
  pdfusetitle=true}

\usepackage[nameinlink,capitalize]{cleveref}
\crefname{equation}{}{}
\usepackage[backend=bibtex, style=alphabetic]{biblatex}
\renewbibmacro{in:}{}
\addbibresource{references.bib}
\setcounter{biburllcpenalty}{200}
\setcounter{biburlucpenalty}{200}
\setcounter{biburlnumpenalty}{100}  

\makeatletter
\@namedef{subjclassname@2020}{\textup{2020} Mathematics Subject Classification}
\makeatother


\numberwithin{equation}{section}


\theoremstyle{plain}
\newtheorem{theorem}[equation]{Theorem}
\newtheorem{proposition}[equation]{Proposition}
\newtheorem{lemma}[equation]{Lemma} 
\newtheorem{corollary}[equation]{Corollary}
\newtheorem{conjecture}[equation]{Conjecture}

\theoremstyle{definition}
\newtheorem{definition}[equation]{Definition}
\newtheorem{example}[equation]{Example}

\newtheorem*{claim}{Claim}

\theoremstyle{remark}
\newtheorem{remark}[equation]{Remark} 

\newtheorem*{ack}{Acknowledgements}
\newtheorem*{conventions}{Conventions}

\newcommand*{\intref}[2]{\def\tmp{#1}\ifx\tmp\empty\hyperref[#2]{\ref*{#2}}\else\hyperref[#2]{#1~\ref*{#2}}\fi}

\hyphenation{Grothen-dieck} 
\hyphenation{com-mu-ta-tive}


\newcommand{\Aut}{\operatorname{Aut}}
\newcommand{\Bl}{\operatorname{Bl}}
\newcommand{\cc}{\operatorname{c}}
\newcommand{\CH}{\operatorname{CH}}
\newcommand{\ch}{\operatorname{ch}}

\renewcommand{\dim}{\operatorname{dim}}

\newcommand{\Hom}{\operatorname{Hom}}
\newcommand{\id}{\operatorname{id}}
\renewcommand{\Im}{\operatorname{Im}}

\renewcommand{\mod}{\operatorname{mod}}

\newcommand{\NS}{\operatorname{NS}}
\newcommand{\Orth}{\operatorname{O}}

\newcommand{\Pic}{\operatorname{Pic}}

\newcommand{\rk}{\operatorname{rk}}

\newcommand{\RHom}{\operatorname{{\mathbf R}Hom}}




\newcommand{\mcF}{\mathcal{F}}

\newcommand{\mcO}{\mathcal{O}}

\newcommand{\mcT}{\mathcal{T}}

\newcommand{\sfp}{\mathsf p} 
\newcommand{\sfq}{\mathsf q} 
\newcommand{\sfr}{\mathsf r}

\newcommand{\sfD}{\mathsf D}

\newcommand{\sfK}{\mathsf K} 
\newcommand{\sfL}{\mathsf L} 
\newcommand{\sfM}{\mathsf M}

\newcommand{\sfR}{\mathsf R}

\newcommand{\bbC}{\mathbb C}

\newcommand{\bbP}{\mathbb P}
\newcommand{\bbQ}{\mathbb Q} 
\newcommand{\bbZ}{\mathbb Z}


\title[Mutations of Numerically Exceptional Collections on Surfaces]{Mutations of Numerically Exceptional Collections on Surfaces}

\author[Johannes~Krah]{Johannes Krah}
\address{Fakult\"at f\"ur Mathematik, Universit\"at Bielefeld, D-33501 Bielefeld, Germany
}
\email{jkrah@math.uni-bielefeld.de}

\thanks{The research was funded by the Deutsche Forschungsgemeinschaft (DFG, German Research Foundation) -- SFB-TRR 358/1 2023 -- 491392403}

\begin{document}

\begin{abstract}
A conjecture of Bondal--Polishchuk states that, in particular for the bounded derived category of coherent sheaves on a smooth projective variety, the action of the braid group on full exceptional collections is transitive up to shifts.
We show that the braid group acts transitively on the set of maximal numerically exceptional collections on rational surfaces up to isometries of the Picard lattice and twists with line bundles. Considering the blow-up of the projective plane in up to $9$ points in very general position, these results lift to the derived category. More precisely, we prove that, under these assumptions, a maximal numerically exceptional collection consisting of line bundles is a full exceptional collection and any two of them are related by a sequence of mutations and shifts. The former extends a result of Elagin--Lunts and the latter a result of Kuleshov--Orlov, both concerning del Pezzo surfaces.
In contrast, we show in concomitant work \cite{krah_upcoming_work} that the blow-up of the projective plane in $10$ points in general position admits a non-full exceptional collection of maximal length consisting of line bundles.
\end{abstract}

\keywords{Rational Surfaces, Derived Categories, Exceptional Collections, Lattices}
\subjclass[2020]{14F08; (14J26, 14C20)}


\maketitle
\section{Introduction}\label{se:introduction}
Any smooth projective rational surface over an algebraically closed field admits a full exceptional collection by Orlov's projective bundle and blow-up formulae \cite{orlov_projective_bundles_monoidal_transformations_and_derived_categories_of_coherent_sheaves}, however a classification of exceptional collections on a given surface is widely open.
To construct new exceptional collections from old ones, a key tool are so-called \emph{mutations} of exceptional pairs, see \cref{sec:mutations}; these give rise to an action of the braid group in $n$ strands on the set of exceptional collections of length $n$ on such a surface.
Bondal and Polishchuk conjectured in more generality:
\begin{conjecture}[{\cite[Conj.~2.2]{bondal_polishchuk_homological_properties_of_associative_algebras_the_method_of_helices}}]\label{conj:bondal_polishchuck}
	Let $\mcT$ be a triangulated category which admits a full exceptional collection $\mcT=\langle E_1, \dots, E_n \rangle$. Then any other full exceptional collection of $\mcT$ can be constructed from $\langle E_1, \dots, E_n \rangle$ by a sequence of mutations and shifts.
\end{conjecture}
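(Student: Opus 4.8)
Since \cref{conj:bondal_polishchuck} is open in the stated generality, the realistic plan is to prove it for the rational surfaces singled out in the abstract by separating a purely lattice-theoretic statement from a geometric lifting step. First I would pass from the triangulated category to its numerical shadow. For a smooth projective surface $X$ the Euler pairing $\chi(-,-)$ descends to a nondegenerate form on the numerical Grothendieck group, which for a rational surface is identified via Riemann--Roch with $\bbZ \oplus \Pic(X) \oplus \bbZ$ (rank, first Chern class, and a corrected second Chern class). An exceptional collection of line bundles then projects to a \emph{numerically exceptional collection}: a sequence of classes $(e_1,\dots,e_n)$ with $\chi(e_i,e_i)=1$ and $\chi(e_j,e_i)=0$ for $i<j$. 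Mutations act on these classes by the reflection-type formula $e \mapsto e - \chi(e,f)\,f$, so the braid group already acts on this finite combinatorial object. The first goal is thus to prove transitivity of the braid action on \emph{maximal} numerically exceptional collections up to the evident symmetries, namely the isometries of $\Pic(X)$ fixing the canonical class and twists by line bundles.

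For the lattice core I would exploit the root-system structure of $K_X^{\perp} \subset \Pic(X)$, which for $\Bl_k\bbP^2$ is a root lattice whose Weyl group $W$ is generated by reflections in the $(-2)$-roots and acts on the orbit of $(-1)$-classes. I would fix the standard collection coming from Orlov's blow-up formula --- the pullbacks of $\mcO,\mcO(1),\mcO(2)$ from $\bbP^2$ together with the line bundles supported on the exceptional divisors --- and show that any maximal numerically exceptional collection can be transported to it by braid moves together with elements of $W$ and twists. The mechanism is a normal-form argument: use mutations to control the ranks and first Chern classes of the $e_i$, peel off extremal classes, and induct on $k$, absorbing the leftover ambiguity into the symmetry group. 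I expect the combinatorics of the $W$-action --- the orbit structure on $(-1)$-classes and the compatibility of root reflections with braid moves --- to be the main engine of this step.

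The second step is the geometric lifting on $\Bl_k\bbP^2$ for $k\le 9$ in very general position, which splits into realizability and fullness. Realizability --- that each numerical mutation is induced by an honest mutation of the corresponding line bundles --- should follow once one knows the relevant $\Ext$-groups are concentrated in a single degree, which the very general position and the line-bundle hypothesis make tractable. The genuinely hard part is \textbf{fullness}: one must show that a maximal numerically exceptional collection of line bundles generates all of $D^b(X)$, equivalently that its orthogonal complement, a subcategory with vanishing numerical Grothendieck group, is zero rather than a \emph{phantom}. This is exactly where $k\le 9$ is essential, since by the companion work phantoms do occur for $k=10$. I would attack fullness by using the very general position to rigidify the admissible line bundles, the numerical constraint $K_X^2 = 9-k \ge 0$, and a direct analysis of the residual category's invariants; excluding phantoms is where I expect the real difficulty to lie, with the boundary case $k=9$, where $K_X^2=0$, demanding the most care.
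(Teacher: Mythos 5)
You should first be clear that the statement you were given is \cref{conj:bondal_polishchuck} itself: the paper states it as a conjecture, cites a counterexample in the general triangulated setting \cite{chang_haiden_schroll_braid_group_actions_on_branched_coverings_and_full_exceptional_sequences}, and records that it remains open for $\sfD^b(X)$ with $X$ smooth projective --- so there is no ``paper's own proof'' to match, and no blind proof of the conjecture as stated could be correct. Your proposal sensibly scopes down to what the paper actually proves (\cref{thm:main_result_introduction}, \cref{thm:main_result_2}), and its architecture coincides with the paper's: descend to $\sfK_0^{\mathrm{num}}(X)$, prove braid transitivity up to twists and isometries in $\Orth(\Pic(X))_{K_X}$ (the paper does this via Kuznetsov's surface-like pseudolattices, Perling's norm-minimization algorithm, and the blow-up/blow-down calculus, rather than your ``peel off extremal classes'' normal form, but the Weyl-group engine $\Orth(\Pic(X))_{K_X}=W_X$ for at most $9$ points, \cref{lem:weyl_group_is_stabilizer}, is exactly the paper's key identification), then lift geometrically for at most $9$ very general points, with phantoms obstructing the case of $10$ points.

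The genuine gap is your fullness step. You propose to show the orthogonal complement vanishes by ``a direct analysis of the residual category's invariants,'' but this is circular: the complement of a maximal numerically exceptional collection has vanishing numerical Grothendieck group by construction, and a phantom is by definition a nonzero subcategory invisible to such additive invariants, so no invariant computation can rule it out. The paper never excludes phantoms abstractly; it proves fullness \emph{structurally}. The geometric input is \cref{lem:left_orth_divisor_-1} --- on the very general $9$-point blow-up every class with $D^2=-1$ and $\chi(D)=1$ is an actual $(-1)$-curve, resting on de Fernex/Nagata (this is the only place very generality enters, not a rigidification of line bundles). Combined with the Hille--Perling toric-surface argument (every toric system contains a class squaring to $-1$) and \cref{lem:blow_down_del_pezzo} (contracting it lands on a del Pezzo surface), induction through augmentations reduces to Elagin--Lunts, so every maximal numerically exceptional collection of line bundles is a standard augmentation and hence full (\cref{thm:generalization_elagin_lunts}, \cref{cor:generalization_elagin_lunts}). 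A second, smaller omission: for braid transitivity upstairs your ``$\Ext$-groups in one degree'' realizability remark is not enough; the paper also needs Kuleshov--Orlov's classification (\cref{lem:exc_obj_del_pezzo}) so that a class in $\sfK_0$ determines an exceptional object up to shift on the del Pezzo models, plus the explicit realization of the simple reflection $s_{H-E_1-E_2-E_3}$ as a concrete mutation sequence (\cref{lem:two_blow_up_realizations_are_related_by_mutations}) and the conjugation identity $s_{v}\circ s_{w}\circ s_{v}^{-1}=s_{s_v(w)}$ to propagate it through $W_X$ (\cref{thm:transitivity_on_9_points}).
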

Recently, this conjecture was proven to be false \cite{chang_haiden_schroll_braid_group_actions_on_branched_coverings_and_full_exceptional_sequences} and a counterexample is given by a Fukaya category of a certain smooth two-dimensional real manifold.
To our knowledge, the conjecture still remains open for triangulated categories $\mcT = \sfD^b(\mathrm{Coh}(X))$, where $X$ is a smooth projective variety.

This paper is concerned with the question of classifying (numerically) exceptional collections on a given algebraic surface.
Exceptional collections on rational surfaces have been previously studied in \cite{hille_perling_exceptional_sequences_of_invertible_sheaves_on_rational_surfaces} and \cite{perling_combinatorial_aspects_of_exceptional_sequences_on_rationl_surfaces} via considering their associated toric surfaces. A classification of surfaces admitting a \emph{numerically exceptional collection of maximal length} was carried out in \cite{vial_exceptional_collections_and_the_neron_severi_lattice_for_surfaces}.
\cref{conj:bondal_polishchuck} was first verified in the cases $\mcT = \sfD^b(X)$, where $X$ is either $\bbP^2$ or $\bbP^1\times \bbP^1$. The case where $X$ is a del Pezzo surfaces is treated in \cite{kuleshov_orlov_exceptional_sheaves_on_del_pezzo_surfaces}. In \cite{kuleshov_exceptional_and_rigid_sheaves_on_surfaces_with_anticaonical_class_without_base_components}, similar results for surfaces with basepoint-free anticanonical class were obtained. A full discussion of exceptional collections on the Hirzebruch surface $\Sigma_2$ was worked out in \cite{ishi_okawa_uehara_exceptional_collections_on_sigma_2} and \cref{conj:bondal_polishchuck} was settled for $\sfD^b(\Sigma_2)$.

In the first part of the paper, we consider the images of exceptional collections in $\sfK_0^{\mathrm{num}}(X)$ instead of the objects in the derived category itself. These so-called \emph{numerically exceptional collections} on a surface $X$ with $\chi(\mcO_X)=1$ have been previously investigated by Perling and Vial \cite{perling_combinatorial_aspects_of_exceptional_sequences_on_rationl_surfaces, vial_exceptional_collections_and_the_neron_severi_lattice_for_surfaces}. Their lattice-theoretic arguments have been reworked by Kuznetsov in the abstract setting of \emph{surface-like pseudolattices}, introduced in \cite{kuznetsov_exceptional_collections_in_surface_like_categories}. Independently, a similar notion of a \emph{surface-type Serre lattice} was developed in \cite{de_thanhoffer_de_volcsey_van_den_berg_on_an_analogue_of_the_markov_equation_for_exceptional_collections_of_length_4}. In \cref{se:basics} we unify both formalisms in order to prove in \cref{se:proof_of_main_result} part \labelcref{item:main_result_introduction_i} of the following
\begin{theorem}[\cref{thm:main_result}, \cref{cor:realizing_isometries_as_mutations}]\label{thm:main_result_introduction}
	Let $X$ be a smooth projective surface over a field $k$ with $\chi(\mcO_X)=1$ and let $e_\bullet$ and $f_\bullet$ be exceptional bases of $\sfK_0^{\mathrm{num}}(X)$.
	\begin{enumerate}
		\item \label{item:main_result_introduction_i} There exists a $\bbZ$-linear automorphism $\phi\colon \sfK_0^{\mathrm{num}}(X) \to \sfK_0^{\mathrm{num}}(X)$ preserving the Euler pairing and the rank of elements such that $\phi(e_\bullet)$ can be transformed to $f_\bullet$ by a sequence of mutations and sign changes.
		\item \label{item:main_result_introduction_ii} If in addition $\rk \sfK_0^{\mathrm{num}}(X) \leq 12$, then $e_\bullet$ and $f_\bullet$ are related by a sequence of mutations and sign changes.
	\end{enumerate}
\end{theorem}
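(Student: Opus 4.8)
The plan is to read both statements as assertions about the lattice $(K,\chi,\rk)$, where $K\colonequals\sfK_0^{\mathrm{num}}(X)$ carries the unimodular (non-symmetric) Euler pairing $\chi$ and the rank homomorphism, exploiting the surface-like pseudolattice structure assembled in \cref{se:basics}. The first thing I would record is the interplay between the three kinds of operations involved: mutations and sign changes are operations on the set $\mcE$ of exceptional bases, generating a quotient $\Gamma$ of $B_n\times(\bbZ/2)^n$, whereas a rank-preserving isometry $\phi$ of $(K,\chi)$ acts on $\mcE$ coordinatewise and, because it preserves $\chi$, commutes with every mutation and sign change. I would then symmetrize $\chi$ and quotient by its radical to obtain the Picard lattice $N$ of signature $(1,\rho-1)$ with $\rho=\rk\NS(X)$; being odd and unimodular it is isometric to $I_{1,\rho-1}$, and the normalization $\chi(\mcO_X)=1$ together with the existence of an exceptional basis pins down the canonical class $K_X\in N$ up to the constraint $K_X^2=10-\rho$. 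Under this identification the rank-preserving isometries of $(K,\chi)$ are exactly the isometries of $N$ fixing $K_X$.

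For part \labelcref{item:main_result_introduction_i} I would fix the geometric reference basis $g_\bullet$ furnished by Orlov's blow-up formula and prove a reduction lemma: every exceptional basis can be carried by mutations and sign changes into a \emph{normal form}, meaning one with a prescribed rank sequence and the standard Gram matrix $(\chi(g_i,g_j))$. The engine is a complexity measure on $\mcE$ built from the ranks $\rk(e_i)$ and their pairwise Euler numbers, paired with a lemma that, whenever a basis is not in normal form, some elementary mutation strictly decreases it; this is the combinatorial heart, and it is where the surface-like relations among the $\chi(e_i,e_j)$ guarantee that a rank-lowering mutation exists. Two normal forms then share the same Gram data, so the automorphism taking one to the other is automatically a rank-preserving isometry; that is, the normal forms constitute a single $\mathrm{Isom}_{\rk}(K)$-orbit. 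Applying the lemma to $e_\bullet$ and $f_\bullet$ gives $e_\bullet\sim_\Gamma\bar e_\bullet$, $f_\bullet\sim_\Gamma\bar f_\bullet$ and $\bar f_\bullet=\phi(\bar e_\bullet)$; since $\phi$ commutes with $\Gamma$ this yields $\phi(e_\bullet)\sim_\Gamma\phi(\bar e_\bullet)=\bar f_\bullet\sim_\Gamma f_\bullet$, as required.

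Part \labelcref{item:main_result_introduction_ii} reduces through part \labelcref{item:main_result_introduction_i} to showing that, when $\rk K\le 12$—equivalently $\rho\le 10$, equivalently $K_X^2\ge 0$—every rank-preserving isometry $\phi$ is itself realized by a sequence of mutations and sign changes. I would analyze the stabilizer of $K_X$ in $\Orth(N)$ through its action on $K_X^\perp$, which is the negative (semi)definite root lattice of type $E_{\rho-1}$: finite for $\rho\le 9$, and the affine lattice $\widehat E_8$ with radical $\bbZ K_X$ at the boundary $\rho=10$. Since $K_X^2\ge 0$, this stabilizer is generated by the reflections in the $(-2)$-roots of $K_X^\perp$ together with $-\id$ and the finite group of diagram symmetries, and I would exhibit each such generator on the level of exceptional bases as the numerical shadow of a concrete mutation or braid of mutations, thereby realizing all of $\mathrm{Isom}_{\rk}(K)$ inside $\Gamma$.

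The step I expect to be the main obstacle is the boundary case $\rho=10$, i.e.\ $\rk K=12$, where $K_X$ is isotropic and the relevant group is the infinite affine Weyl group $W(\widehat E_8)=W(E_8)\ltimes E_8$: one must verify both that the translation part is realized by mutations and—more delicately—that the stabilizer of $K_X$ contains nothing beyond this affine Weyl group extended by the evident symmetries. This is precisely the phenomenon that collapses once $K_X^2<0$; for $\rho=11$ the stabilizer is strictly larger than the Weyl group, so isometries survive that cannot be absorbed into mutations, in agreement with the non-full maximal exceptional collection on the blow-up of $\bbP^2$ in $10$ points constructed in \cite{krah_upcoming_work}.
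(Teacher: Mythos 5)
Your overall architecture does track the paper's: for \labelcref{item:main_result_introduction_i}, mutate both bases into a normal form and use that rank-preserving isometries commute with mutations and sign changes; for \labelcref{item:main_result_introduction_ii}, identify $\Orth(\Pic(X))_{K_X}$ with a reflection group and realize its generators by mutations, with the conjugation trick $s_{s_v(w)}\circ s_v = s_v \circ s_w$ handling compositions. But the two steps you merely assert are exactly where the paper's work lies. In \labelcref{item:main_result_introduction_i}, your "complexity-decreasing mutation" engine is Perling's algorithm (\cref{thm:perlings_algorithm}), which only delivers a norm-minimal basis with three \emph{or four} rank-one entries; one must still rule out the four-rank-one outcome on blow-ups and pin down the Gram matrix of the normal form. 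The paper does this through the explicit norm-reducing mutation sequence \cref{eq:mutations_hirzebruch_iso} in \cref{prop:blow_up_two_points_transitivity} together with the defect and degree bookkeeping of \cref{lem:formula_defect_contraction} and \cref{lem:formula_degree_blow_up}, which forces each numerical blow-up center to be $\pm\sfp$ with multiplicity one. Your claim that "two normal forms share the same Gram data" is precisely this content, not a consequence of it. Relatedly, your assertion that the symmetrized quotient is odd, hence isometric to $I_{1,\rho-1}$, silently discards the even branch of Vial's classification (\cref{thm:classification}), i.e.\ the case $\bbP^1\times\bbP^1$, which the hypotheses do not exclude and which the paper treats separately in \cref{prop:result_for_P1_x_P1}.

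The serious gap is in \labelcref{item:main_result_introduction_ii}. First, your generator list for the stabilizer is wrong in detail: at $\rk\sfK_0^{\mathrm{num}}(X)=12$ the class $K_X$ is isotropic and lies in $K_X^\perp$, so $-\id$ on $K_X^\perp$ sends $K_X$ to $-K_X$ and is \emph{not} in the stabilizer (the involution $\iota$ of \cref{lem:weil_group_not_stabilizer_10_points} appears only at ten points, where $K_X^2=-1$ and $\Pic(X)$ splits off $\bbZ K_X$); and for $\rho\le 9$ neither $-\id$ nor diagram symmetries glue to isometries of $\Pic(X)$ fixing $K_X$ (the induced actions on the discriminant groups of $\bbZ K_X$ and $K_X^\perp$ do not match), so the stabilizer is exactly $W_X$. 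Second, and decisively, you leave unproven the statement you correctly single out as the crux — that at $\rho=10$ nothing beyond the affine Weyl group stabilizes $K_X$. The paper does \emph{not} settle this by lattice theory: \cref{lem:weyl_group_is_stabilizer} deduces $W_X=\Orth(\Pic(X))_{K_X}$ from geometry, namely \cref{lem:left_orth_divisor_-1} (on the very general $9$-point blow-up every class with $D^2=-1$ and $\chi(D)=1$ is a $(-1)$-curve, resting on de Fernex/Nagata) combined with Harbourne's theorem that an isometry fixing $K_X$ and preserving $(-1)$-curves lies in $W_X$. Without this input, or a genuine Vinberg-style computation as in \cref{lem:weil_group_not_stabilizer_10_points}, your realization step has nothing to realize, so the proposal does not close as written; the explicit realization of the Cremona reflection by mutations (\cref{lem:two_blow_up_realizations_are_related_by_mutations}) is likewise asserted rather than produced, though that part is comparatively routine.
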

By definition, an exceptional basis of $\sfK_0^{\mathrm{num}}(X)$ is the class of a numerically exceptional collection of maximal length in $\sfK_0^{\mathrm{num}}(X)$, see \cref{def:basics_pseudolattices}. Thus, we can reformulate \cref{thm:main_result_introduction}~\labelcref{item:main_result_introduction_i} as:
Given two numerically exceptional collections $(E_1, \dots, E_n)$ and $(F_1, \dots, F_n)$ of maximal length on a surface $X$ with $\chi(\mcO_X)=1$ we can find a sequence of mutations and shifts $\sigma$ such that $\chi(\sigma(E_i), \sigma(E_j))=\chi(F_i, F_j)$ and $\rk\sigma(E_i)=\rk F_i$ holds for all $1\leq i, j \leq n$.

Allowing automorphisms of $\sfK_0^{\mathrm{num}}(X)$ preserving $\chi$ in addition to mutations and shifts was classically considered in the case of $X=\bbP^2$, where full exceptional collections can be interpreted as solutions of the Markov equation, see, e.g., \cite[\S\,7]{gorodentsev_kuleshov_helix_theory}. For lattices of higher rank this action was considered for instance in \cite{gorodentsev_helix_theory_and_nonsymmetrical_bilinear_forms}.

To prove \cref{thm:main_result_introduction}~\labelcref{item:main_result_introduction_i} we can restrict to the case of $X$ being either $\bbP^1 \times \bbP^1$ or a blow-up of $\bbP^2$ in a finite number of points by using Vial's classification recalled in \cref{thm:classification}.
Moreover, the group $\Aut (\sfK_0^{\mathrm{num}}(X))$ of isometries $\phi$ as in \cref{thm:main_result_introduction}~\labelcref{item:main_result_introduction_i} fits into a short exact sequence
\begin{equation*}
	1 \to (\Pic(X)/ \sim_{\mathrm{num}}) \to \Aut (\sfK_0^{\mathrm{num}}(X)) \to \Orth(\Pic(X)/\sim_{\mathrm{num}})_{K_X} \to 1,
\end{equation*}
where $\Orth(\Pic(X)/\sim_{\mathrm{num}})_{K_X}=\{f \in \Orth(\Pic(X)/\sim_{\mathrm{num}}) \mid f(K_X)=K_X \}$ is the stabilizer of the canonical class in the orthogonal group of $\Orth(\Pic(X)/\sim_{\mathrm{num}})$; see \cref{lem:orth_trans_lift_to_isometries}.

In \cref{se:9_points} we address the question how to lift \cref{thm:main_result_introduction}~\labelcref{item:main_result_introduction_i} to $\sfD^b(X)$ and prove \cref{thm:main_result_introduction}~\labelcref{item:main_result_introduction_ii}. The following two conditions are sufficient to deduce from \cref{thm:main_result_introduction}~\labelcref{item:main_result_introduction_i} that mutations and shifts act transitively on the set of full exceptional collections on $X$: 
\begin{enumerate}[label={(\alph*)}]
	\item \label{item:condition_i} The action of an isometry $\phi\colon \sfK_0^{\mathrm{num}}(X) \to \sfK_0^{\mathrm{num}}(X)$ as in \cref{thm:main_result_introduction}~\labelcref{item:main_result_introduction_i} can be realized as a sequence of mutations and shifts.
	\item \label{item:condition_ii} Two full exceptional collections sharing the same class in $\sfK_0^{\mathrm{num}}(X)$ can be transformed into each other by a sequence of mutations and shifts.
\end{enumerate}
If $X$ is a del Pezzo surface, the arguments of \cite{kuleshov_orlov_exceptional_sheaves_on_del_pezzo_surfaces} prove \labelcref{item:condition_ii}, see \cref{lem:exc_obj_del_pezzo}, and for the Hirzebruch surface $\Sigma_2$ the condition \labelcref{item:condition_ii} is verified in \cite[\S\,6]{ishi_okawa_uehara_exceptional_collections_on_sigma_2}.

The main theorem of Elagin--Lunts in \cite{elagin_lunts_on_full_exceptional_collections_of_line_bundles_on_del_pezzo_surfaces} states that any numerically exceptional collection consisting of line bundles on a del Pezzo surface is a full exceptional collection obtained from Orlov's blow-up formula applied to a minimal model.
We extend this result to the blow-up $X$ of $9$ points in very general position in $\bbP^2_\bbC$.
\begin{theorem}[\cref{cor:generalization_elagin_lunts}, \cref{thm:transitivity_on_9_points}]\label{thm:main_result_2}
	Let $X$ be the blow-up of $\bbP^2_\bbC$ in $9$ points in very general position. Then
	\begin{enumerate}
		\item \label{item:main_result_2_i} any numerically exceptional collection of maximal length consisting of line bundles is a full exceptional collection, and
		\item \label{item:main_result_2_ii} any two such collections are related by mutations and shifts.
	\end{enumerate}
\end{theorem}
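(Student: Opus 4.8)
The plan is to deduce both statements from the numerical transitivity of \cref{thm:main_result_introduction}, together with geometric input special to the very general $9$-point blow-up. Since $b_2(X)=10$, we have $\rk\sfK_0^{\mathrm{num}}(X)=12$, so \cref{thm:main_result_introduction}~\labelcref{item:main_result_introduction_ii} applies and the numerical classes of any two exceptional bases are related by mutations and sign changes alone, with no auxiliary isometry; in particular condition~\labelcref{item:condition_i} is vacuous in the present setting. I will also use that on a rational surface linear and numerical equivalence of divisors coincide (as $\nH^1(\mcO_X)=0$), so that a line bundle is determined by its class in $\sfK_0^{\mathrm{num}}(X)$.

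For part~\labelcref{item:main_result_2_i}, which generalizes Elagin--Lunts~\cite{elagin_lunts_on_full_exceptional_collections_of_line_bundles_on_del_pezzo_surfaces}, it is essential to argue with the whole collection rather than pairwise: a single numerically exceptional pair of line bundles need not be exceptional, since for a $(-1)$-curve $E$ one has $\chi(X,2E)=0$ while $\nH^0(X,2E)\neq 0$. Instead I would show that, possibly after twisting by a line bundle, a maximal numerically exceptional collection of line bundles is compatible with the contraction $\pi\col X\to X'$ of a suitable $(-1)$-curve, where $X'$ is the blow-up of $\bbP^2_\bbC$ in $8$ very general points and hence a del Pezzo surface. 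Orlov's blow-up formula together with the Elagin--Lunts structure theorem on $X'$ would then yield at once that the given collection is exceptional and full.

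The crux, and the reason the case of $9$ points is sharp, is the existence of such a compatible contraction. In contrast to del Pezzo surfaces, the blow-up in $9$ very general points carries \emph{infinitely} many $(-1)$-curves -- this is precisely the transition from the finite root system $E_8$ to its affine counterpart -- and one must show that the numerical data single out a usable one. Equivalently, fullness amounts to the vanishing of the right orthogonal $\mcA^\perp$ of $\mcA=\langle L_1,\dots,L_{12}\rangle$; by additivity of Hochschild homology and the Hodge diamond of a rational surface one computes $\HH_\bullet(\mcA^\perp)=0$, so that $\mcA^\perp$ is a potential phantom which no numerical invariant can detect. The geometric input that forces $\mcA^\perp=0$ is the nefness of $-K_X$ (here $K_X^2=0$); for $10$ points $-K_X$ ceases to be nef, a genuine phantom appears, and the maximal collection of line bundles is no longer full, see \cite{krah_upcoming_work}. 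This phantom‑exclusion step is the main obstacle.

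For part~\labelcref{item:main_result_2_ii}, let $\mcE=(L_1,\dots,L_{12})$ and $\mcF=(M_1,\dots,M_{12})$ be two such collections. By part~\labelcref{item:main_result_2_i} both are full, and by \cref{thm:main_result_introduction}~\labelcref{item:main_result_introduction_ii} their classes in $\sfK_0^{\mathrm{num}}(X)$ are related by a sequence of mutations and sign changes. Performing the corresponding derived mutations and shifts on $\mcE$ yields a full exceptional collection $\mcE'$ with $[\mcE']=[\mcF]$ in $\sfK_0^{\mathrm{num}}(X)$; note that $\mcE'$ need not consist of line bundles, as a mutation of two line bundles is typically not a line bundle. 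It therefore remains to establish condition~\labelcref{item:condition_ii} for $X$, namely that two full exceptional collections sharing a numerical class are related by mutations and shifts. For del Pezzo surfaces this is \cref{lem:exc_obj_del_pezzo}, following Kuleshov--Orlov~\cite{kuleshov_orlov_exceptional_sheaves_on_del_pezzo_surfaces}, and the second main obstacle is to extend it to the very general $9$-point blow-up. I would adapt their argument to the nef-anticanonical setting -- showing that every exceptional object of $\sfD^b(X)$ is a shift of a rigid exceptional sheaf determined by its numerical class, and then inducting on the length to align $\mcE'$ with $\mcF$ term by term. Since a line bundle is determined by its class in $\sfK_0^{\mathrm{num}}(X)$, the induction terminates, exhibiting $\mcE$ and $\mcF$ as related by mutations and shifts.
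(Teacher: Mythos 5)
Your derivation of part~\labelcref{item:main_result_2_ii} is circular relative to the paper's architecture: you invoke \cref{thm:main_result_introduction}~\labelcref{item:main_result_introduction_ii} to align the numerical classes, but in the paper that statement is \cref{cor:realizing_isometries_as_mutations}, whose proof cites \cref{thm:transitivity_on_9_points} --- i.e.\ the very statement you are proving. What is actually available at this stage is only \cref{thm:main_result_introduction}~\labelcref{item:main_result_introduction_i}, which leaves an auxiliary isometry $\phi \in \Aut(\sfK_0^{\mathrm{num}}(X))$ unaccounted for; so condition~\labelcref{item:condition_i} is not ``vacuous'' --- eliminating $\phi$ is the main content of the proof. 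The paper does this by identifying $\Orth(\Pic(X))_{K_X}$ with the Weyl group $W_X$ (\cref{lem:weyl_group_is_stabilizer}, resting on \cref{lem:left_orth_divisor_-1}, the de Fernex/Nagata input on very general position, and Harbourne's theorem), and then realizing each simple reflection by mutations: the reflections in $E_i-E_{i+1}$ are pair swaps, the Cremona reflection in $H-E_1-E_2-E_3$ is realized by the explicit mutation sequence of \cref{lem:two_blow_up_realizations_are_related_by_mutations}, and the identity $s_{s_v(w)}\circ s_v = s_v \circ s_w$ drives the induction. None of this machinery appears in your proposal.

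The route you propose instead --- verifying condition~\labelcref{item:condition_ii} for $X$ by adapting Kuleshov--Orlov to show every exceptional object of $\sfD^b(X)$ is a shift of a sheaf determined by its numerical class --- is precisely what the paper avoids and flags as inaccessible: since $|-K_X|$ consists of a single curve, $-K_X$ is not basepoint-free, and the paper states explicitly that the techniques of \cite{kuleshov_exceptional_and_rigid_sheaves_on_surfaces_with_anticaonical_class_without_base_components} cannot be applied; no analogue of \cref{lem:exc_obj_del_pezzo} is known on $X$. The paper sidesteps condition~\labelcref{item:condition_ii} entirely by staying inside line-bundle collections: via toric systems, every such collection contains a numerical $(-1)$-class, which by \cref{lem:left_orth_divisor_-1} is an actual $(-1)$-curve; one mutates the corresponding pair to produce a torsion sheaf $\mcO_E(-1)$, blows down to a del Pezzo surface (\cref{lem:blow_down_del_pezzo}), and only \emph{there} uses \cref{lem:exc_obj_del_pezzo}. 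Your part~\labelcref{item:main_result_2_i} has the same structural gap: the paper needs no phantom exclusion --- fullness is automatic once the toric system is shown to be a standard augmentation (\cref{thm:generalization_elagin_lunts}, which requires the Hille--Perling toric-surface argument producing some $A_i$ with $A_i^2=-1$, plus \cref{lem:left_orth_divisor_-1} and \cref{lem:blow_down_del_pezzo}), since standard augmentations correspond to full exceptional collections. Your assertion that nefness of $-K_X$ ``forces $\mcA^\perp=0$'' is a heuristic, not an argument; the two steps you defer as obstacles constitute essentially the entire proof.
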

The position of the $9$ points is discussed in \cref{rmk:pos_of_9_points}. Further, our results in \cite{krah_upcoming_work} show that the statements of \cref{thm:main_result_2} do not hold for blow-ups of more than $10$ points.

The proof of \cref{thm:main_result_2}~\labelcref{item:main_result_2_ii} is closely linked to the proof of \cref{thm:main_result_introduction}~\labelcref{item:main_result_introduction_ii}.
The key ingredient is the identification of the aforementioned group $\Orth(\Pic(X))_{K_X}$ with the Weyl group $W_X$ of a root system embedded in $\Pic(X)$, see \cref{lem:weyl_group_is_stabilizer}.
Although this lattice-theoretic equality holds for the blow-up of up to $9$ points regardless of their position, our argument relies on a result of Nagata \cite{nagata_on_rationl_surface_ii} which uses the actual geometry of $X$.
The equality $\Orth(\Pic(X))_{K_X}=W_X$ then enables us to verify condition \labelcref{item:condition_i} for the blow-up in up to $9$ points in very general position and thus we obtain  \cref{thm:main_result_2}~\labelcref{item:main_result_2_ii} and \cref{thm:main_result_introduction}~\labelcref{item:main_result_introduction_ii}.

In addition, our techniques provide a new proof of the fact that any two full exceptional collections on a del Pezzo surface are related by mutations and shifts; see \cref{cor:new_proof_trans_del_pezzo}. This result was proven in the first place by Kuleshov--Orlov in \cite[Thm.~7.7]{kuleshov_orlov_exceptional_sheaves_on_del_pezzo_surfaces}.

Finally \cref{sec:blow_up_10_points} discusses the lattice-theoretic behavior of the blow-up $X$ of $\bbP^2$ in $10$ points. In this case the Weyl group $W_X \subseteq \Orth(\Pic(X))_{K_X}$ has index two and $\Pic (X)$ admits an additional involution $\iota$ which fixes the canonical class $K_X$; see \cref{lem:weil_group_not_stabilizer_10_points}.
While the action of $W_X$ on exceptional collections of line bundles can be modeled by Cremona transformations of $\bbP^2$, the action of $\iota$ gives rise to an extraordinary numerically exceptional collection of line bundles.
In \cite{krah_upcoming_work} we show that the numerically exceptional collection obtained from $\iota$ is an exceptional collection of maximal length which is not full, provided the points are in general position.
As a consequence, $\sfD^b(X)$ contains a phantom subcategory and the braid group action on exceptional collections of maximal length is not transitive.
If one could verify condition \labelcref{item:condition_ii} for exceptional collections of maximal length on $X$, the results of \cite{krah_upcoming_work} would imply that the numerical bound in \cref{thm:main_result_introduction}~\labelcref{item:main_result_introduction_ii} is optimal, see \cref{rmk:sharpness_of_bound_on_rank}.

\begin{conventions}
	In this paper the term \emph{surface} always refers to a smooth projective variety of dimension $2$ over a field. The results in \cref{se:proof_of_main_result} are independent of the chosen base field, in \cref{se:9_points} we exclusively work over the complex numbers.
\end{conventions}

\begin{ack}
	This work is part of the author's dissertation, supervised by Charles Vial whom we wish to thank for numerous helpful discussions and explanations. Especially his encouragement to improve the results in this paper led to the developments in \cite{krah_upcoming_work}.
	Further, the author thanks Pieter Belmans for reading an earlier draft of this paper.
\end{ack}

\section{Numerically Exceptional Collections and Pseudolattices}
\label{se:basics}
We recall the necessary terminology of \emph{surface-like pseudolattices} as it is presented in \cite{kuznetsov_exceptional_collections_in_surface_like_categories}. Independently, the notion of a \emph{surface-type Serre lattice} was introduced in \cite{de_thanhoffer_de_volcsey_van_den_berg_on_an_analogue_of_the_markov_equation_for_exceptional_collections_of_length_4}.
After comparing both notions, we discuss the blow-up operation for pseudolattices in detail. Numerical blow-ups are explicitly mentioned in \cite{de_thanhoffer_de_volcsey_van_den_berg_on_an_analogue_of_the_markov_equation_for_exceptional_collections_of_length_4} but were already used in \cite{hille_perling_exceptional_sequences_of_invertible_sheaves_on_rational_surfaces, vial_exceptional_collections_and_the_neron_severi_lattice_for_surfaces, kuznetsov_exceptional_collections_in_surface_like_categories} in a slightly different manner.

\subsection{Exceptional collections}
Let $X$ be a smooth projective variety over a field $k$ and let $\sfD^b(X)\coloneqq \sfD^b(\mathrm{Coh}(X))$ be the bounded derived category of coherent sheaves on $X$. An object $E\in \sfD^b(X)$ is \emph{exceptional} if $\Hom(E_i, E_i)=k$ and $\Hom(E_i, E_i[l])=0$ for all $l\neq 0$. A \emph{full exceptional collection} in $\sfD^b(X)$ is a sequence of exceptional objects $(E_1, \dots, E_n)$ such that $E_1, \dots, E_n$ generate $\sfD^b(X)$ as a triangulated category and $\Hom(E_i, E_j[l])=0$ for all $l \in \bbZ$ whenever $i>j$.
When considering only their images in the Grothendieck group $\sfK_0(X)\coloneqq \sfK_0(\sfD^b(X))$ homomorphism spaces have to be exchanged with alternating sums over their dimensions. For this, let $$\chi(E, F)\coloneqq \sum_{j \in \bbZ} (-1)^j\dim_k \Hom(E,F[j])$$ be the \emph{Euler pairing}. It gives rise to a bilinear form on $\sfK_0(X)$ and an object $E\in \sfD^b(X)$ is called \emph{numerically exceptional} if $\chi(E,E)=1$.
\begin{definition}
	A \emph{numerically exceptional collection} in $\sfD^b(X)$ is a sequence of numerically exceptional objects $(E_1, \dots, E_n)$ such that $\chi(E_i, E_j)=0$ whenever $i>j$. The sequence is said to be of \emph{maximal length} if $[E_1], \dots, [E_n] \in \sfK_0(X)$ generate $\sfK_0^{\mathrm{num}}(X)$ as a $\bbZ$-module or equivalently if $n=\rk \sfK_0^{\mathrm{num}}(X)$.
\end{definition}
Here $\sfK_0^{\mathrm{num}}(X)\coloneqq \sfK_0(X)/\ker\chi$ denotes the \emph{numerical Grothendieck group}. Note that the left and right kernels of $\chi$ coincide thanks to Serre duality.
Clearly $\chi$ defines a non-degenerate bilinear form on $\sfK_0^{\mathrm{num}}(X)$. Therefore studying numerically exceptional collections can be reduced to studying non-degenerate $\bbZ$-valued bilinear forms, which will be formalized in the notion of a \emph{pseudolattice}.

\subsection{Surface-like pseudolattices}
We begin with recalling the notion of a pseudolattice in the sense of Kuznetsov.
\begin{definition}[{\cite[Def.~2.1]{kuznetsov_exceptional_collections_in_surface_like_categories}}]\label{def:basics_pseudolattices}
	A \emph{pseudolattice} is a finitely generated free abelian group $G$ together with a non-degenerate bilinear form $\chi \colon G \otimes_\bbZ G \to \bbZ$. An \emph{isometry} $\phi\colon (G, \chi_G) \to (H, \chi_H)$ between pseudolattices is a $\bbZ$-linear isomorphism which satisfies $\chi_G(v,w)=\chi_H(\phi(v), \phi(w))$ for all $v,w \in G$.
	\begin{itemize}
		\item The pseudolattice $(G,\chi)$ is \emph{unimodular} if $\chi$ induces an isomorphism $G \to \Hom_{\bbZ}(G, \bbZ)$.
		\item Let $e_\bullet = (e_1, \dots, e_n)$ be a basis of $G$, then $(\chi(e_i, e_j))_{i,j}$ is called the \emph{Gram matrix} with respect to $e_\bullet$.
		\item An element $e\in G$ is called \emph{exceptional} if $\chi(e,e)=1$.
		\item An ordered basis $e_\bullet$ is called \emph{exceptional basis} if the corresponding Gram matrix is upper unitriangular, i.e.~$\chi(e_i, e_j)=0$ whenever $i>j$ and $\chi(e_i, e_i)=1$ for all $i$.
		\item A \emph{Serre operator} is an isometry $S\colon G \to G$ satisfying $\chi(v, w)=\chi(w,S(v))$ for all $v, w \in G$.
	\end{itemize}
\end{definition}
Note that the lattice $G$ is unimodular if and only if the Gram matrix has determinant $\pm 1$. The Serre operator is unique, provided it exists, and if $G$ is unimodular, it is given by $M^{-1} M^T$, where $M$ is the Gram matrix of $\chi$ with respect to a chosen basis.
In case we need to pass to rational coefficients, we use the notation $G_\bbQ \coloneqq G\otimes_\bbZ \bbQ$ for a pseudolattice $G$ (or more generally for any abelian group).
\begin{definition}[{\cite[Def.~3.1]{kuznetsov_exceptional_collections_in_surface_like_categories}}]
	A pseudolattice $(G, \chi)$ is \emph{surface-like} if there exists a primitive element $\sfp \in G$ satisfying
	\begin{enumerate}
		\item $\chi(\sfp, \sfp)=0$,
		\item $\chi(\sfp, v)=\chi(v,\sfp)$ for all $v \in G$,
		\item $\chi$ is symmetric on $\sfp^\perp\coloneqq \{v\in G \mid \chi(\sfp,v)=0\}$.
	\end{enumerate}
	Such an element $\sfp$ is called a \emph{point-like element}.
\end{definition}
The terminology is justified by the following geometric example.
\begin{example}[Pseudolattices from surfaces]\label{ex:standard_example}
	Let $S$ be a smooth projective surface over a field $k$ which admits a $k$-valued point $i \colon \{x\} \hookrightarrow S$. For example if $S$ is rational, the existence of a $k$-valued point is guaranteed by the Lang--Nishimura Theorem. Let $G\coloneqq\sfK_0^{\mathrm{num}}(S)$ be the numerical Grothendieck group together with its Euler pairing. Then the class of the skyscraper sheaf $i_* k(x)=\mcO_{x}$ is a point-like element in $G$. An exceptional basis of $G$ is the same as the image of a numerically exceptional collection of maximal length on $S$ in $\sfK_0^{\mathrm{num}}(S)$.
\end{example}
\begin{remark}
	More generally, any $0$-cycle of degree $1$ in $\CH_0(S)$ provides a point-like element by realizing it as a Chern character of a complex of skyscraper sheaves. However all the surfaces we consider are rational, for that reason we will only consider point-like elements as in \cref{ex:standard_example}.
\end{remark}
From now on any surface-like pseudolattices $\sfK_0^{\mathrm{num}}(S)$, where $S$ is a surface over $k$, is implicitly assumed to be endowed with the Euler pairing and a point-like element given by the class of a skyscraper sheaf of a $k$-valued point. Recall that for a smooth projective surface $S$ the Chern character induces an isomorphism 
\begin{equation}\label{eq:chern_character_iso}
	\ch \colon \sfK_0^{\mathrm{num}}(S)_{\bbQ} \xrightarrow{\sim} \bbQ \oplus \left(\mathrm{Pic}(X)/\sim_{\mathrm{num}}\right)_{\bbQ} \oplus \bbQ,
\end{equation}
see, e.g., \cite[Lem.~2.1]{elagin_lunts_on_full_exceptional_collections_of_line_bundles_on_del_pezzo_surfaces}. For objects $E, F \in \sfD^b(X)$ with $e\coloneqq\rk E$ and $f\coloneqq\rk F$, Riemann--Roch yields
\begin{align}\label{eq:Riemann_Roch}
	\chi(E, F)={ }& ef\chi(\mcO_X) + \frac{1}{2} \left(f\cc_1(E)^2 + e\cc_1(F)^2 - 2\cc_1(E)\cc_1(F) \right) \\ &- \frac{1}{2}K_X (e\cc_1(F)-f\cc_1(E)) - (f\cc_2(E)+e\cc_2(F)).\nonumber
\end{align}
Given a surface-like pseudolattice $G$ with point-like element $\sfp$, we define the \emph{rank function} with respect to $\sfp$ to be $\sfr(-)\coloneqq\chi(\sfp,-)=\chi(-,\sfp)$. Then $\sfp^\perp = {^\perp \sfp} =\ker(\sfr)$ and we obtain the analogue of the decomposition in \cref{eq:chern_character_iso}.
\begin{lemma}[{\cite[Lem.~3.10, Lem.~3.11]{kuznetsov_exceptional_collections_in_surface_like_categories}}]
	If $G$ is a surface-like pseudolattice and $\sfp$ a point-like element, there is a complex
	\begin{equation*}
		\bbZ \xrightarrow{\sfp} G \xrightarrow{\sfr} \bbZ
	\end{equation*}
	with injective $\sfp$ and, if $G$ is unimodular, surjective $\sfr$. The middle cohomology of the above complex $\NS(G)\coloneqq\sfp^\perp / \sfp$ is a finitely generated free abelian group of rank $\rk(G)-2$.
	
	On $\NS(G)$ the pairing $-\chi$ induces a well-defined non-degenerate symmetric bilinear form $\sfq$, called the \emph{intersection form}, which also will be denoted by the usual product $-\cdot -$.
\end{lemma}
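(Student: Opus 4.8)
The plan is to treat the assertions in order, using only the axioms of a point-like element together with non-degeneracy of $\chi$, and to reserve the real work for the non-degeneracy of the intersection form. First I would record the elementary structural claims. Writing $\sfp$ also for the map $\bbZ \to G$, $1 \mapsto \sfp$, the composite $\sfr \circ \sfp$ sends $1$ to $\sfr(\sfp)=\chi(\sfp,\sfp)=0$ by axiom~(i), so the sequence is a complex; and $\sfp$ is injective because a point-like element is primitive, in particular nonzero. When $G$ is unimodular, $\sfr = \chi(\sfp,-)$ is exactly the image of $\sfp$ under the isomorphism $G \xrightarrow{\sim} \Hom_\bbZ(G,\bbZ)$ induced by $\chi$ (the two natural such maps agree on $\sfp$ by axiom~(ii)); isomorphisms preserve primitivity, so $\sfr$ is a primitive functional, and a primitive functional $G \to \bbZ$ has image all of $\bbZ$, i.e.\ is surjective.

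Next I would compute the middle cohomology, without invoking unimodularity. Non-degeneracy of $\chi$ together with $\sfp \neq 0$ forces $\sfr = \chi(\sfp,-) \neq 0$, so $\sfp^\perp = \ker \sfr$ is a subgroup of $G$ of rank $\rk(G)-1$, free since $G$ is. As $\sfp \in \ker \sfr$ is primitive in $G$, it remains primitive in $\ker \sfr$, whence $\NS(G) = \ker(\sfr)/\bbZ\sfp$ is finitely generated and torsion-free, hence free, of rank $(\rk(G)-1)-1 = \rk(G)-2$. For the intersection form I would first check that $-\chi$ descends: for $v \in \sfp^\perp$ both $\chi(\sfp,v) = \sfr(v) = 0$ and $\chi(v,\sfp) = \sfr(v) = 0$ (using axiom~(ii)), so $-\chi$ vanishes whenever one argument lies in $\bbZ\sfp$ and the other in $\sfp^\perp$; it therefore induces a well-defined form $\sfq$ on $\NS(G)$, symmetric by axiom~(iii).

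The step I expect to be the main obstacle is the non-degeneracy of $\sfq$, which I would establish over $\bbQ$ via a radical computation. Suppose $v \in \sfp^\perp_\bbQ$ lies in the radical of $\chi|_{\sfp^\perp_\bbQ}$, i.e.\ $\chi(v,w)=0$ for all $w \in \sfp^\perp_\bbQ$. Then the functional $\chi(v,-)$ on $G_\bbQ$ vanishes on the hyperplane $\sfp^\perp_\bbQ = \ker(\sfr_\bbQ)$; since the functionals vanishing on this hyperplane form a one-dimensional space spanned by $\sfr = \chi(\sfp,-)$, there is $\lambda \in \bbQ$ with $\chi(v-\lambda\sfp,-)=0$ identically on $G_\bbQ$. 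Non-degeneracy of $\chi$ then gives $v = \lambda\sfp$, so the radical of $\chi|_{\sfp^\perp_\bbQ}$ is exactly $\bbQ\sfp$. Consequently $\sfq$ is non-degenerate on $\NS(G)_\bbQ = \sfp^\perp_\bbQ/\bbQ\sfp$, and hence non-degenerate as a $\bbZ$-valued form on $\NS(G)$. The crux is thus the passage from ``$\chi(v,-)$ kills the hyperplane $\sfp^\perp$'' to ``$v \in \bbQ\sfp$'', which is where the full non-degeneracy of $\chi$ (rather than merely its restriction to $\sfp^\perp$) is used.
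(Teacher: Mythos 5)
Your proof is correct, and since the paper states this lemma as a citation to Kuznetsov's \emph{Exceptional collections in surface-like categories} without reproducing an argument, the right comparison is with the cited source: your route (rank count for $\ker\sfr$ plus primitivity of $\sfp$ for freeness, descent of $-\chi$ via $\chi(\sfp,v)=\chi(v,\sfp)=0$, and the radical computation over $\bbQ$ showing that $\chi(v,-)$ vanishing on the hyperplane $\sfp^\perp_\bbQ=\ker(\sfr_\bbQ)$ forces $v\in\bbQ\sfp$ by non-degeneracy of $\chi$) is essentially the standard argument given there. The one point worth noting is that you only check the left radical of $\chi|_{\sfp^\perp}$, which suffices precisely because axiom~(iii) makes the restricted form symmetric, so it is good that your write-up records that axiom where the form $\sfq$ is constructed.
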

\begin{lemma}[{\cite[Lem.~3.12]{kuznetsov_exceptional_collections_in_surface_like_categories}}]\label{def:canonical_class_k}
	Let $G$ be a surface-like pseudolattice with point-like element $\sfp$ and let $\lambda \colon \bigwedge^2 G \to \sfp^\perp$ be the alternating map sending $v \wedge w \mapsto \sfr(v)w - \sfr(w)v$. Then there is a unique element $K_G \in \NS(G)_\bbQ$, called \emph{canonical class}, satisfying
	\begin{equation*}\label{eq:defining_eq_canonical_class}
		-\sfq(K_G, \lambda(v,w))=\chi(v,w)-\chi(w,v)
	\end{equation*}
	for all $v, w \in G$.
	If $G$ is unimodular, $K_G$ is integral, i.e.~$K_G \in \NS(G)$.
\end{lemma}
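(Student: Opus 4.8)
The plan is to read the right-hand side $\chi(v,w)-\chi(w,v)$ as the antisymmetrization of $\chi$, i.e.\ a fixed alternating bilinear form $a(v,w)\coloneqq\chi(v,w)-\chi(w,v)$ on $G$, and to represent $-a$ through the symmetric form $\sfq$ precomposed with $\lambda$. The whole argument rests on two observations: that $\lambda$ surjects onto $\NS(G)_\bbQ$, and that $a$ vanishes identically on $\sfp^\perp\times\sfp^\perp$ — the latter being exactly the surface-like symmetry of $\chi$ on $\sfp^\perp$. Since $\chi$ is non-degenerate and $\sfp\neq 0$, the rank function $\sfr$ is nonzero, so I would fix an element $\sfh\in G_\bbQ$ with $\sfr(\sfh)=1$ and use the resulting decomposition $v=\sfr(v)\sfh+v_0$ with $v_0\in\sfp^\perp_\bbQ$ throughout.

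For uniqueness I would note that $\lambda(\sfh,u)=\sfr(\sfh)u-\sfr(u)\sfh=u$ for every $u\in\sfp^\perp$, so the image of $\lambda_\bbQ$ is all of $\sfp^\perp_\bbQ$ and hence surjects onto $\NS(G)_\bbQ$. If $K$ and $K'$ both solved the defining equation, then $\sfq(K-K',\lambda(v,w))=0$ for all $v,w$, and this surjectivity together with the non-degeneracy of $\sfq$ on $\NS(G)_\bbQ$ forces $K=K'$.

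For existence I would define the linear functional $\phi\colon\sfp^\perp_\bbQ\to\bbQ$ by $\phi(u)=-a(\sfh,u)$ and first check $\phi(\sfp)=0$, which holds because $\chi(\sfh,\sfp)=\sfr(\sfh)=\chi(\sfp,\sfh)$ by point-likeness; thus $\phi$ descends to $\NS(G)_\bbQ$ and, by non-degeneracy of $\sfq$, is represented by a unique $K_G\in\NS(G)_\bbQ$ with $\sfq(K_G,u)=-a(\sfh,u)$ for all $u\in\sfp^\perp$. It then remains to promote this to the full identity. Writing $v=\sfr(v)\sfh+v_0$ and $w=\sfr(w)\sfh+w_0$ one computes $\lambda(v,w)=\sfr(v)w_0-\sfr(w)v_0\in\sfp^\perp$ and expands $a(v,w)$ bilinearly; the $\sfh$-$\sfh$ term drops by alternation, the mixed terms match $-\sfq(K_G,\lambda(v,w))$ by construction, and one is left with the residual term $a(v_0,w_0)$. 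This vanishes precisely because $\chi$ is symmetric on $\sfp^\perp$, closing the existence argument. I expect this last cancellation to be the conceptual heart of the proof: it is the only place where the surface-like property enters, and it is what makes $\lambda$ the correct universal alternating map.

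Finally, for integrality I would use that unimodularity of $G$ makes $\sfr\colon G\to\bbZ$ surjective (by the preceding lemma), so $\sfh$ can be chosen in $G$, whence $\phi$ is $\bbZ$-valued and descends to an integral functional $\NS(G)\to\bbZ$. To represent it by an \emph{integral} $K_G$ I must know that $(\NS(G),\sfq)$ is itself unimodular. This I would verify by choosing a $\bbZ$-basis $(\sfh,e_1,\dots,e_{n-2},\sfp)$ of $G$, where the $e_i$ lift a basis of $\NS(G)$ (using that $\sfp$ is primitive and $\NS(G)$ free, so $\sfp^\perp=\bbZ\sfp\oplus M$) and computing the Gram matrix of $\chi$: it has block form with a hyperbolic $\sfh$-$\sfp$ corner and the block $-Q$ on the $e_i$, where $Q$ is the Gram matrix of $\sfq$. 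A cofactor expansion along the $\sfp$-row and $\sfp$-column yields $\lvert\det\rvert=\lvert\det Q\rvert$, so $Q$ is unimodular and the integral functional is represented by an integral $K_G$. I regard this determinant computation, together with the integral splitting $\sfp^\perp=\bbZ\sfp\oplus M$, as the main (though routine) obstacle.
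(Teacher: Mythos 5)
Your proof is correct, but note that there is nothing in the paper to compare it against: the paper states this lemma with a citation to Kuznetsov's \emph{Exceptional collections in surface-like categories} (Lem.~3.12) and gives no proof of its own, so your argument has to stand on its own merits --- and it does. All the key steps check out: $\lambda(\sfh,u)=u$ for $u\in\sfp^\perp$ does make $\lambda_\bbQ$ surjective onto $\sfp^\perp_\bbQ$, hence onto $\NS(G)_\bbQ$, which with non-degeneracy of $\sfq$ gives uniqueness; $\phi(\sfp)=0$ follows exactly from the point-like symmetry $\chi(\sfh,\sfp)=\chi(\sfp,\sfh)$, so $\phi$ descends; and in the expansion of $a(v,w)$ with $v=\sfr(v)\sfh+v_0$, $w=\sfr(w)\sfh+w_0$ the residual term $a(v_0,w_0)$ vanishes by the surface-like symmetry of $\chi$ on $\sfp^\perp$, which you rightly identify as the crux --- this is in essence the standard argument (the skew form $\chi(v,w)-\chi(w,v)$ kills $\sfp^\perp\wedge\sfp^\perp$ and therefore factors through $\lambda$). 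One point you use silently but correctly: the defining identity is stated for $v,w\in G$, while your $\sfh$ lies in $G_\bbQ$; since both sides are bilinear, the identity over $G$ is equivalent to the identity over $G_\bbQ$, so plugging in $\sfh$ is legitimate. Your integrality argument is also sound: surjectivity of $\sfr$ (from the preceding lemma) lets you take $\sfh\in G$, the splitting $\sfp^\perp=\bbZ\sfp\oplus M$ exists because $\NS(G)$ is free, and the cofactor expansion of the Gram matrix of $\chi$ in the basis $(\sfh,e_1,\dots,e_{n-2},\sfp)$ --- where the $\sfp$-row is $(1,0,\dots,0)$ and the $\sfp$-column has $\chi(e_i,\sfp)=0$ --- does give $\lvert\det\rvert=\lvert\det Q\rvert$, so $\NS(G)$ inherits unimodularity and the integral functional $u\mapsto -a(\sfh,u)$ is represented by an integral class, which equals $K_G$ by your uniqueness step.
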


The pair $(\NS(G), \sfq)$ is called the \emph{Néron--Severi lattice} and $\NS(G)$ the \emph{Néron--Severi group}. One can check that  for a surface $S$ and pseudolattice $G=\sfK_0^{\mathrm{num}}(S)$ as in \cref{ex:standard_example} all these definitions agree with the usual ones up to sign. For example, via Riemann--Roch \cref{eq:Riemann_Roch} one computes $\chi(\mcO_x, \mcF)=-\rk \mcF$ for any coherent sheaf $\mcF$ on $S$ and $x\in S$ a $k$-valued point.

The following \cref{lem:orth_trans_lift_to_isometries}, which could not be found in the literature, will be important in the proof of \cref{thm:main_result_2}. For that reason, we provide a proof here.
\begin{lemma}[Self-isometries arise from orthogonal transformations]\label{lem:orth_trans_lift_to_isometries}
	Let $G$ be a surface-like pseudolattice of $\rk G\geq 3$ and let $\Aut(G)$ be the group of self-isometries $\phi \colon G \to G$ with $\phi(\sfp)=\sfp$.
	The map $\Psi \colon \Aut(G) \to \Orth(\NS(G))$ obtained by sending $\phi \in \Aut(G)$ to the induced orthogonal transformation of $\NS(G)$ defines a group homomorphism. If $G$ is unimodular, the image of $\Psi$ equals the stabilizer of the canonical class $\Orth(\NS(G))_{K_G}=\{f \in \Orth(\NS(G)) \mid f(K_G)=K_G \}$.
	Moreover if $G=\sfK_0^{\mathrm{num}}(X)$ for some surface $X$ with $\chi(\mcO_X)=1$ as in \cref{ex:standard_example}, the kernel of $\Psi$ can be identified with the subgroup of automorphisms given by twists with line bundles. In other words we obtain a short exact sequence
	\begin{equation*}
		1 \to (\Pic(X)/ \sim_{\mathrm{num}}) \to \Aut (\sfK_0^{\mathrm{num}}(X)) \to \Orth(\Pic(X)/\sim_{\mathrm{num}})_{K_X} \to 1.
	\end{equation*}
\end{lemma}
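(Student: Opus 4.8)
The plan is to prove the three assertions in turn: that $\Psi$ is a well-defined homomorphism, that its image is the stabilizer $\Orth(\NS(G))_{K_G}$ once $G$ is unimodular, and that in the geometric case its kernel consists exactly of the twists by line bundles. For well-definedness, note that any $\phi\in\Aut(G)$ satisfies $\sfr(\phi(v))=\chi(\sfp,\phi(v))=\chi(\phi(\sfp),\phi(v))=\chi(\sfp,v)=\sfr(v)$, using $\phi(\sfp)=\sfp$ and that $\phi$ is an isometry; hence $\phi$ preserves $\sfp^\perp=\ker(\sfr)$ and, together with $\phi(\sfp)=\sfp$, descends to an automorphism of $\NS(G)=\sfp^\perp/\sfp$, which lies in $\Orth(\NS(G))$ because $-\chi|_{\sfp^\perp}$ induces $\sfq$. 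Functoriality of the passage to the quotient makes $\Psi$ a homomorphism. For the inclusion $\Im\Psi\subseteq\Orth(\NS(G))_{K_G}$ (with $G$ unimodular, so $K_G$ is integral) I would use the defining identity of $K_G$: since $\sfr\circ\phi=\sfr$ one has $\lambda(\phi v,\phi w)=\phi(\lambda(v,w))$, and comparing $-\sfq(K_G,\overline{\lambda(\phi v,\phi w)})=\chi(\phi v,\phi w)-\chi(\phi w,\phi v)=\chi(v,w)-\chi(w,v)=-\sfq(K_G,\overline{\lambda(v,w)})$ with $\overline{\lambda(\phi v,\phi w)}=\Psi(\phi)(\overline{\lambda(v,w)})$ gives $\sfq(\Psi(\phi)^{-1}K_G-K_G,\overline{\lambda(v,w)})=0$. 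Choosing $u$ with $\sfr(u)=1$ (possible by surjectivity of $\sfr$) one has $\lambda(u,v)=v$ for all $v\in\sfp^\perp$, so the classes $\overline{\lambda(v,w)}$ span $\NS(G)$, and non-degeneracy of $\sfq$ forces $\Psi(\phi)(K_G)=K_G$.

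The heart of the matter is surjectivity onto the stabilizer. Given $f\in\Orth(\NS(G))_{K_G}$ I would construct a lift as follows. Fix $u$ with $\sfr(u)=1$, so $G=\bbZ u\oplus\sfp^\perp$, and choose a section $s$ of the surjection $\sfp^\perp\twoheadrightarrow\NS(G)$ (available since $\NS(G)$ is free), giving $\sfp^\perp=\bbZ\sfp\oplus s(\NS(G))$. Writing $L(D)\coloneqq\chi(u,s(D))\in\bbZ$, I define $\phi$ by $\phi(u)=u$, $\phi(\sfp)=\sfp$, and $\phi(s(D))=s(fD)+t(D)\,\sfp$ with the linear integral correction $t(D)\coloneqq L(D)-L(fD)$. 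The delicate design choice is to leave $u$ fixed and absorb every correction into the $\sfp$-component of the lift of $f$ on $\sfp^\perp$; this keeps $\phi$ integral and sidesteps any characteristic-vector obstruction that would otherwise appear in the value $\chi(\phi u,\phi u)$. That $\phi\in\Aut(G)$ is clear, since in the chosen basis it is unitriangular with the invertible block $f$ on $\NS(G)$, and $\Psi(\phi)=f$ by construction.

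It then remains to verify that $\phi$ is an isometry, and this is exactly where the hypothesis $f(K_G)=K_G$ enters. On $\sfp^\perp$ the map $\phi$ induces $f$ and fixes $\sfp$, hence preserves $\chi|_{\sfp^\perp}$, which is symmetric with radical $\bbZ\sfp$ and descends to $-\sfq$. For the mixed pairings, $\chi(\phi u,\phi s(D))=L(fD)+t(D)=L(D)=\chi(u,s(D))$ by the definition of $t$, while $\chi(\phi u,\phi u)=\chi(u,u)$ is immediate from $\phi(u)=u$. Finally, using $\chi(s(D),u)=L(D)+K_G\cdot D$ (from the antisymmetric part, as $\lambda(s(D),u)=-s(D)$), one computes $\chi(\phi s(D),\phi u)=L(fD)+K_G\cdot fD+t(D)=L(D)+K_G\cdot fD$, which equals $L(D)+K_G\cdot D$ precisely because $K_G\cdot fD=\sfq(f^{-1}K_G,D)=\sfq(K_G,D)$. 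I expect this isometry verification — and in particular organizing the lift so that it remains integral while matching the antisymmetric part governed by $K_G$ — to be the main obstacle; the clean resolution is the observation that fixing $u$ and twisting only in the $\sfp$-direction removes the parity difficulties.

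For the final statement I specialize to $G=\sfK_0^{\mathrm{num}}(X)$ with $\sfp=[\mcO_x]$. A twist $T_\mcL=(-)\otimes\mcL$ is an autoequivalence of $\sfD^b(X)$, hence an isometry of $\chi$; it fixes $[\mcO_x]$ and acts trivially on $\NS(G)$ (it alters $\cc_1$ by $\rk\cdot\cc_1(\mcL)$, which vanishes on $\sfp^\perp$), so $T_\mcL\in\ker\Psi$, and $\mcL\mapsto T_\mcL$ is a homomorphism factoring through $\Pic(X)/\sim_{\mathrm{num}}$ and injective (if $T_\mcL=\id$ then $[\mcL]=[\mcO_X]$ numerically, so $\cc_1(\mcL)\sim_{\mathrm{num}}0$). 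For surjectivity I take $\phi\in\ker\Psi$; with $u=[\mcO_X]$ it has the form $\phi(s(D))=s(D)+\tau(D)\,\sfp$ and $\phi(u)=u+s(C_0)+d_0\,\sfp$ for some $C_0\in\NS(G)$, linear $\tau$, and $d_0\in\bbZ$, and the isometry relation $\chi(\phi u,\phi s(D))=\chi(u,s(D))$ forces $\tau(D)=C_0\cdot D$, which is exactly the action of the twist by a line bundle with $\cc_1=C_0$; since both $\phi$ and that twist are isometries fixing $u$ up to the same one-dimensional $\sfp$-ambiguity, they agree. Assembling this with the two previous parts yields the short exact sequence, after identifying $\NS(G)$ and $K_G$ with $\Pic(X)/\sim_{\mathrm{num}}$ and $K_X$ up to sign as recorded after \cref{ex:standard_example}.
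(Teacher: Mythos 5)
Your proof is correct and follows essentially the same route as the paper: both decompose $G$ by means of a rank-one vector $u$ (the paper's $v_0$) together with a lift of $\NS(G)$ inside $\sfp^\perp$, construct the lift of $f\in\Orth(\NS(G))_{K_G}$ by adjusting in the $\sfp$-direction so that the pairings against $u$ match --- with the remaining mixed pairing controlled exactly by the identity $\chi(v,w)-\chi(w,v)=-\sfq(K_G,\lambda(v,w))$ and the hypothesis $f(K_G)=K_G$ --- and identify the kernel with line-bundle twists using that a rank-one numerically exceptional class equals $-[\mcO_X(\cc_1(E))]$ and that the lift is uniquely determined by its value on $u$. The only cosmetic difference is the normalization: the paper arranges $\chi(v_i,v_0)=0$ and takes the unique lift of $\bar\phi(v_i)$ orthogonal to $v_0$, whereas you keep an arbitrary section $s$ and absorb the discrepancy into the explicit correction $t(D)=L(D)-L(fD)$; along the way you also write out the inclusion $\Im\Psi\subseteq\Orth(\NS(G))_{K_G}$ and the pinning-down of the $\sfp$-coefficient of $\phi(u)$ by $\chi(\phi u,\phi u)=\chi(u,u)$, details the paper asserts without computation.
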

\begin{proof}
	Since $\sfr(-)=\chi(\sfp, -)$, any $\phi \colon G \to G$ which preserves the point-like element $\sfp$ preserves the rank of elements. Hence it induces an orthogonal transformation of $\NS(G)$ which fixes the canonical class $K_G$.
	If $G$ is unimodular, we can choose a rank $1$ vector $v_0\in G$ and a basis $\langle v_0, v_1, \dots, v_{n-1} \rangle$ such that $\sfp=v_{n-1}$ and $\sfp^\perp = \langle v_1, \dots, v_{n-1}\rangle$. By adding suitable multiples of $\sfp$ to the $v_i$ we can arrange $\chi(v_i, v_0)=0$ for all $1 \leq i \leq n-2$. Now $\chi(v_0, v_i)=-\sfq(K_G,v_i)$ for $1 \leq i \leq n-2$. Thus any $\bar{\phi} \in \Orth(\NS(G))_{K_G}$ can be lifted to an isometry $\phi$ of $G$ preserving $\sfp$ and fixing $v_0$ by choosing $\phi(v_i)$ to be the unique lift of $\bar{\phi}(v_i) \in \sfp^\perp / \sfp$ which satisfies $\chi(\phi(v_i), v_0)=0$.
	The construction of the lift depends on the choice of $v_0$ and for any other choice $v_0'$ with $\chi(v_0, v_0)=\chi(v_0', v_0')$ and $\sfr (v_0')=1$ there exists exactly one lift $\phi$ which maps $v_0$ to $\phi(v_0)=v_0'$.
	
	Assume $G=\sfK_0^{\mathrm{num}}(X)$ for some surface $X$ with $\chi(\mcO_X)=1$ and choose the initial $v_0$ to be the class $-[\mcO_X]$.
	For any numerically exceptional object $E$ of rank $1$, Riemann--Roch \cref{eq:Riemann_Roch} implies that $\cc_2(E)=0$.
	Since 
	$$\ch(E)=\left(\rk E, \cc_1(E), \frac{1}{2}(\cc_1(E)^2-2\cc_2(E))\right),$$
	the condition $\cc_2(E)=0$ implies $[E]=-[\mcO_X(\cc_1(E))]$ in $\sfK_0^{\mathrm{num}}(X)$.
	Now twisting with $\mcO_X(\cc_1(E))$ defines an isometry of $G$ which maps $v_0=-[\mcO_X]$ to $[E]$. Let $v_i=[F]$ for an object $F \in \sfD^b(X)$. Then $\ch(F)=(0, \cc_1(F), d)$, $d \in \bbQ$. Multiplicativity of the Chern character gives $$\ch(F(c_1(E)))=(0, \cc_1(F), d)\cdot (1, \cc_1(E), d')=(0, \cc_1(F), \cc_1(F)\cc_1(E)+d),$$ for some $d' \in \bbQ$. We observe that the first Chern class of $F$ is invariant under twisting with a line bundle and also twisting with a line bundle does not change the point-like element defined by a skyscraper sheaf. Thus, twisting with $\mcO_X(\cc_1(E))$ is the unique automorphism of $G$ which maps $v_0$ to $v_0'=[E]$ and induces the identity on $\NS(G)$.
\end{proof}
\begin{definition}[{\cite[Def.~4.1, Lem.~4.2, Def.~4.3]{kuznetsov_exceptional_collections_in_surface_like_categories}}]
	A surface-like pseudolattice $G$ is called \emph{geometric} if $(\NS(G), \sfq)$ has signature $(1, \rk G -3)$, the canonical class $K_G$ is integral and $K_G$ is \emph{characteristic}, i.e.~$\sfq(D,D) \equiv \sfq(K_G,D) (\mod 2)$ for all $D \in \NS(G)$.	
	A surface-like pseudolattice $G$ is \emph{minimal} if it has no exceptional elements of rank zero. Equivalently $\NS(G)$ does not contain any $(-1)$-class.
\end{definition}
It turns out that such geometric pseudolattices can be classified if we restrict to defect zero pseudolattices. Here the \emph{defect} of $G$ is the integer
\begin{equation*}
	\delta (G)\coloneqq K_G^2 + \rk(G) - 12.
\end{equation*}
If $G$ is obtained as in \cref{ex:standard_example} from a surface $S$ which has an exceptional structure sheaf $\mcO_S$, one can compute $\delta(G)=0$; see \cite[Lem.~5.4]{kuznetsov_exceptional_collections_in_surface_like_categories}. In general the defect can be interpreted as a suitable numerical replacement of the condition $\chi(\mcO_S)=1$.
\begin{theorem}[{\cite[Thm.~3.1]{vial_exceptional_collections_and_the_neron_severi_lattice_for_surfaces}, \cite[Thm.~5.11]{kuznetsov_exceptional_collections_in_surface_like_categories}}]\label{thm:classification}
	Let $G$ be a unimodular geometric pseudolattice of rank $n \geq 3$
	and zero defect such that $G$ represents $1$ by a rank $1$ vector, i.e.~there exists $v\in G$ of rank $1$ such that $\chi(v,v)=1$. Then the following holds:
	\begin{itemize}
		\item $n=3$ and $K_G = -3H$ for some $H \in \NS(G)$ if and only if $G$ is isometric to $\sfK_0^{\mathrm{num}}(\bbP^2)$;
		\item $n=4$, $\NS(G)$ is even and $K_G=-2H$ for some $H \in \NS(G)$ if and only if $G$ is isometric to $\sfK_0^{\mathrm{num}} (\bbP^1 \times \bbP^1)$;
		\item $n\geq 4$, $\NS(G)$ is odd and $K_G$ is primitive if and only if $G$ is isometric to $\sfK_0^{\mathrm{num}}(X_{n-3})$.
	\end{itemize}
	Here $X_{n-3}$ is the blow-up of $\bbP^2$ in $n-3$ points.
	Furthermore, $G$ has an exceptional basis if and only if one of the three possibilities listed above is satisfied.
\end{theorem}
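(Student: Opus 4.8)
The plan is to reduce the statement to the classification of the Néron--Severi lattice together with its canonical class, and then to reconstruct $G$ from this data. I would begin with the three ``if'' directions, which are direct computations: for each of $\bbP^2$, $\bbP^1\times\bbP^1$ and the blow-ups $X_{n-3}$ one reads off $\NS$, its parity and $K_G$ from the standard geometry, checks via Riemann--Roch \cref{eq:Riemann_Roch} that the pseudolattice of \cref{ex:standard_example} is unimodular, geometric and of defect zero, and exhibits an exceptional basis from a known full exceptional collection (Beilinson's on $\bbP^2$, the four line bundles on $\bbP^1\times\bbP^1$, and Orlov's blow-up collection on $X_{n-3}$ \cite{orlov_projective_bundles_monoidal_transformations_and_derived_categories_of_coherent_sheaves}). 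This settles the ``$\Leftarrow$'' of each bullet together with the ``one of three $\Rightarrow$ exceptional basis'' half of the final assertion.

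For the forward directions the key is a reconstruction statement: a unimodular surface-like pseudolattice of defect zero that represents $1$ by a rank $1$ vector is determined up to isometry by the pair $(\NS(G),K_G)$, and an isometry of Néron--Severi lattices carrying $K_G$ to $K_{G'}$ lifts to an isometry $G\to G'$. This is exactly the lifting mechanism in the proof of \cref{lem:orth_trans_lift_to_isometries}: after fixing a rank $1$ vector $v_0$ and the point-like element $\sfp$, the Gram matrix is forced by the symmetric form $\sfq$ on $\NS(G)$ and by the antisymmetrisation relation $-\sfq(K_G,\lambda(v,w))=\chi(v,w)-\chi(w,v)$ of \cref{def:canonical_class_k}. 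The problem thus becomes purely lattice-theoretic: classify unimodular lattices $(\NS(G),\sfq)$ of signature $(1,n-3)$ carrying a characteristic class $K_G$ with $K_G^2=12-n$. Here I would invoke the classification of indefinite unimodular lattices by rank, signature and parity: in the odd case $\NS(G)\cong\langle 1\rangle\oplus\langle -1\rangle^{\,n-3}$, and in the even case $\NS(G)\cong U\oplus E_8(-1)^{k}$ with $n=8k+4$; the definite case $\NS(G)\cong\langle 1\rangle$ occurs only for $n=3$.

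To pin down $K_G$ I would use the numerical blow-down: a class $e$ with $e^2=-1$ and $K_G\cdot e=-1$ exhibits $G$ as a numerical blow-up of a pseudolattice of rank $n-1$, under which $K_G$ remains characteristic and its square increases by one. In the odd case $I_{1,n-3}$ always carries such classes for $n\ge 4$, and since the only minimal odd lattice (one with no $(-1)$-class) is $\langle 1\rangle$, iterating blows $G$ down to the $\bbP^2$-lattice with $K=-3h$; this identifies $G\cong\sfK_0^{\mathrm{num}}(X_{n-3})$ and shows $K_G$ is primitive for $n\ge 4$, while for $n=3$ one has $K_G=\pm3h$ directly, giving $\bbP^2$. (Equivalently, one may use transitivity of $\Orth(\NS(G))$ on primitive characteristic vectors of the given square, by Eichler's theorem when $\rk\NS(G)\ge 3$ and by inspection in the remaining rank-$2$ case.) In the even case $K_G$ is characteristic in an even lattice, hence $K_G=2H$; then $K_G^2=12-n$ forces $H^2=2(1-k)$, and the bullet's hypothesis $n=4$ selects $k=0$, $\NS(G)\cong U$ and $H^2=2$, which reconstructs to $\bbP^1\times\bbP^1$.

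The remaining point, and the main obstacle, is the ``$\Rightarrow$'' of the final assertion: the even lattices $U\oplus E_8(-1)^{k}$ with $k\ge 1$ (so $n=8k+4\ge 12$), and likewise odd lattices with non-primitive $K_G$, satisfy all standing hypotheses but lie in none of the three cases, so they must be shown to carry no exceptional basis. I would reduce this to the numerical analogue of the del Pezzo classification: a \emph{minimal} unimodular geometric pseudolattice of defect zero carrying an exceptional basis is isometric to $\sfK_0^{\mathrm{num}}(\bbP^2)$ or $\sfK_0^{\mathrm{num}}(\bbP^1\times\bbP^1)$, which is the lattice-theoretic heart carried out in \cite{vial_exceptional_collections_and_the_neron_severi_lattice_for_surfaces, kuznetsov_exceptional_collections_in_surface_like_categories}. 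Granting this, an exceptional basis can be carried along the numerical blow-down (a $(-1)$-class provides a rank $0$ exceptional element which, after mutation, splits off), so any $G$ with an exceptional basis blows down to a minimal one with an exceptional basis, hence to $\bbP^2$ or $\bbP^1\times\bbP^1$, and is therefore one of the three surfaces. Since $U\oplus E_8(-1)^{k}$ is even and thus already minimal (it has no $(-1)$-class) but is not isometric to $U$ when $k\ge 1$, it admits no exceptional basis; the hard part is precisely establishing the minimal classification, where one must analyse an exceptional basis directly and rule out the exotic even models of rank $\ge 12$.
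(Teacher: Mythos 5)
The first thing to say is that the paper contains no proof of \cref{thm:classification} to compare you against: it is imported verbatim from \cite{vial_exceptional_collections_and_the_neron_severi_lattice_for_surfaces} and \cite{kuznetsov_exceptional_collections_in_surface_like_categories}. Measured against those sources, your sketch reconstructs their actual strategy quite faithfully: the reconstruction of $G$ up to isometry from $(\NS(G),K_G)$ together with the parity pinned down by a rank~$1$ vector representing $1$ is exactly the lifting mechanism of \cref{lem:orth_trans_lift_to_isometries} (and is Kuznetsov's device); the reduction of the final assertion to the classification of \emph{minimal} models is done via mutation to norm-minimal bases and contraction of rank-zero exceptional elements, i.e.\ Perling's algorithm (\cref{thm:perlings_algorithm}); and the identification of the pairs $(\NS(G),K_G)$ is the lattice-theoretic half carried out by Vial. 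Your exclusion of the even models $U\oplus E_8(-1)^k$, $k\ge 1$ (minimal because an even lattice has no $(-1)$-class, hence excluded by the minimal classification) is the right argument. Be aware, though, that your gloss ``a $(-1)$-class provides a rank~$0$ exceptional element which, after mutation, splits off'' compresses the real engine: getting an exceptional basis into a form where a rank-zero exceptional element can be split off \emph{is} Perling's norm-minimization, one of the two hard inputs you defer (the other being the rank-$3$/rank-$4$ minimal classification), and these are deferred to the same references the paper itself cites.

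One sub-claim is wrong as stated. In the odd case you assert that $I_{1,n-3}=\langle 1\rangle\oplus\langle -1\rangle^{n-3}$ ``always carries'' a class $e$ with $e^2=-1$ and $K_G\cdot e=-1$ for $n\ge 4$, and that the blow-down induction therefore ``shows $K_G$ is primitive''. If $K_G$ is divisible by $d$, then $K_G\cdot e\in d\bbZ$ for every $e$, so no such class exists; and non-primitive characteristic vectors of the required square do occur under the standing hypotheses --- for instance $3\,(3,1,\dots,1)\in I_{1,9}$ is characteristic of square $0=12-n$ for $n=12$. So primitivity of $K_G$ is an honest hypothesis of the third bullet, not a consequence of the induction; indeed your own final paragraph, which treats odd lattices with non-primitive $K_G$ as cases that must be shown to carry no exceptional basis, contradicts the earlier passage. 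Your parenthetical alternative --- transitivity of $\Orth(\NS(G))$ on \emph{primitive} characteristic vectors of fixed square --- is the correct repair, except that for odd indefinite unimodular lattices of rank $\ge 3$ this is Wall's theorem rather than Eichler's criterion (which concerns even lattices); the rank-$2$ case $n=4$ is, as you say, an inspection. With that substitution, and with the defect bookkeeping of \cref{lem:formula_defect_contraction} and \cref{lem:formula_degree_blow_up} forcing $\sfq(K_G,e)=\pm 1$ at every contraction step of a defect-zero tower (whence $\NS(G)$ is odd and $K_G=K'+f$ is primitive as soon as one blow-down exists), your outline becomes a correct skeleton of the cited proof.
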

\begin{remark}\label{rmk:defect_zero_for_lattices_from_surfaces}
	In fact, by \cite[Thm.~3.1]{vial_exceptional_collections_and_the_neron_severi_lattice_for_surfaces} the condition $\delta(G)=0$ is a necessary condition for admitting an exceptional basis if the pseudolattice results from taking the numerical Grothendieck group of a smooth projective surface with $\chi(\mcO_S)=1$.
\end{remark}

Let $G$ be a surface-like pseudolattice with Serre operator $S$. Let $v \in G$, then 
\begin{equation*}
	\chi(\sfp, (S-1)(v)) = \chi(v, \sfp) - \chi(\sfp, v)=0.
\end{equation*}
Furthermore, \cite[Lem.~3.14]{kuznetsov_exceptional_collections_in_surface_like_categories} shows that $S-1$ maps $\sfp^\perp$ to $\bbZ \sfp$. Thus, we obtain a decreasing filtration
\begin{equation*}
	F^3G=0 \subseteq F^2 G= \bbZ \sfp \subseteq F^1 G = \sfp^\perp \subseteq F^0 G = G
\end{equation*}
such that $S-1$ maps $F^i G$ to $F^{i+1} G$. If $G$ is unimodular, the rank map induces an isomorphism $\sfr \colon G / \sfp^\perp \to \bbZ$, thus the above filtration defines a so-called \emph{codimension filtration}.
\begin{definition}[{\cite[Def.~5.1.1]{de_thanhoffer_de_volcsey_van_den_berg_on_an_analogue_of_the_markov_equation_for_exceptional_collections_of_length_4}}]
	Let $G$ be a pseudolattice with Serre operator $S$ and let $V\coloneqq G_\bbQ$. A \emph{codimension filtration} on $V$ is a filtration
	\begin{equation*}
		0=F^3V \subseteq F^2 V \subseteq F^1 V \subseteq F^0 V = V
	\end{equation*}
	such that $(S-1)(F^iV) \subseteq F^{i+1}V$, $\dim F^0 V / F^1 V = \dim F^2 = 1$ and $\chi(F^1 V, F^2V)=0$.
\end{definition}
Conversely, any codimension filtration gives rise to a point-like element by choosing a generator of $F^2 G=F^2V \cap G$. This yields a 1:1-correspondence
\begin{equation*}
	\{\text{codimension filtrations} \; F^\bullet \; \text{on} \; G\} \leftrightarrow \{\text{point-like elements} \; \sfp\} /\{\pm 1\}.
\end{equation*}
We will refer to both of them, a point-like element and a codimension filtration, as a \emph{surface-like structure} on the pseudolattice $G$. In \cref{ex:standard_example} the codimension filtration coincides with the topological codimension filtration, as discussed in \cite[Ex.~3.5]{kuznetsov_exceptional_collections_in_surface_like_categories}.

\subsection{Mutations}\label{sec:mutations}
Given $e \in G$ we define the \emph{left mutation} $\sfL_e$ and its \emph{right mutation} $\sfR_e$ as
\begin{align*}
	\sfL_e(v)&\coloneqq v-\chi(e,v)e\\
	\sfR_e(v)&\coloneqq v-\chi(v,e)e
\end{align*}
for all $v\in G$.
Given an exceptional basis $e_\bullet =(e_1, \cdots,e_n)$ of $G$ we define
\begin{align*}
	\sfL_{i,i+1}(e_\bullet) &\coloneqq (e_1, \cdots, e_{i-1}, \sfL_{e_i}(e_{i+1}), e_i, e_{i+2}, \cdots, e_n), \\
	\sfR_{i,i+1}(e_\bullet) &\coloneqq (e_1, \cdots, e_{i-1}, e_{i+1}, \sfR_{e_{i+1}}( e_i), e_{i+2}, \cdots, e_n).
\end{align*}
The sequences are again exceptional bases and the above operations are mutually inverse.
By construction, these mutations match the known mutations of exceptional collections if $G=\sfK_0^{\mathrm{num}}(S)$ as in \cref{ex:standard_example}. Indeed, if $S$ is a surface and $E \in \sfD^b(S)$ an exceptional object, the \emph{left mutation} $\sfL_E$ and \emph{right mutation} $\sfR_E$ are defined as
\begin{align*}
	\sfL_E(F)&\coloneqq \mathrm{Cone}\left(E \otimes \RHom(E,F)\xrightarrow{\mathrm{ev}} F\right) \; \text{and}  \\ \sfR_E(F)&\coloneqq \mathrm{Cone}\left(F \xrightarrow{\mathrm{ev}^\vee} E \otimes \RHom(F,E)^\vee\right)[-1]
\end{align*}
for any object $F \in \sfD^b(S)$. Note that by construction the diagram
\[
\begin{tikzcd}
	\sfD^b(S) \rar \dar["\sfM_{E}"]  & \sfK_0^{\mathrm{num}}(S)\dar["\sfM_{[E]}"] \\
	\sfD^b(S) \rar & \sfK_0^{\mathrm{num}}(S) 
\end{tikzcd}
\]
commutes, where $\sfM_{E}=\sfL_E$ or $\sfM_{E}=\sfR_E$.

Moreover, if $\sfD^b(S)=\langle E_1, \dots, E_n \rangle=\langle E_\bullet \rangle$ is a full exceptional collection, the sequences
\begin{align*}
	\sfL_{i,i+1}(E_\bullet) &\coloneqq (E_1, \cdots, E_{i-1}, \sfL_{E_i}(E_{i+1}), E_i, E_{i+2}, \cdots, E_n), \\
	\sfR_{i,i+1}(E_\bullet) &\coloneqq (E_1, \cdots, E_{i-1}, E_{i+1}, \sfR_{E_{i+1}}( E_i), E_{i+2}, \cdots, E_n).
\end{align*}
are again full exceptional collections.
Already on the level of $\sfD^b(S)$ the operations $\sfL_{i,i+1}$ and $\sfR_{i,i+1}$ give rise to an action of the braid group $\mathfrak{B}_n$, see, e.g., \cite[Prop.~2.1]{bondal_polishchuk_homological_properties_of_associative_algebras_the_method_of_helices}. Together with $\bbZ^n$ acting by shifts, this yields an action of the semidirect product $\bbZ^n \rtimes \mathfrak{B}_n$ on the set of full exceptional collections, where the homomorphism $\mathfrak{B}_n \to \Aut(\bbZ^n)$ is the composition of the canonical map $\mathfrak{B}_n \to \mathfrak{S}_n$ and the action of $\mathfrak{S}_n$ on $\bbZ^n$ by permutations.
If two exceptional bases lie in the same orbit of the $\bbZ^n \rtimes \mathfrak{B}_n$-action, we say the exceptional collections are \emph{related by mutations up to shifts}.

On the level of $\sfK_0^{\mathrm{num}}(S)$ shifts result in sign changes. More generally, if $G$ is a pseudolattice of rank $n$ with exceptional basis, then $\{\pm 1\}^n \rtimes \mathfrak{B}_n$ acts on the set of exceptional bases, where $\{\pm 1\}^n$ acts by changing signs of basis elements. Moreover, this action commutes with the action of isometries $\phi \colon G \to G$.
If two exceptional bases lie in the same orbit of $\{\pm 1\}^n \rtimes \mathfrak{B}_n$, we say the exceptional bases are \emph{related by mutations up to signs}.
In this paper, we will only consider pseudolattices with surface-like structure. If we write that two exceptional bases $e_\bullet, f_\bullet$ are \emph{related by mutations up to signs and isometry} we always mean that there exists an isometry $\phi \colon G \to G$ which preserves the point-like element $\phi(\sfp)=\sfp$ and $\phi(e_\bullet)$ and $f_\bullet$ are related by mutations up to signs.

Let $G$ be a surface-like pseudolattice with exceptional basis. We will frequently mutate to norm-minimal bases, where the \emph{norm} of an exceptional basis $e_\bullet=(e_1, \dots, e_n)$ is the number $\sum_i \sfr(e_i)^2$. We say an exceptional basis is \emph{norm-minimal} if there is no exceptional basis related by mutations and sign changes with smaller norm.
Recall that due to the work of Perling, norm-minimal exceptional bases can be understood via \emph{Perling's algorithm}:
\begin{theorem}[{\cite[Thm.~5.8]{kuznetsov_exceptional_collections_in_surface_like_categories}}, cf.~{\cite[Cor.~9.12, Cor.~10.7]{perling_combinatorial_aspects_of_exceptional_sequences_on_rationl_surfaces}}]\label{thm:perlings_algorithm}
	Let $G$ be a geometric surface-like pseudolattice. Any exceptional basis in $G$ can be transformed by mutations and sign changes into a norm-minimal exceptional basis consisting of $3$ or $4$ elements of rank $1$ and all other elements of rank $0$.
\end{theorem}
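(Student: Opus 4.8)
Since the norm takes values in $\bbZ_{\ge 0}$, every exceptional basis can be transformed by mutations and sign changes into one of minimal norm in its orbit; the substance of the statement is therefore the rank profile of a norm-minimal basis, and it suffices to show that \emph{every} norm-minimal exceptional basis has the asserted form. Fix such a basis $e_\bullet=(e_1,\dots,e_n)$ and put $r_i\coloneqq \sfr(e_i)$. Replacing $e_i$ by $-e_i$ preserves both the exceptional-basis property and the norm while changing the sign of $r_i$, so after sign changes I may assume $r_i\ge 0$ for all $i$, and the basis stays norm-minimal. The goal is then to prove that $r_i\in\{0,1\}$ for every $i$ and that precisely $3$ or $4$ of the $r_i$ equal $1$.

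The first and central step is to pin down the admissible ranks. A direct computation shows that, with $c\coloneqq\chi(e_i,e_{i+1})$, the mutation $\sfL_{i,i+1}$ changes the norm by $c\,r_i(c\,r_i-2r_{i+1})$ and $\sfR_{i,i+1}$ by $c\,r_{i+1}(c\,r_{i+1}-2r_i)$, so norm-minimality forces both of these to be non-negative for every $i$. If $r_i=0$ or $r_{i+1}=0$ the condition is vacuous, so a rank-$0$ element can be transported to either side through the collection without changing the norm; if $r_i,r_{i+1}>0$ it restricts the admissible values of $c$. Following Perling's algorithm, I would use these moves to run a reduction reminiscent of the Euclidean algorithm on the sequence of ranks: as long as some $r_i\ge 2$ survives, one produces a sequence of mutations (sweeping the rank-$0$ elements aside and mutating the remaining positive ranks) that strictly lowers $\sum_i r_i^2$. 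Since the norm is a non-negative integer this descent terminates, and the terminal configurations are exactly those in which all ranks lie in $\{0,1\}$.

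It remains to count the rank-$1$ elements, which I would do by induction on $n=\rk G$ using the numerical blow-down operation of \cref{se:basics}. Once all ranks lie in $\{0,1\}$, the norm equals the number $m$ of rank-$1$ elements. If there are no rank-$0$ elements, then norm-minimality forces $G$ to be minimal: a $(-1)$-class in $\NS(G)$ would make $G$ a numerical blow-up of a geometric pseudolattice of rank $n-1$, and lifting a norm-minimal basis of the latter (of norm $3$ or $4$ by the inductive hypothesis) and appending a rank-$0$ element would produce an exceptional basis of $G$ of norm $<n$, a contradiction. As the rank-$1$ basis elements witness that $G$ represents $1$ by a rank-$1$ vector, \cref{thm:classification} then gives $G\cong\sfK_0^{\mathrm{num}}(\bbP^2)$ or $\sfK_0^{\mathrm{num}}(\bbP^1\times\bbP^1)$, so $m=n\in\{3,4\}$. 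If instead some $e_i$ has rank $0$, its image in $\NS(G)$ is a $(-1)$-class, since $\sfq(\bar e_i,\bar e_i)=-\chi(e_i,e_i)=-1$; blowing it down yields a geometric pseudolattice of rank $n-1$ carrying a norm-minimal exceptional basis with the same number $m$ of rank-$1$ elements, and the inductive hypothesis gives $m\in\{3,4\}$.

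The main obstacle is the combinatorial descent of the second paragraph. The adjacent-mutation inequalities are by themselves too weak to bound the ranks — for instance they impose no constraint when $c\le 0$ between two positive ranks — so the reduction cannot be purely local: one must control how an element of rank $\ge 2$ can be moved through several neighbours and cancelled, and the proof that some such mutation sequence always lowers the norm and halts at an all-$\{0,1\}$ configuration is precisely the content of Perling's algorithm. That this terminates, rather than cycling among non-minimal configurations, is what ultimately rests on the hyperbolic signature $(1,\rk G-3)$ of $\NS(G)$.
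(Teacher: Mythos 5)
There is a structural point to make first: the paper does not prove this statement at all --- it is imported verbatim from Kuznetsov \cite[Thm.~5.8]{kuznetsov_exceptional_collections_in_surface_like_categories} and Perling \cite[Cor.~9.12, Cor.~10.7]{perling_combinatorial_aspects_of_exceptional_sequences_on_rationl_surfaces}, so your attempt has to stand on its own. It does not, and you say so yourself: the heart of the theorem is that a norm-minimal exceptional basis has all ranks in $\{0,\pm 1\}$ with exactly $3$ or $4$ nonzero, and your second paragraph only describes a hoped-for descent. Your local computation is correct --- $\sfL_{i,i+1}$ changes the norm by $c\,r_i(c\,r_i-2r_{i+1})$ with $c=\chi(e_i,e_{i+1})$, norm-minimality forces these quantities to be non-negative, and rank-zero elements can be transported norm-neutrally --- but, as you concede, these adjacent inequalities impose no constraint when $c\le 0$, and the claim that ``some such mutation sequence always lowers the norm'' when a rank $\ge 2$ survives is exactly the content of the cited results. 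Deferring to ``Perling's algorithm'' at that point is circular: that algorithm, and the quantitative use of the hyperbolic signature of $\NS(G)$, is precisely what a proof would have to supply, and nothing in your sketch substitutes for it.

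The counting step in your third paragraph also has concrete defects, concentrated in the no-rank-zero case. First, the paper defines \emph{norm-minimal} orbit-wise (\cref{sec:mutations}): no basis \emph{related by mutations and sign changes} has smaller norm. Your contradiction constructs a smaller-norm exceptional basis by lifting one from the contraction $G_e$ of a lift $e$ of an arbitrary $(-1)$-class, but you give no argument that this new basis lies in the same mutation orbit --- establishing that is essentially the transitivity problem the whole paper is about, so no contradiction with norm-minimality results. Second, you tacitly assume $G_e$ admits an exceptional basis at all; this is clear when $e$ is a member of the given basis (the situation in the paper's proof of \cref{thm:main_result}), but not for a lift of an arbitrary $(-1)$-class in $\NS(G)$. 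Third, your appeal to \cref{thm:classification} ignores its zero-defect hypothesis, which is not part of the data of a geometric pseudolattice and must be supplied (the paper does this via Kuznetsov's \cite[Cor.~5.6]{kuznetsov_exceptional_collections_in_surface_like_categories}, valid for \emph{minimal} geometric pseudolattices with exceptional basis --- so your minimality claim is needed before, not after, this step, compounding the first two gaps). By contrast, the inductive step when some basis element has rank zero is sound: its image in $\NS(G)$ is a $(-1)$-class, the restricted basis is norm-minimal in $G_e$ because mutations in the contraction lift to mutations of $G$ fixing $e$, and unimodularity (needed for blow-up and blow-down to be mutually inverse) is automatic from the unitriangular Gram matrix. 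So the skeleton of the induction is fine, but both the descent to ranks in $\{0,1\}$ and the terminal count are missing or broken.
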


\subsection{Blow-up and blow-down}\label{se:blow_up_down}
We recall the classical blow-up and blow-down construction for surface-like pseudolattices and give a detailed discussion of \cite[\S\,5]{de_thanhoffer_de_volcsey_van_den_berg_on_an_analogue_of_the_markov_equation_for_exceptional_collections_of_length_4} as we make use of these observations in \cref{se:proof_of_main_result}.
Let $G$ be a unimodular surface-like pseudolattice with point-like element $\sfp$. We denote the induced codimension filtration by $F^\bullet G$. Let $e_\bullet=(e_1, \dots, e_n)$ be a basis of $G$ and let $M$ be the Gram matrix of the pairing $\chi$ with respect to this basis. Choosing an element $z\in F^2G = \bbZ \sfp$, we construct the \emph{numerical blow-up} of $G$ at $z$ as follows: We extend the lattice $G$ by adding a formal element $f$, i.e.~we consider the free abelian group $\Bl_z G\coloneqq \bbZ f \oplus G$. The pairing $\chi_{\mathrm{new}}$ on $\Bl_z G$ is defined via
\begin{align*}
	\chi_{\mathrm{new}} \vert_{G\otimes G} &\coloneqq  \chi, \\
	\chi_{\mathrm{new}} (g,f) &\coloneqq 0 \; \text{for all} \; g \in G,\\
	\chi_{\mathrm{new}} (f,f) &\coloneqq 1,  \\
	\text{and } \chi_{\mathrm{new}} (f,g) &\coloneqq \chi(z,g) \; \text{for all} \; g \in G.
\end{align*}
In abuse of notation we write $\chi$ also for the pairing on $\Bl_z G$.
As outlined below, this definition matches the geometric situation of a blow-up.
The Gram matrix with respect to the basis $(f, e_1, \dots, e_n)$ is of the form
\begin{equation*}\label{eq:formula_gram_matrix_blow_up}
\left(\begin{array}{@{}c|c@{}}
	1
	& \begin{matrix}
		\chi (z, e_1) & \cdots & \chi (z, e_n)
	\end{matrix} \\
	\hline
	\begin{matrix}
		0 \\
		\vdots \\
		0
	\end{matrix} & M
\end{array}\right).
\end{equation*}
Note that $\Bl_z G$ is again unimodular and surface-like with point-like element $\sfp \in G \subseteq \bbZ f \oplus G$. The latter follows from writing $z=n\sfp$ for some $n \in \bbZ$ which shows $\chi(\sfp, f)=0=\chi(z, \sfp)=\chi(f,\sfp)$.
The orthogonal complement of $\sfp$ in $\Bl_z G$ is $F^1 \Bl_z G = F^1 G \oplus \bbZ f$ and $\chi$ is symmetric on $F^1 G \oplus \bbZ f$ as it is symmetric on both summands and $\chi(F^1 G, f)= 0= n \chi( \sfp, F^1G)=\chi(f, F^1 G)$. In particular, $F^2 \Bl_z G = \bbZ \sfp = F^2 G$. Therefore the point-like element $\sfp$ does not change under blow-up; this allows us to blow up the same element multiple times.
Note that the image of $f$ in 
\begin{equation*}
	\NS(\Bl_z G)=\NS(G)\overset{\perp}{\oplus} \bbZ f
\end{equation*}
defines an element of self-intersection $-1$. It is the analogue of a $(-1)$-curve and can be blown down, but in contrast to the geometric setting, we cannot detect whether a divisor of self-intersection $-1$ is an actual curve or not.

Again we compare the construction to the geometric one (cf.~\Cref{ex:standard_example}).
Let $S$ be a smooth projective surface and let $\tilde{S}$ be the blow-up at a point $p\in S$ with exceptional divisor $E$:
\[
\begin{tikzcd}
	\tilde{S} \rar["\pi"] & S \\
	E \rar["\psi"] \uar[hook, "j"] & \{p\}.\uar[hook, "i"]
\end{tikzcd}
\]
For $\mcF \in \sfD^b(S)$ a Riemann--Roch computation shows $$\chi_{\tilde{S}}(j_* \mcO_E(-1), \pi^* \mcF)=-\rk(\mcF)=\chi_S(\mcO_p, \mcF),$$ see \cite[Ex.~4.1]{perling_combinatorial_aspects_of_exceptional_sequences_on_rationl_surfaces}.
Finally, Orlov's blow-up formula yields a semi-orthogonal decomposition
\begin{equation*}
	\sfD^b(\tilde{S})= \langle j_*(\psi^* \sfD^b(\{p\}) \otimes \mcO_E(-1) ), \mathrm{L} \pi^* \sfD^b(S) \rangle = \langle j_*\mcO_E(-1), \mathrm{L} \pi^* \sfD^b(S) \rangle
\end{equation*}
which coincides with the numerical blow-up construction.

The inverse operation on a unimodular surface-like pseudolattice $G$ is the \emph{blow-down} or \emph{contraction}. Let $f \in G$ be a rank zero vector such that $\sfq(f,f)=-\chi(f,f)=-1$. Then the contraction of $f$ is the lattice $G_f\coloneqq  {^\perp f} =\{v\in G \mid \chi(v,f)=0\}\subseteq G$ with pairing $\chi \vert_{^\perp f \otimes {^\perp f}}$. The pseudolattice $G_f$ is again surface-like with point-like element $\sfp$ and unimodular; see \cite[Lem.~5.1]{kuznetsov_exceptional_collections_in_surface_like_categories}. If $G$ is geometric, so is $G_f$.
In the following we prove a slightly modified version of \cite[Lem.~5.1]{de_thanhoffer_de_volcsey_van_den_berg_on_an_analogue_of_the_markov_equation_for_exceptional_collections_of_length_4}, which will be a key tool towards establishing \cref{thm:main_result}.
\begin{proposition}
	Let $G$ be a unimodular surface-like pseudolattice and $f \in G$ a rank zero vector of self-intersection $-1$. Denote by $S$ the Serre operator of $G$, then $z\coloneqq (S-1)(f) \in F^2 G_f$ defines an element such that $\Bl_z G_f = G$.
\end{proposition}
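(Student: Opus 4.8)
The plan is to produce the tautological identification of $G$ with $\Bl_z G_f$ and to verify that it is an isometry compatible with the surface-like structure. The hypothesis $\sfq(f,f)=-1$ translates to $\chi(f,f)=1$, so the functional $\chi(-,f)\colon G\to\bbZ$ is surjective (it sends $f$ to $1$) and every $v\in G$ decomposes uniquely as $v=\chi(v,f)\,f+\bigl(v-\chi(v,f)\,f\bigr)$ with $v-\chi(v,f)\,f\in{}^\perp f=G_f$. Thus $G=\bbZ f\oplus G_f$ as abelian groups, and the $\bbZ$-linear map $\Bl_z G_f=\bbZ f'\oplus G_f\to G$ sending the new generator $f'$ to $f$ and restricting to the identity on $G_f$ is an isomorphism of free abelian groups. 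Comparing Gram matrices, three of the four blocks match immediately: the form on $G_f\otimes G_f$ is by definition the restriction of $\chi$; one has $\chi(f',f')=1=\chi(f,f)$; and $\chi(g,f')=0=\chi(g,f)$ for every $g\in G_f$, the last because $G_f={}^\perp f$. Everything therefore reduces to the single identity $\chi(f,g)=\chi(z,g)$ for all $g\in G_f$.

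For this I would first record that $z\coloneqq(S-1)(f)$ lies in $F^2G=\bbZ\sfp$: indeed $\sfr(f)=\chi(\sfp,f)=0$ places $f$ in $F^1G=\sfp^\perp$, and $S-1$ maps $F^1G$ into $F^2G=\bbZ\sfp$. In particular $z\in\bbZ\sfp\subseteq G_f$, which already yields the auxiliary claim $z\in F^2 G_f$ and legitimizes $z$ as a centre for the numerical blow-up. The crucial pairing identity then follows from the defining property of the Serre operator rewritten as
\[
\chi\bigl(w,(S-1)(v)\bigr)=\chi(v,w)-\chi(w,v),
\]
which is immediate from $\chi(v,w)=\chi(w,S(v))$. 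Taking $v=f$ and $w=g\in G_f$ and using $\chi(g,f)=0$ gives $\chi(g,z)=\chi(f,g)$; and since $z$ is a multiple of $\sfp$, the point-like symmetry $\chi(\sfp,-)=\chi(-,\sfp)$ turns $\chi(g,z)$ into $\chi(z,g)$, so $\chi(z,g)=\chi(f,g)$ as needed. Once all four blocks agree, the identification is an isometry; as both pseudolattices carry the same point-like element $\sfp$, their surface-like structures coincide and $\Bl_z G_f=G$.

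The main obstacle is purely in the bookkeeping of the left--right asymmetry of $\chi$: one must use consistently that $G_f$ is the left-orthogonal ${}^\perp f$, so that $\chi(g,f)=0$ while $\chi(f,g)$ genuinely carries information, and one must exploit the membership $z\in\bbZ\sfp$ precisely at the point where this asymmetry would otherwise prevent passing from $\chi(g,z)$ to $\chi(z,g)$. No deeper input is required; with the identity $\chi(w,(S-1)(v))=\chi(v,w)-\chi(w,v)$ established, the remaining verifications are formal.
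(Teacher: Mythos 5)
Your proof is correct and follows essentially the same route as the paper's: both reduce everything to the single identity $\chi(f,g)=\chi(z,g)$ for $g\in G_f$ and establish it from the Serre relation $\chi(v,w)=\chi(w,S(v))$ together with $\chi(g,f)=0$ and the symmetry $\chi(\sfp,-)=\chi(-,\sfp)$ applied to $z\in\bbZ\sfp$, which is exactly the paper's chain $\chi(g,v)=\chi(z,v)=\chi(v,z)=\chi(v,S(f))-\chi(v,f)=\chi(f,v)$ read in reverse. Your only addition is to make explicit the decomposition $G=\bbZ f\oplus G_f$ underlying the identification, which the paper leaves implicit.
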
 
\begin{proof}
	Since $f\in F^1 G = \sfp ^\perp$, we know that $(S-1)(f) \in F^2G = \bbZ \sfp$ is a multiple of $\sfp$ and lies in $G_f$. Let $H\coloneqq  \Bl_z (G_f)=\bbZ g \oplus G_f$ be the blow-up of $G_f$ at $z$. Then the pairing on $H$ extends the pairing of $G_f$ with the property that $g$ is an element of rank zero and of self-intersection $-1$. Consider the morphism $G \to H$ sending $f \mapsto g$ and $v\mapsto v$ for all $v\in G_f$. We verify that this is an isometry:
 	Obviously $\chi(g,g)=1=\chi(f,f)$ and $\chi(v, f)=0=\chi(v,g)$ for $v\in G_f$. Let $v \in G_f$, then
 	\begin{equation*}
 		\chi(g,v)=\chi(z,v)=\chi(v,z)=\chi(v,S(f))-\chi(v,f)=\chi(f,v),
 	\end{equation*}
 	where we have used that $z$ is a multiple of $\sfp$ and $\chi(v,f)=0$ for all $v\in G_f$.
\end{proof}
Clearly $(\Bl_z G)_f =G$, thus blow-up and blow-down are mutually inverse.

\begin{remark}
	Comparing the blow-down construction described above to the construction in \cite[\S\,5]{kuznetsov_exceptional_collections_in_surface_like_categories}, one observes that the contraction of an exceptional element of rank zero can also be defined as the right orthogonal $f^\perp$.
\end{remark}

We end this section by recalling formulae for the defect of the contraction.
\begin{lemma}[{\cite[Lem.~5.7]{kuznetsov_exceptional_collections_in_surface_like_categories}}]\label{lem:formula_defect_contraction}
	Let $G$ be a surface-like pseudolattice and $e\in G$ an exceptional element of rank zero. Then the defect of $G$ equals
	\begin{equation*}
		\delta (G) = \delta (G_e) + (1-\sfq(K_G, e)^2).
	\end{equation*}
	If $G$ is geometric, then $\delta (G) \leq \delta (G_e)$ with equality if and only if $\sfq(K_G, e)=\pm 1$.
\end{lemma}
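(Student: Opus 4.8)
The plan is to compute both sides directly from the definition $\delta(G)=K_G^2+\rk(G)-12$, tracking how the rank and the self-intersection of the canonical class change under the contraction $G_e={}^\perp e$. Since $e$ is exceptional of rank zero we have $e\in\sfp^\perp$, and its image $\bar e\in\NS(G)$ satisfies $\sfq(\bar e,\bar e)=-\chi(e,e)=-1$; moreover the functional $\chi(-,e)$ is nonzero (as $\chi(e,e)=1$), so $\rk(G_e)=\rk(G)-1$ and hence $\rk\NS(G_e)=\rk\NS(G)-1$. Working rationally, I would first identify $\NS(G_e)_\bbQ$ with the orthogonal complement $\bar e^\perp\subseteq\NS(G)_\bbQ$: indeed $\sfp\in G_e$, and for $v\in\sfp^\perp$ the symmetry of $\chi$ on $\sfp^\perp$ turns the condition $\chi(v,e)=0$ into $\sfq(\bar v,\bar e)=0$; since $\sfq(\bar e,\bar e)=-1\neq0$ this gives an orthogonal decomposition $\NS(G)_\bbQ=\NS(G_e)_\bbQ\overset{\perp}{\oplus}\bbQ\bar e$.

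The crux is to relate the two canonical classes. Write $K_G=K'+a\bar e$ with $K'\in\NS(G_e)_\bbQ$ and $a\in\bbQ$; pairing with $\bar e$ gives $a=-\sfq(K_G,\bar e)=-\sfq(K_G,e)$. I claim $K'=K_{G_e}$. To see this I would use the characterization of \cref{def:canonical_class_k}: for $v,w\in G_e$ both $\lambda(v,w)$ and the value $\chi(v,w)-\chi(w,v)$ are computed inside $G_e$, and since $\bar e\perp\NS(G_e)_\bbQ$ the summand $a\bar e$ contributes nothing to $\sfq(K_G,\lambda(v,w))$; hence $K'$ satisfies the defining equation of $K_{G_e}$. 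The one thing to verify is that the image of $\lambda$ spans $\NS(G_e)_\bbQ$: the rank functional is nonzero on $(G_e)_\bbQ$ (otherwise $\chi(\sfp,-)$ and $\chi(-,e)$ would be proportional, which, evaluating at $e$, forces $\chi(\sfp,-)=0$, contradicting non-degeneracy), so choosing $v_0\in(G_e)_\bbQ$ with $\sfr(v_0)=1$ yields $\lambda(v_0,w)=w$ for every rank-zero $w\in(G_e)_\bbQ$, and such $w$ span $\NS(G_e)_\bbQ$. Granting $K'=K_{G_e}$, orthogonality gives
\begin{equation*}
	K_G^2 = K_{G_e}^2 + a^2\sfq(\bar e,\bar e) = K_{G_e}^2 - \sfq(K_G,e)^2,
\end{equation*}
and combining with $\rk(G_e)=\rk(G)-1$ yields $\delta(G)-\delta(G_e)=\bigl(K_G^2-K_{G_e}^2\bigr)+1=1-\sfq(K_G,e)^2$, as asserted.

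For the second assertion, suppose $G$ is geometric, so $K_G$ is integral and characteristic. Then $\sfq(K_G,e)\in\bbZ$, and the characteristic property applied to $\bar e$ gives $\sfq(K_G,e)\equiv\sfq(\bar e,\bar e)=-1\pmod 2$; in particular $\sfq(K_G,e)$ is odd, hence nonzero, so $\sfq(K_G,e)^2\geq1$. Therefore $\delta(G)-\delta(G_e)=1-\sfq(K_G,e)^2\leq0$, with equality precisely when $\sfq(K_G,e)^2=1$, i.e.\ $\sfq(K_G,e)=\pm1$.

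The main obstacle I anticipate is the identification $K'=K_{G_e}$: one must check that the defining functional equation of the canonical class on $G_e$ is genuinely the restriction of the one on $G$, which hinges on the symmetry of $\chi$ on $\sfp^\perp$, the orthogonality $\bar e\perp\NS(G_e)_\bbQ$, and the spanning of $\NS(G_e)_\bbQ$ by the image of $\lambda$. Everything else is a short rational computation; note that unimodularity of $G$ is never used, since the whole argument takes place in $\NS(G)_\bbQ$.
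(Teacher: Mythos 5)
Your argument is correct and complete: the paper itself gives no proof of this lemma (it is quoted from Kuznetsov, \cite[Lem.~5.7]{kuznetsov_exceptional_collections_in_surface_like_categories}), and your route is essentially Kuznetsov's original one — the rational orthogonal decomposition $\NS(G)_\bbQ = \NS(G_e)_\bbQ \overset{\perp}{\oplus} \bbQ\bar e$, the identification of $K_{G_e}$ with the orthogonal projection of $K_G$ via the defining equation of \cref{def:canonical_class_k}, and the parity argument from the characteristic property in the geometric case. Your care over the two genuine subtleties (that $\sfr$ does not vanish on $G_e$, so the image of $\lambda$ spans $\NS(G_e)_\bbQ$ and the canonical class is pinned down, and that unimodularity is nowhere needed since $\chi(e,e)=1$ already forces non-degeneracy on ${}^\perp e$) is exactly right, so there is nothing to repair.
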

In the same manner a formula for the degree of the blow-up was obtained in \cite[Lem.~5.2.1]{de_thanhoffer_de_volcsey_van_den_berg_on_an_analogue_of_the_markov_equation_for_exceptional_collections_of_length_4}. The \emph{degree} of a unimodular surface-like pseudolattice $G$ is $\deg (G)=K_G^2$ and is related to the defect by the formula
\begin{equation}\label{eq:degree_in_term_of_rank}
	\deg (G) = 12 + \delta (G) - \rk (G).
\end{equation}

\begin{lemma}[{\cite[Lem.~5.2.1]{de_thanhoffer_de_volcsey_van_den_berg_on_an_analogue_of_the_markov_equation_for_exceptional_collections_of_length_4}}]\label{lem:formula_degree_blow_up}
	Let $G$ be a unimodular surface-like pseudolattice and let $\sigma \in G$ be an element such that its image $\bar{\sigma}$ generates $\Bl_z G / F^2 \Bl_z G\cong \bbZ$. Then $\deg \Bl_z G = \deg G - \chi(\sigma, z)^2$.
\end{lemma}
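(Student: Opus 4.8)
The plan is to compute the canonical class of $\Bl_z G$ in terms of the canonical class of $G$ and then use the defining formula of the degree as a self-intersection. First I would recall the defining equation of the canonical class from \cref{def:canonical_class_k}, namely that $K_{\Bl_z G}$ is the unique element of $\NS(\Bl_z G)_\bbQ$ satisfying $-\sfq(K_{\Bl_z G}, \lambda(v,w)) = \chi(v,w) - \chi(w,v)$ for all $v, w \in \Bl_z G$, where $\lambda(v,w) = \sfr(v)w - \sfr(w)v$. Since $\NS(\Bl_z G) = \NS(G) \overset{\perp}{\oplus} \bbZ \bar{f}$ with $\bar{f}^2 = -1$, I would write $K_{\Bl_z G} = K_G + c\,\bar{f}$ for some coefficient $c$ and determine $c$ by testing the defining equation against a pair $(f, \sigma)$; because $f$ has rank zero and $\sigma$ has rank one, $\lambda(\sigma, f) = \sfr(\sigma)f - \sfr(f)\sigma = f$ up to an element of $\bbZ\sfp$, and the antisymmetrized pairing $\chi(\sigma, f) - \chi(f, \sigma) = -\chi(f,\sigma) = -\chi(z, \sigma)$ by the very definition of the blow-up pairing. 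This pins down $c$ in terms of $\chi(\sigma, z)$, and one finds that $K_{\Bl_z G}$ projects to $K_G$ on the $\NS(G)$-summand with $\bar{f}$-coefficient essentially $\chi(\sigma, z)$ (up to a sign I would fix carefully in the calculation).

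The key algebraic step is therefore to verify that $K_{\Bl_z G} = K_G - \chi(\sigma, z)\,\bar{f}$ (or the analogous expression with the sign dictated by the conventions), which is exactly the numerical analogue of the classical formula $K_{\tilde S} = \pi^* K_S + \bar{E}$ for a geometric blow-up. Once this is established, I would compute
\begin{equation*}
	\deg \Bl_z G = K_{\Bl_z G}^2 = \sfq(K_G + c\,\bar{f}, K_G + c\,\bar{f}) = \sfq(K_G, K_G) + c^2 \sfq(\bar{f}, \bar{f}),
\end{equation*}
using that $\NS(G)$ and $\bbZ\bar{f}$ are orthogonal. Since $\sfq(\bar{f}, \bar{f}) = -1$ and $c = \pm\chi(\sigma, z)$, this gives $\deg \Bl_z G = K_G^2 - \chi(\sigma, z)^2 = \deg G - \chi(\sigma, z)^2$, which is the claim.

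The main obstacle will be the bookkeeping around $\NS$ and the lift of $\sigma$: the element $\sigma$ lives in $\Bl_z G$ (its image generating $\Bl_z G / F^2\Bl_z G$), whereas the canonical class lives in the quotient $\NS(\Bl_z G) = \sfp^\perp / \sfp$, so I must be careful to compute the pairing $\chi(\sigma, z)$ at the level of $\Bl_z G$ while interpreting $\lambda(\sigma, f)$ modulo $\sfp$ correctly. In particular, since $z \in \bbZ\sfp$ and the defining equation for $K_G$ only sees the antisymmetric part of $\chi$, I should check that replacing $\sigma$ by another lift (differing by an element of $\sfp^\perp$, or adjusting by multiples of $\sfp$) does not change $\chi(\sigma, z)^2$; this follows because $\chi(\sfp, z) = 0$ and the pairing of $z$ against $F^1\Bl_z G = \sfp^\perp$ is symmetric and vanishes on the relevant pieces. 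With that ambiguity resolved, the computation of $c$ and the orthogonal decomposition of $\NS(\Bl_z G)$ make the degree formula immediate.
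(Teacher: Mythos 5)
Your proof is correct, but it takes a genuinely different route from the paper's. The paper does not reprove the degree formula at all: it cites \cite[Lem.~5.2.1]{de_thanhoffer_de_volcsey_van_den_berg_on_an_analogue_of_the_markov_equation_for_exceptional_collections_of_length_4}, where the formula is established for \emph{their} canonical class, namely the image $\omega$ of $(S-1)(\sigma)$ in $\NS(G)$, and the entire content of the paper's proof is a Claim that $\omega$ satisfies the defining equation of \cref{def:canonical_class_k} up to sign --- so that $\sfq(\omega,\omega)=K_G^2$ and the cited formula transfers; that reconciliation in turn rests on \cite[Prop.~3.6.2]{de_thanhoffer_de_volcsey_van_den_berg_on_an_analogue_of_the_markov_equation_for_exceptional_collections_of_length_4}. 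You instead work entirely inside Kuznetsov's formalism and prove the formula from scratch: testing the defining equation on pairs from $G\times G$ pins the $\NS(G)$-component of $K_{\Bl_z G}$ to $K_G$ by uniqueness (note $\lambda(\sigma,w)=w$ for $w\in F^1G$, so these pairs do span $\NS(G)_\bbQ$), while the pair $(\sigma,f)$ gives the $\bar f$-coefficient via
\begin{equation*}
	-\sfq(K_{\Bl_z G},\bar f)=\chi(\sigma,f)-\chi(f,\sigma)=-\chi(z,\sigma),
\end{equation*}
so $c=-\chi(\sigma,z)$ (your sign ambiguity resolves this way; note $\chi(z,\sigma)=\chi(\sigma,z)$ since $z\in\bbZ\sfp$ and $\sfp$ is point-like, so the square is unaffected), and squaring with $\sfq(\bar f,\bar f)=-1$ gives the statement. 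The remaining generating pairs $(v,f)$ with $v\in F^1G$ hold trivially, since there $\lambda(v,f)=0$ and $\chi(v,f)-\chi(f,v)=-n\,\sfr(v)=0$, and your check that $\chi(\sigma,z)^2$ is independent of the choice of rank-one lift is exactly the right bookkeeping. As for what each approach buys: yours is self-contained, avoids any reliance on the external reference's propositions, and produces the numerical analogue of $K_{\tilde S}=\pi^*K_S+E$ as an explicit by-product; the paper's is shorter given the citation and, more importantly, establishes the compatibility of the two competing definitions of the canonical class, which is precisely the justification the paper needs in order to quote results of \cite{de_thanhoffer_de_volcsey_van_den_berg_on_an_analogue_of_the_markov_equation_for_exceptional_collections_of_length_4} within Kuznetsov's framework elsewhere.
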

Above \cref{lem:formula_degree_blow_up} requires a justification in our context, as it is possibly not clear that the canonical class in the sense of \cite{de_thanhoffer_de_volcsey_van_den_berg_on_an_analogue_of_the_markov_equation_for_exceptional_collections_of_length_4} coincides with the one in \cref{def:canonical_class_k}.
\begin{proof}[Proof of \cref{lem:formula_degree_blow_up}]
	The image $\omega$ of $(S-1)(\sigma)$ in $\NS(G)$ is the canonical class of $G$ in the sense of \cite[Def.~3.5.1]{de_thanhoffer_de_volcsey_van_den_berg_on_an_analogue_of_the_markov_equation_for_exceptional_collections_of_length_4} and in \cite[Lem.~5.2.1]{de_thanhoffer_de_volcsey_van_den_berg_on_an_analogue_of_the_markov_equation_for_exceptional_collections_of_length_4} the statement is shown for $\deg G\coloneqq  \sfq(\omega, \omega)$. Therefore it is enough to show:
	\begin{claim}
		Let $G$ be a unimodular surface-like pseudolattice, $\sigma \in G/ F^2G$ a generator and $\omega=(S-1)(\sigma) \in \NS(G)$. Then $\omega$ satisfies the defining equation in \Cref{def:canonical_class_k} up to sign, i.e.
		\begin{equation*}
			\pm \sfq (\omega, \lambda(v,w))= \chi(v,w)-\chi(w,v)
		\end{equation*}
		for all $v,w \in G$ and $\lambda$ as in \Cref{def:canonical_class_k}.
	\end{claim}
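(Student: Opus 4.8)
The plan is to prove directly that the representative $(S-1)(\sigma)\in\sfp^\perp$ satisfies the equation characterizing $K_G$ in \cref{def:canonical_class_k}; by the asserted uniqueness of the canonical class this forces $\omega=K_G$ once a sign convention is fixed, and the $\pm$ simply records that the generator $\sigma$, i.e.\ an element with $\sfr(\sigma)=\pm1$, is only defined up to sign. Throughout I abbreviate the antisymmetric part of the Euler pairing by $A(v,w)\coloneqq\chi(v,w)-\chi(w,v)$ and normalise $\sfr(\sigma)=1$.

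The preliminary step is to extract what I need from the Serre operator $S$, which is an isometry with $\chi(v,w)=\chi(w,S(v))$. Taking $w=\sfp$ gives $\sfr\comp S=\sfr$, and with the point-like symmetry $\chi(\sfp,-)=\chi(-,\sfp)$ this yields $S(\sfp)=\sfp$; hence $S-1$ annihilates $F^2G=\bbZ\sfp$ and carries $\sfp^\perp$ into $\bbZ\sfp$, so $\omega$ is a well-defined element of $\NS(G)$ depending only on $\sigma\bmod\sfp^\perp$. Rewriting $\chi(w,v)=\chi(v,S(w))$ also gives the identity $A(v,w)=-\chi\bigl(v,(S-1)(w)\bigr)$, which I will use repeatedly.

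The argument then rests on two computations. First, because $\chi$ is symmetric on $\sfp^\perp$, the form $A$ vanishes on $\sfp^\perp\times\sfp^\perp$; decomposing $v=\sfr(v)\sigma+v'$ and $w=\sfr(w)\sigma+w'$ with $v',w'\in\sfp^\perp$ and expanding bilinearly collapses everything to
\[
A(v,w)=\sfr(v)\,A(\sigma,w)-\sfr(w)\,A(\sigma,v).
\]
Second, for $u\in\sfp^\perp$ I evaluate $\chi\bigl((S-1)(\sigma),u\bigr)$, with $S-1$ now in the \emph{first} slot: using the isometry relation $\chi(S(\sigma),u)=\chi(\sigma,S^{-1}(u))$ and the fact that $(S-1)(u)\in\bbZ\sfp$ is fixed by $S^{-1}$, so that $S^{-1}(u)=u-(S-1)(u)$, I obtain $\chi\bigl((S-1)(\sigma),u\bigr)=-\chi\bigl(\sigma,(S-1)(u)\bigr)=A(\sigma,u)$ by the identity above. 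Transporting $S-1$ to the other argument of the Euler pairing is the only genuinely delicate point, and I expect this to be the main obstacle; everything else is formal.

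It remains to combine the two. Since $\lambda(v,w)=\sfr(v)w-\sfr(w)v$ lies in $\sfp^\perp$, the second computation applied to $u=\lambda(v,w)$ and then the first give
\[
\chi\bigl((S-1)(\sigma),\lambda(v,w)\bigr)=A\bigl(\sigma,\lambda(v,w)\bigr)=\sfr(v)\,A(\sigma,w)-\sfr(w)\,A(\sigma,v)=A(v,w).
\]
As $\sfq=-\chi$ on $\NS(G)$ and the displayed elements are the chosen representatives, this is precisely $-\sfq\bigl(\omega,\lambda(v,w)\bigr)=\chi(v,w)-\chi(w,v)$, i.e.\ the defining equation of \cref{def:canonical_class_k}. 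Thus $\omega=K_G$ for $\sfr(\sigma)=1$ and $\omega=\pm K_G$ in general, whence $\sfq(\omega,\omega)=K_G^2=\deg G$ and \cite[Lem.~5.2.1]{de_thanhoffer_de_volcsey_van_den_berg_on_an_analogue_of_the_markov_equation_for_exceptional_collections_of_length_4} applies.
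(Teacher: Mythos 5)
Your proof is correct, and it differs from the paper's in one substantive way: the paper does not prove the key identity itself but imports it, citing \cite[Prop.~3.6.2]{de_thanhoffer_de_volcsey_van_den_berg_on_an_analogue_of_the_markov_equation_for_exceptional_collections_of_length_4} for the formula $\chi(v,w)-\chi(w,v)=\sfr(w)\sfq(\tau(v),\omega)-\sfr(v)\sfq(\tau(w),\omega)$ (where $v=\sfr(v)\sigma+\tau(v)$), and then only performs the short expansion of $-\sfq(\omega,\lambda(v,w))$ that you also carry out at the end. You instead derive that identity from scratch: your reduction $A(v,w)=\sfr(v)A(\sigma,w)-\sfr(w)A(\sigma,v)$ together with $\chi\bigl((S-1)(\sigma),u\bigr)=A(\sigma,u)$ for $u\in\sfp^\perp$ is exactly equivalent to the cited proposition, since $\sfq(\tau(v),\omega)=-\chi\bigl((S-1)(\sigma),\tau(v)\bigr)=-A(\sigma,v)$ by symmetry of $\chi$ on $\sfp^\perp$. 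The step you flagged as delicate --- transporting $S-1$ across the pairing via $\chi(S(\sigma),u)=\chi(\sigma,S^{-1}(u))$ and $S^{-1}(u)=u-(S-1)(u)$ for $u\in\sfp^\perp$, which rests on $S(\sfp)=\sfp$ --- is correct and is precisely the content the paper outsources. What each approach buys: the paper's version is shorter and stays consistent with the conventions of \cite{de_thanhoffer_de_volcsey_van_den_berg_on_an_analogue_of_the_markov_equation_for_exceptional_collections_of_length_4} (whose definition of the canonical class via $(S-1)(\sigma)$ is the very thing being reconciled with \cref{def:canonical_class_k}), while yours is self-contained within Kuznetsov's axiomatics, using only the Serre-operator and surface-like axioms. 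One cosmetic remark: before forming $\omega\in\NS(G)$ you should note explicitly that $(S-1)(\sigma)\in\sfp^\perp$; this is immediate from $\sfr\circ S=\sfr$, which you established, so it is a matter of presentation rather than a gap.
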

	\paragraph{\emph{Proof of the Claim}}
		Since $G$ is unimodular, the rank map induces an isomorphism $\sfr \colon G/F^2 G \to \bbZ$. Let $\sigma \in G$ be a vector such that $\bar{\sigma}$ generates $G/F^2 G$. Up to possibly replacing $\sigma$ by $-\sigma$ we can write any $v\in G$ as $\sfr (v)\sigma + \tau(\sigma)$ with $\tau(\sigma) \in F^2G$.
		Let $\omega = (S-1)(\sigma)$ be the canonical class defined by $\sigma$ and let $d(v)\coloneqq \sfq(\tau(v), \omega)$ for all $v\in G$.
		By \cite[Prop.~3.6.2]{de_thanhoffer_de_volcsey_van_den_berg_on_an_analogue_of_the_markov_equation_for_exceptional_collections_of_length_4} the equality
		\begin{equation}\label{eq:symmetric_with_determinant}
			\chi(v, w)-\chi(w,v) = \det \begin{pmatrix}
				d(v) & d(w) \\
				\sfr (v) & \sfr (w)
			\end{pmatrix}
			= \sfr (w) \sfq(\tau(v), \omega) - \sfr(v) \sfq (\tau(w), \omega)
		\end{equation}
		holds for all $v, w\in G$.
		Let $\lambda$ be the alternating form as in \Cref{def:canonical_class_k}. Then
		\begin{align*}
			-\sfq(\omega, \lambda(v,w)) 
			&=  -\sfq(\sfr (v)( \sfr (w) \sigma + \tau(w)   )   -    \sfr (w)( \sfr (v) \sigma + \tau(v)   ) , \omega) \\
			&=  -\sfq(    \sfr(v) \sfr(w)   \sigma + \sfr (v) \tau(w)- \sfr(w) \sfr(v) \sigma - \sfr (w) \tau(v)         , \omega) \\
			&=   \sfq( \sfr (w) \tau(v)   - \sfr (v) \tau(w), \omega )
		\end{align*}
		combined with \Cref{eq:symmetric_with_determinant} proves the claim.
\end{proof}

\section{Proof of \texorpdfstring{\Cref{thm:main_result_introduction}~\labelcref{item:main_result_introduction_i}}{Theorem \ref{thm:main_result_introduction}~\ref{item:main_result_introduction_i}}}
\label{se:proof_of_main_result}
Throughout this section let $X_k$ be the blow-up of $\bbP^2$ in $k$ distinct points and let $G_k\coloneqq \sfK_0^{\mathrm{num}}(X_k)$ be the pseudolattice obtained from $X_k$. Using Vial's classification, see \cref{thm:classification} and \cref{rmk:defect_zero_for_lattices_from_surfaces}, we can rephrase \cref{thm:main_result_introduction}~\labelcref{item:main_result_introduction_i} as follows:
\begin{theorem}\label{thm:main_result}
	Let $e_\bullet$ and $f_\bullet$ be two exceptional bases of $G_k$ or of $\sfK_0^{\mathrm{num}}(\bbP^1\times\bbP^1)$. Then there exists an isometry $\phi \colon G \to G$ preserving the surface-like structure, i.e.~$\phi(\sfp)= \sfp$, such that $\phi(e_\bullet)$ and $f_\bullet$ are related by mutations up to signs.
\end{theorem}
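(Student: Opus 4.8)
The plan is to induct on the rank $n=\rk G$, reducing $n$ by the blow-down of \cref{se:blow_up_down}. By \cref{thm:classification} it suffices to treat $G=G_k$ and $G=\sfK_0^{\mathrm{num}}(\bbP^1\times\bbP^1)$; the latter is minimal and serves as a base case, while the $G_k$-induction descends through $G_{k-1},\dots,G_0=\sfK_0^{\mathrm{num}}(\bbP^2)$. I would first dispose of the two minimal base cases: by \cref{thm:perlings_algorithm} every exceptional basis of $\sfK_0^{\mathrm{num}}(\bbP^2)$ or $\sfK_0^{\mathrm{num}}(\bbP^1\times\bbP^1)$ is related by mutations and signs to one consisting entirely of rank $1$ elements. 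For $\bbP^2$ these are the classical Markov bases, known to be mutation-equivalent up to signs; for $\bbP^1\times\bbP^1$ the analogous statement holds after possibly applying the ruling-exchanging involution, which lies in $\Orth(\NS(G))_{K_G}$ and is realized by an isometry via \cref{lem:orth_trans_lift_to_isometries}.

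For the inductive step take $G=G_k$ with $k\geq1$ and exceptional bases $e_\bullet,f_\bullet$. Using \cref{thm:perlings_algorithm} I would mutate both, with sign changes, to norm-minimal bases $e_\bullet',f_\bullet'$; as $\NS(G)$ is odd these have exactly $3$ elements of rank $1$ and $k$ of rank $0$. The rank zero members lie in $\sfp^\perp$, where $\chi$ is symmetric, so upper unitriangularity forces them to be pairwise orthogonal in $\NS(G)$; being exceptional of rank $0$ they are $(-1)$-classes, and after sign changes I may assume $\sfq(K_G,\cdot)=-1$ on each. Using unimodularity and $\delta(G)=0$, i.e.\ $K_G^2=9-k$, a direct computation then shows that the $k$ rank zero classes extend to a basis $H,E_1,\dots,E_k$ of $\NS(G)$ with the standard intersection form and $K_G=-3H+\sum_i E_i$; that is, they form a \emph{geometric marking}, and likewise for $f_\bullet'$.

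The key observation is that any two such markings are interchanged by an isometry fixing $K_G$: the change-of-basis between two standard bases of $\NS(G)$ is tautologically a lattice automorphism, and it fixes $K_G$ because $K_G$ has the same standard expression in both. By \cref{lem:orth_trans_lift_to_isometries} it lifts to $\phi\in\Aut(G)$ with $\phi(\sfp)=\sfp$, and after composing with a twist by a line bundle---which changes only the $\sfp$-component of a rank zero class, by an arbitrary integer since a $(-1)$-class is primitive---I can arrange that $\phi(e_\bullet')$ and $f_\bullet'$ share a common rank zero element $\beta$. Moving $\beta$ into the first slot of each by right mutations $\sfR_{i,i+1}$ (which leave $\beta$ fixed) and contracting it, the trailing entries become exceptional bases $\bar e_\bullet,\bar f_\bullet$ of the contraction ${}^\perp\beta$, a unimodular geometric surface-like pseudolattice of rank $n-1$; it has defect $0$ because it carries an exceptional basis, hence is isometric to $\sfK_0^{\mathrm{num}}(X_{k-1})$. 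The induction hypothesis relates $\bar e_\bullet$ and $\bar f_\bullet$ by mutations, signs, and an isometry of ${}^\perp\beta$; each of these lifts to $G=\Bl_z({}^\perp\beta)$ with $z=(S-1)(\beta)$ while fixing $\beta$ in the leading position---mutations and signs touch only the trailing entries, and an isometry of ${}^\perp\beta$ preserving $\sfp$ extends to $\Bl_z({}^\perp\beta)$ fixing $\beta$ because it preserves the rank function---so concatenating all the moves relates $e_\bullet$ and $f_\bullet$.

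The step I expect to be the main obstacle is the lattice-theoretic heart together with the bookkeeping across the contraction: showing that the rank zero part of a norm-minimal basis is genuinely a geometric marking, and that contraction stays inside the classified family---here \cref{lem:formula_defect_contraction} and \cref{thm:classification} conspire to force $\sfq(K_G,\beta)=\pm1$ automatically---and finally that the lifted isometry of ${}^\perp\beta$ and the line-bundle twist used to match $\beta$ can be assembled into a single self-isometry of $G$ preserving $\sfp$. The transitivity on markings, by contrast, is essentially free once the marking lemma is in place, which is exactly what makes the isometry the right amount of extra freedom to allow. The only genuinely external input is the classical Markov transitivity for $\bbP^2$.
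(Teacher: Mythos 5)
There is a genuine gap, and it sits precisely where the paper invests its main computational effort. Your inductive step opens with the claim that, since $\NS(G_k)$ is odd, a norm-minimal basis has \emph{exactly} $3$ rank-one elements. This does not follow: \cref{thm:perlings_algorithm} only guarantees $3$ \emph{or} $4$, and the parity criterion in \cref{thm:classification} applies to the minimal pseudolattice obtained after contracting \emph{all} rank-zero elements, not to $\NS(G)$ itself --- contraction can change the parity of the N\'eron--Severi lattice. Indeed, for $k\geq 1$ the pseudolattice $G_k$ is equally a numerical blow-up of $\sfK_0^{\mathrm{num}}(\bbP^1\times\bbP^1)$ (already $X_2$ is the blow-up of $\bbP^1\times\bbP^1$ in one point), so a priori a norm-minimal basis could consist of $k-1$ rank-zero elements together with $4$ rank-one elements whose iterated contraction is the even lattice $\sfK_0^{\mathrm{num}}(\bbP^1\times\bbP^1)$. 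If your $e'_\bullet$ landed in the $m=3$ shape and $f'_\bullet$ in the $m=4$ shape, your geometric-marking argument has nothing to compare: with only $k-1$ pairwise orthogonal $(-1)$-classes the orthogonal complement has rank $2$, no marking $H,E_1,\dots,E_k$ exists, and there is no common $\beta$ to contract. Excluding $m=4$ is exactly the content of \cref{prop:blow_up_two_points_transitivity}: the paper produces the explicit norm-reducing mutation sequence \cref{eq:mutations_hirzebruch_iso} showing that in $G_2$ a basis with four rank-one members is never norm-minimal, and the main proof then funnels every $G_k$ through this rank-$5$ computation by partially contracting, mutating inside the contraction, and lifting the mutations (which fix the contracted vectors). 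Your proposal contains no substitute for this step, and it is not a formality --- it is the heart of the proof.

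A secondary soft spot is the base case $\bbP^1\times\bbP^1$: the ruling-exchanging involution does not relate the norm-minimal bases with Gram matrices $D_c$ for different $c$ (swapping the rulings returns a collection with Gram matrix $D_c$ again), so some computation like the paper's $\sfL_{3,4}$/$\sfR_{3,4}$-plus-sign argument in \cref{prop:result_for_P1_x_P1} is still needed; your appeal to Perling plus the involution does not cover it. The remainder of your plan is sound and is essentially a repackaging of the paper's endgame: your marking transitivity in $\NS(G)$, with $\sfq(K_G,\beta)=\pm1$ forced by \cref{lem:formula_defect_contraction} and defect zero, plays the role of the paper's normal form $M_k$ (where the paper instead pins down the blow-up points $n_i\sfp$ with $n_i=\pm1$ via \cref{lem:formula_degree_blow_up}), and your extension of an isometry of ${}^\perp\beta$ to $G=\Bl_z({}^\perp\beta)$ fixing $\beta$ is correct, since preservation of $\sfp$ gives $\chi(z,\psi(g))=n\,\sfr(\psi(g))=n\,\sfr(g)=\chi(z,g)$. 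Once the $m=4$ exclusion is supplied, your blow-down induction goes through and is a legitimate alternative organization of the same proof.
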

In preparation for the proof of \Cref{thm:main_result} we compute an explicit form of the pseudolattices $G_k$.
The surface $\bbP^2$ admits a full exceptional sequence consisting of line bundles, namely the Beilinson sequence $\sfD^b(\bbP^2)=\langle \mcO_{\bbP^2}, \mcO_{\bbP^2}(1), \mcO_{\bbP^2}(2) \rangle$. This yields an exceptional basis of the numerical Grothendieck group $G_0\coloneqq \sfK_0^{\mathrm{num}}(\bbP^2)=\bbZ [\mcO_{\bbP^2}] \oplus \bbZ [\mcO_{\bbP^2}(1)] \oplus \bbZ [\mcO_{\bbP^2}(2)]$ with Gram matrix
\begin{equation*}
	M_0\coloneqq  \begin{pmatrix}
		1 & 3 & 6 \\
		0 & 1 & 3 \\
		0 & 0 & 1
	\end{pmatrix}.
\end{equation*}

\begin{lemma}
	Let $e_\bullet=(e_1, e_2, e_3)$ be an exceptional basis of $G_0$ with Gram matrix $M_0$, then a point-like element is given by $\sfp\coloneqq e_3 -2e_2 + e_1$.
\end{lemma}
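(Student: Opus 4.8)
The plan is to verify directly that $\sfp \coloneqq e_1 - 2e_2 + e_3$ is primitive and satisfies the three defining conditions of a point-like element, reading everything off the Gram matrix $M_0$. Primitivity is immediate, since the coordinate vector $(1,-2,1)$ has content $1$ and so $\sfp$ is not a proper multiple of any lattice element.

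The heart of the computation is to record the pairings of $\sfp$ against the basis. Using $M_0$ and bilinearity one finds $\chi(\sfp, e_i) = \chi(e_i, \sfp) = 1$ for $i = 1, 2, 3$ (for instance $\chi(\sfp, e_1) = 1 - 0 + 0 = 1$ while $\chi(e_1, \sfp) = 1 - 6 + 6 = 1$). Since these two values agree on a basis, condition (ii), namely $\chi(\sfp, v) = \chi(v, \sfp)$ for all $v$, follows by bilinearity; equivalently the associated rank function $\sfr = \chi(\sfp, -)$ is the coordinate sum $\sfr(ae_1 + be_2 + ce_3) = a + b + c$, so $\sfr(e_i) = 1$, matching the fact that each $e_i$ is the class of a line bundle. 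Condition (i) is then the single evaluation $\chi(\sfp, \sfp) = 1 - 2 + 1 = 0$.

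The only condition requiring an extra observation is (iii), the symmetry of $\chi$ on $\sfp^\perp$, and here the low rank does the work. Since $\sfr \neq 0$, the subgroup $\sfp^\perp = \ker \sfr$ has rank $2$, and by condition (i) it contains $\sfp$ itself; hence $\sfp^\perp$ is spanned by $\sfp$ together with any one further element of $\ker \sfr$, for instance $e_1 - e_3$. For any pair $v, w \in \sfp^\perp$ the antisymmetric part $\chi(v, w) - \chi(w, v)$ therefore either involves $\sfp$, in which case it vanishes by condition (ii), or is a diagonal term, which vanishes trivially. Thus $\chi$ is symmetric on $\sfp^\perp$, completing the verification.

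I do not expect a genuine obstacle, as the claim reduces to a finite computation. The one structural point worth isolating is that in rank $3$ condition (iii) is forced once condition (ii) holds, because $\sfp^\perp / \bbZ\sfp$ is then one-dimensional and every alternating form on a line vanishes. This also explains the choice of $\sfp$: up to sign it is the primitive generator of the radical of the antisymmetrization $M_0 - M_0^T$, which is exactly the locus on which $\chi(\sfp, -)$ and $\chi(-, \sfp)$ agree.
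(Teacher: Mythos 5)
Your proof is correct, but it takes a genuinely different route from the paper's. The paper argues geometrically: it identifies $e_1, e_2, e_3$ with the Beilinson classes $[\mcO_{\bbP^2}], [\mcO_{\bbP^2}(1)], [\mcO_{\bbP^2}(2)]$ and twists the Koszul resolution of a skyscraper sheaf by $\mcO_{\bbP^2}(2)$ to get $[\mcO_x] = e_3 - 2e_2 + e_1$ in $\sfK_0^{\mathrm{num}}(\bbP^2)$; point-likeness is then inherited from the general fact that skyscraper classes are point-like. You instead verify the axioms directly from the Gram matrix, and your computations all check out: $\chi(\sfp, e_i) = \chi(e_i, \sfp) = 1$ for all $i$, whence (ii) by bilinearity and (i) from $1 - 2 + 1 = 0$, and in rank $3$ condition (iii) is indeed forced by (ii), since the alternating part of $\chi$ kills $\sfp$ and descends to the rank-one quotient $\sfp^\perp/\bbZ\sfp$, where it must vanish. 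Your purely lattice-theoretic verification has a mild advantage: it applies verbatim to an \emph{arbitrary} exceptional basis with Gram matrix $M_0$, without the implicit transport along an isometry to the Beilinson basis that the paper's proof uses; it is close in spirit to the alternative the paper only sketches in the remark following the lemma, where $\sfp$ is recovered up to sign as the primitive generator of $\Im(S-1)^2$ --- consistent with your closing observation that $(1,-2,1)$ spans the radical of $M_0 - M_0^T$. What the paper's proof buys instead is the identification of $\sfp$ with the actual class $[\mcO_x]$, which matters because the paper's standing convention endows $\sfK_0^{\mathrm{num}}(S)$ with the point-like element of a skyscraper sheaf; from your argument this identification follows only a posteriori, via the uniqueness of the point-like element of $G_0$ up to sign. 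One small imprecision to fix: $\sfp$ and $e_1 - e_3$ generate only an index-two subgroup of $\sfp^\perp$ (indeed $\sfp + (e_1 - e_3) = 2(e_1 - e_2)$, so $e_1 - e_2$ is not in their $\bbZ$-span), and ``any one further element'' would also fail for a multiple of $\sfp$; your step should read that $\sfp$ together with $e_1 - e_3$ is a $\bbQ$-spanning set of $\sfp^\perp_\bbQ$. This is harmless for the argument, since the alternating form $\chi(v,w) - \chi(w,v)$ is bilinear, so its vanishing on a rational spanning set already gives symmetry on all of $\sfp^\perp$.
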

\begin{proof}
	For a closed point $i \colon \{x\} \hookrightarrow {\bbP^2}$ the skyscraper-sheaf $i_*k(x)=\mcO_x$ admits a Koszul resolution
	\begin{equation*}
		[0 \to \mcO_{\bbP^2}(-2)=\bigwedge^2 \mcO_{\bbP^2}(-1)^{\oplus 2} \to \mcO_{\bbP^2}(-1)^{\oplus 2} \to \mcO_{\bbP^2} \to 0] \cong \mcO_x.
	\end{equation*}
	Thus, we obtain after twisting by $\mcO_{\bbP^2}(2)$
	\begin{equation*}
		[\mcO_x] = [\mcO_{\bbP^2}(2)]-2[\mcO_{\bbP^2}(1)] + [\mcO_{\bbP^2}] = e_3 -2e_2 + e_1 \in \sfK_0^{\mathrm{num}}(\bbP^2).\qedhere
	\end{equation*}
\end{proof}
\begin{remark}
	The point-like element can also be computed directly from the pseudolattice, using the explicit description of \cite[Lem.~3.3.2]{de_thanhoffer_de_volcsey_van_den_berg_on_an_analogue_of_the_markov_equation_for_exceptional_collections_of_length_4}. Namely if $V\coloneqq  G_\bbQ$, then $F^2 V= \Im(S-1)^2$ and $F^2 V = \bbQ \sfp$. Thus $\sfp$ spans the line $\Im(S-1)^2$ over $\bbQ$ and is primitive.
	In the case of $\bbP^2$ one computes $S= M_0^{-1}M_0^T$ and
	\begin{equation*}
		(S-1)^2 = \begin{pmatrix}
			9 & 9 & 9 \\
			-18 & -18 & -18 \\
			9 & 9 & 9
		\end{pmatrix}.
	\end{equation*}
	Hence, $\sfp=\pm (1, -2,1)$.
\end{remark}

By the blow-up formula we compute Gram matrices $M_k$ of the pseudolattices $G_k$, namely
\begin{equation*}
	M_k\coloneqq  \begin{pmatrix}
		1 & 0 & \cdots & 0 & 1 & 1 & 1 \\
		0 & 1 & \cdots & 0 & 1 & 1 & 1 \\
		\vdots &   &\ddots & \vdots & \vdots & &\vdots \\
		0 & 0 & \cdots & 1 & 1 & 1 & 1 \\
		0 & 0 & \cdots & 0 & 1 & 3 & 6 \\
		0 & 0 & \cdots & 0 & 0 & 1 & 3 \\
		0 & 0 & \cdots & 0 & 0 & 0 & 1 
	\end{pmatrix}=
\left(\begin{array}{@{}c|c@{}}
	\mathrm{id}_{k\times k}
	& \begin{matrix}
		1 & 1 & 1 \\
		\vdots & \vdots &\vdots \\
		1 & 1 & 1
	\end{matrix} \\
	\hline
	0 & M_0
\end{array}\right),
\end{equation*}
since $\chi(-\sfp, e_i)=1$ for $i=1, 2, 3$.
Denote by $b_1, \dots, b_k, e_1, e_2, e_3$ the exceptional basis corresponding to this Gram matrix. The elements $b_i$ are all orthogonal to $\sfp$, so of rank zero and the corresponding images in $\NS(G)$ have self-intersection $-1$. Here, we have numerically blown up the point $-\sfp$ in order to obtain only positive signs in the Gram matrix.
We first verify \cref{thm:main_result} in the minimal cases:
\begin{proposition}[{\cite[Cor.~4.25]{kuznetsov_exceptional_collections_in_surface_like_categories}}]\label{prop:result_for_P2}
	Any two exceptional bases in $\sfK_0^{\mathrm{num}}(\bbP^2)$ are related by mutations up to sign and isometry.
\end{proposition}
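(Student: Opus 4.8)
The plan is to reduce, by Perling's algorithm, to exceptional bases all of whose members have rank $1$, and then to observe that such bases are forced to be consecutive twists of the Beilinson collection, which are interchanged by twisting with $\mcO_{\bbP^2}(1)$.

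First I would invoke \cref{thm:perlings_algorithm}: since $\rk \sfK_0^{\mathrm{num}}(\bbP^2)=3$, any exceptional basis can be transformed by mutations and sign changes into a norm-minimal basis consisting of exactly three elements of rank $1$ (the alternative of four rank-$1$ elements is impossible when $n=3$). Thus both $e_\bullet$ and $f_\bullet$ may be assumed to consist of three rank-$1$ exceptional elements. Because the action of mutations and sign changes commutes with that of isometries preserving $\sfp$, it then suffices to relate two such bases in the required sense.

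Next I would identify the rank-$1$ exceptional elements. By the Riemann--Roch computation carried out in the proof of \cref{lem:orth_trans_lift_to_isometries}, any rank-$1$ numerically exceptional element satisfies $\cc_2=0$ and hence equals $\pm[\mcO_{\bbP^2}(d)]$ for some $d\in\bbZ$, under the identification $\NS(\sfK_0^{\mathrm{num}}(\bbP^2))\cong\bbZ H$. Applying sign changes, I may assume each basis element equals $[\mcO_{\bbP^2}(d_i)]$ with $d_i\in\bbZ$; this preserves the upper-unitriangular shape of the Gram matrix. The exceptionality condition $\chi(\mcO_{\bbP^2}(d_i),\mcO_{\bbP^2}(d_j))=\binom{d_j-d_i+2}{2}=0$ for $i>j$ forces $d_i-d_j\in\{1,2\}$ for all $i>j$, and a short case analysis (using $(d_3-d_1)=(d_3-d_2)+(d_2-d_1)$) then yields $d_2-d_1=d_3-d_2=1$. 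Hence the basis is $\langle\mcO_{\bbP^2}(d),\mcO_{\bbP^2}(d+1),\mcO_{\bbP^2}(d+2)\rangle$ for some $d$, a twist of the Beilinson collection.

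Finally, twisting with $\mcO_{\bbP^2}(1)$ is an isometry of $\sfK_0^{\mathrm{num}}(\bbP^2)$ fixing the point-like element $\sfp$, as observed in the proof of \cref{lem:orth_trans_lift_to_isometries}, and a suitable power of it carries $\langle\mcO_{\bbP^2}(a),\mcO_{\bbP^2}(a+1),\mcO_{\bbP^2}(a+2)\rangle$ to $\langle\mcO_{\bbP^2}(b),\mcO_{\bbP^2}(b+1),\mcO_{\bbP^2}(b+2)\rangle$; combining this with the two reductions above gives the assertion. The bulk of the work is hidden in \cref{thm:perlings_algorithm}, so the only genuinely $\bbP^2$-specific input is the rigidity of rank-$1$ exceptional bases established in the third step. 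I expect that to be the only delicate point, and even it reduces to elementary arithmetic once the Riemann--Roch description of rank-$1$ exceptional classes is in hand.
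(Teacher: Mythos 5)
Your proof is correct and follows essentially the same route as the paper: the paper simply cites \cite[Cor.~4.25]{kuznetsov_exceptional_collections_in_surface_like_categories} for the fact that norm-minimal exceptional bases of $\sfK_0^{\mathrm{num}}(\bbP^2)$ are exactly the (twisted) Beilinson sequences, which is precisely what you establish by hand via \cref{thm:perlings_algorithm}, the Riemann--Roch identification of rank-$1$ exceptional classes with $\pm[\mcO_{\bbP^2}(d)]$, and the arithmetic $\binom{d_j-d_i+2}{2}=0$. Your version is a self-contained unpacking of that citation, with the final twist-by-$\mcO_{\bbP^2}(1)$ isometry handled exactly as in the paper's framework.
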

\begin{proof}
	By \cite[Cor.~4.25]{kuznetsov_exceptional_collections_in_surface_like_categories} norm-minimal exceptional bases of $\sfK_0^{\mathrm{num}}(\bbP^2)$ correspond to the Beilinson sequence $\langle \mcO_{\bbP^2}, \mcO_{\bbP^2}(1), \mcO_{\bbP^2}(2) \rangle$. Therefore any two exceptional bases are related by mutations up to sign and isometry.
\end{proof}
\begin{proposition}\label{prop:result_for_P1_x_P1}
	Any two exceptional bases in $\sfK_0^{\mathrm{num}}(\bbP^1 \times \bbP^1)$ are related by mutations up to sign and isometry.
\end{proposition}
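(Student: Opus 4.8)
The plan is to combine Perling's algorithm with the explicit geometry of the rank-two lattice $\NS(G)$, where $G\coloneqq\sfK_0^{\mathrm{num}}(\bbP^1\times\bbP^1)$, following the same strategy as for \cref{prop:result_for_P2} but supplying the extra work needed because the minimal case is no longer rigid. First I would reduce an arbitrary exceptional basis to a norm-minimal one. By \cref{thm:perlings_algorithm} every exceptional basis is related by mutations and sign changes to one consisting of three or four rank-one elements together with rank-zero elements. Since by \cref{thm:classification} the N\'eron--Severi lattice $\NS(G)$ is even, it contains no $(-1)$-class, so $G$ has no exceptional element of rank zero; hence a norm-minimal basis of $G$ necessarily consists of \emph{four} rank-one elements. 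By Riemann--Roch, \cref{eq:Riemann_Roch}, any rank-one exceptional element has $\cc_2=0$ and is therefore the class of a line bundle (as in the proof of \cref{lem:orth_trans_lift_to_isometries}). Thus it suffices to treat bases $(L_{D_1},\dots,L_{D_4})$, with $L_D\coloneqq[\mcO(D)]$, given by four divisor classes $D_1,\dots,D_4\in\NS(G)$.

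Next I would make the pairing explicit. Choosing a basis $f_1,f_2$ of $\NS(G)$ with $f_1^2=f_2^2=0$, $f_1\cdot f_2=1$ and $K_G=-2(f_1+f_2)$, Riemann--Roch gives $\chi(L_D,L_{D'})=(1+a)(1+b)$ whenever $D'-D=af_1+bf_2$. By \cref{lem:orth_trans_lift_to_isometries} the isometries of $G$ fixing $\sfp$ are exactly the twists by line bundles together with the elements of $\Orth(\NS(G))_{K_G}$; a direct computation identifies the latter group with $\bbZ/2$, generated by the swap $f_1\leftrightarrow f_2$. Using a twist I would normalise $D_1=0$, so that the exceptional-basis condition $\chi(L_{D_i},L_{D_j})=0$ for $i>j$ becomes the requirement that for all $i>j$ the class $D_i-D_j$ has one coordinate (with respect to $f_1,f_2$) equal to $1$. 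The standard Beilinson-type collection $(L_0,L_{f_1},L_{f_2},L_{f_1+f_2})$ provides the target normal form.

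The substantive step is to show that every normalised basis can be transported to this standard one by mutations, sign changes, and the swap. Here I expect the main obstacle: in contrast to $\bbP^2$, where \cref{prop:result_for_P2} shows the norm-minimal basis is unique up to sign and isometry, there are \emph{infinitely many} isometry classes of norm-minimal bases of $G$ --- many of them arising, for instance, from the Hirzebruch surfaces $\Sigma_{2m}$, whose numerical Grothendieck groups are isometric to $G$ by \cref{thm:classification}, and for which the self-intersection data $\{A_i\cdot A_i\}$ of the successive differences $A_i\coloneqq D_{i+1}-D_i$ distinguish distinct values of $m$. Consequently the reduction cannot be a finite check and must proceed by a descent. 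I would equip a normalised basis with a complexity measure --- for instance the maximal magnitude $\max_i\lvert A_i\cdot A_i\rvert$, refined by the adjacent Euler numbers $\chi(e_i,e_{i+1})$ --- and show that a non-standard basis always admits an adjacent pair with $\chi(e_i,e_{i+1})=2$ whose mutation is norm-preserving (it replaces a rank-one element by one of rank $-1$, corrected by a sign change) and strictly decreases the complexity. The delicate point, and what I view as the crux of the argument, is to prove that such a complexity-decreasing move always exists until the standard form is reached; the residual ambiguity is then absorbed by the swap isometry. Combined with the first step, this shows that any two exceptional bases of $\sfK_0^{\mathrm{num}}(\bbP^1\times\bbP^1)$ are related by mutations up to sign and isometry.
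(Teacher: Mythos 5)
Your preparatory steps are all sound and consistent with the paper's framework: the reduction via \cref{thm:perlings_algorithm}, the observation that evenness of $\NS(G)$ forces a norm-minimal basis to consist of four rank-one elements, the identification of rank-one exceptional classes with line bundles, the computation $\Orth(\NS(G))_{K_G}\cong\bbZ/2$ generated by the swap $f_1\leftrightarrow f_2$, and---importantly---the correct diagnosis that, unlike for \cref{prop:result_for_P2}, there are infinitely many isometry classes of norm-minimal bases, so no finite check suffices. However, the proof has a genuine gap exactly where you flag it. Your descent argument is announced, not executed: you neither classify the normalised bases on which the descent is supposed to run (i.e., solve the orthogonality constraints on $D_2, D_3, D_4$), nor prove that a non-standard basis always admits an adjacent pair whose mutation, corrected by a sign change, is norm-preserving and strictly decreases your complexity measure $\max_i\lvert A_i\cdot A_i\rvert$. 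Without both ingredients the induction has no base set of normal forms and no guaranteed decreasing step, so the argument as written does not close.

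The paper fills precisely this hole with two concrete inputs. First, it does not re-derive the normal forms: by \cite[Cor.~4.27]{kuznetsov_exceptional_collections_in_surface_like_categories}, every norm-minimal exceptional basis of $\sfK_0^{\mathrm{num}}(\bbP^1\times\bbP^1)$ has Gram matrix $D_c$ for some $c\in\bbZ$, corresponding to $\langle \mcO, \mcO(1,0), \mcO(c,1), \mcO(c+1,1)\rangle$---this is exactly the classification your normalisation step would otherwise have to reprove by hand. Second, the descent collapses to a one-line computation: applying $\sfL_{3,4}$ to such a basis gives $(b_1,b_2,-2b_3+b_4,b_3)$, and after negating the new third element the Gram matrix becomes $D_{c-1}$; symmetrically $\sfR_{3,4}$ produces $D_{c+1}$. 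Hence all the matrices $D_c$ lie in a single orbit under mutations and sign changes, and two bases sharing a Gram matrix differ by an isometry which, preserving the rank function, preserves the surface-like structure. Note that your heuristic was on target: the mutated element $-2b_3+b_4$ has rank $-1$ and is fixed by a sign change, and one computes $A_2\cdot A_2=2(c-1)$ for the $D_c$-collection, so your measure does track $c$; the missing crux---the existence of the complexity-decreasing move---is exactly this $\sfL_{3,4}$/$\sfR_{3,4}$ computation, which you would need to carry out (together with the normal-form classification) to complete your version of the proof.
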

\begin{proof}
	If $G$ admits a norm-minimal basis consisting of objects of nonzero rank, $G$ is isometric to $\sfK_0^{\mathrm{num}} (\bbP^1 \times \bbP^1)$ and the norm-minimal basis corresponds to one of the full exceptional collections
	\begin{equation*}
		\sfD^b(\bbP^1 \times \bbP^1)= \langle \mcO_{\bbP^1\times\bbP^1}, \mcO_{\bbP^1\times\bbP^1} (1,0), \mcO_{\bbP^1\times\bbP^1}(c,1), \mcO_{\bbP^1\times\bbP^1} (c+1, 1) \rangle
	\end{equation*}
	for some $c\in \bbZ$; see \cite[Cor.~4.27]{kuznetsov_exceptional_collections_in_surface_like_categories}.
	The corresponding Gram matrix is
	\begin{equation*}
		D_c\coloneqq \begin{pmatrix}
			1 & 2 & 2c+2 & 2c+4 \\
			0 & 1 & 2c & 2c+2 \\
			0 & 0 & 1 & 2 \\
			0 & 0 & 0 & 1
		\end{pmatrix},
	\end{equation*}
	see \cite[Ex.~3.7]{kuznetsov_exceptional_collections_in_surface_like_categories}. Now we mutate the third and fourth basis vector and compute the corresponding Gram matrices:
	\begin{align*}
		\sfL _{3,4} (b_1, \dots, b_4)=  (b_1, b_2, -2b_3+b_4, b_3), &&  \begin{pmatrix}
			1 & 2 & -(2(c-1)+2) & 2(c-1)+4 \\
			0 & 1 & - 2(c-1) & 2(c-1)+2 \\
			0 & 0 & 1 & -2 \\
			0 & 0 & 0 & 1
		\end{pmatrix}, \\
		\sfR _{3,4} (b_1, \dots, b_4)=  (b_1, b_2, b_4, b_3-2b_4), &&\begin{pmatrix}
			1 & 2 & 2(c+1) + 2 & -( 2(c+1)+4) \\
			0 & 1 & 2(c+1) & -(2(c+1)+2) \\
			0 & 0 & 1 & -2 \\
			0 & 0 & 0 & 1
		\end{pmatrix}.
	\end{align*}
	Multiplying $-2b_3 +b_4$ by $-1$ in the first case and $b_3-2b_4$ in the second case, we observe that all bases corresponding with Gram matrices $D_c$ are related by mutations up to sign and isometry.
\end{proof}

For later use in the proof of \cref{thm:main_result}, we also treat the surfaces $X_1$ and $X_2$ by hand. 
\begin{proposition}\label{prop:result_for_X_1}
	Any two exceptional bases in $\sfK_0^{\mathrm{num}}(X_1)$ are related by mutations up to sign and isometry.
\end{proposition}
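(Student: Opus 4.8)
\emph{Strategy.} The plan is to reduce to the already-settled case of $\bbP^2$ (\cref{prop:result_for_P2}) via the blow-down construction of \cref{se:blow_up_down}, and then to show that every exceptional basis of $G_1$ is related by mutations up to sign and isometry to the fixed standard basis $(b_1, e_1, e_2, e_3)$ with Gram matrix $M_1$; since this relation is an equivalence relation compatible with composition of isometries, this proves any two are related. By \cref{thm:perlings_algorithm} it suffices to treat norm-minimal exceptional bases. Such a basis has three or four elements of rank $1$; four would force $G_1 \cong \sfK_0^{\mathrm{num}}(\bbP^1\times\bbP^1)$ (cf.\ \cref{prop:result_for_P1_x_P1}), which is impossible since $\NS(G_1)$ is odd by \cref{thm:classification}. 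Hence a norm-minimal basis has exactly three elements of rank $1$ and a single element $f$ of rank $0$, which is a $(-1)$-class.

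\emph{Moving $f$ to the front and contracting.} First I would slide $f$ into the first position: each right mutation $\sfR_{j-1,j}$ moves $f$ one step to the left without altering $f$ and without changing any ranks, so after finitely many mutations we reach an exceptional basis $(f, h_1, h_2, h_3)$. Exceptionality forces $\chi(h_i, f)=0$, so $h_1, h_2, h_3$ lie in the contraction $G_f = {}^\perp f \subseteq G_1$ and form an exceptional basis of it. The pseudolattice $G_f$ is unimodular, geometric and of rank $3$; since the only $(-1)$-classes in the rank-$2$ lattice $\NS(G_1)$ are $\pm E$, we have $\sfq(K_{G_1}, f)=\pm 1$, so \cref{lem:formula_defect_contraction} gives $\delta(G_f)=\delta(G_1)=0$. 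By \cref{thm:classification}, $G_f$ is isometric to $\sfK_0^{\mathrm{num}}(\bbP^2)$.

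\emph{Lifting and normalizing.} Now I would invoke \cref{prop:result_for_P2}: the basis $(h_1, h_2, h_3)$ can be transformed, by mutations, sign changes and an isometry $\psi$ of $G_f$ preserving $\sfp$, into the Beilinson basis of $G_f$. Mutations and sign changes among the $h_i$ lift to the operations in positions $2,3,4$ of $G_1$, since the pairing on $G_f$ is the restriction of $\chi$; and $\psi$ lifts to an isometry $\Psi$ of $G_1$ fixing $f$ and $\sfp$, because, writing $G_1 = \Bl_z G_f$ with $z=(S-1)(f)\in\bbZ\sfp$ as in \cref{se:blow_up_down}, the extension $\Psi(f)=f$, $\Psi|_{G_f}=\psi$ is an isometry as $\psi$ fixes $z$. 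This yields $(f, \mcO, \mcO(1), \mcO(2))$ with the last three vectors the Beilinson basis of $G_f$. Since $\deg G_1 = 8$ by \cref{eq:degree_in_term_of_rank}, \cref{lem:formula_degree_blow_up} forces $\chi(\sigma, z)^2 = 1$, so $z = \pm\sfp$; replacing $f$ by $-f$ if necessary we may assume the Gram matrix is exactly $M_1$. Finally, the linear map sending $(f, \mcO, \mcO(1), \mcO(2))$ to $(b_1, e_1, e_2, e_3)$ is an isometry (equal Gram matrices) preserving $\sfp$, because in either basis $\sfp$ lies in the respective contraction and has coordinates $(0,1,-2,1)$ via the formula $\sfp = \mcO(2)-2\mcO(1)+\mcO$ computed for $\bbP^2$. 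This relates our basis to the standard one and completes the argument.

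\emph{Main obstacle.} The delicate point is the bookkeeping across the contraction and re-blow-up: one must track the point-like element throughout and verify that the reconstructed parameter $z=(S-1)(f)$ is forced to be $\pm\sfp$, so that a single sign change on $f$ aligns the final Gram matrix with $M_1$ and the closing isometry preserves $\sfp$. The identification of $f$ with the unique $(-1)$-class (up to sign) in $\NS(G_1)$ is exactly what guarantees, via \cref{lem:formula_defect_contraction}, that the contraction has defect $0$ and lands in the $\bbP^2$ case rather than in a pseudolattice of higher defect.
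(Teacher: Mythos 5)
Your proof is correct and takes essentially the same route as the paper's: Perling's algorithm to reach a norm-minimal basis with one rank-zero and three rank-one elements, contraction of the rank-zero element to land in $\sfK_0^{\mathrm{num}}(\bbP^2)$, and the defect/degree bookkeeping via \cref{eq:degree_in_term_of_rank} and \cref{lem:formula_degree_blow_up} forcing $z=\pm\sfp$, so that a single sign change aligns the Gram matrix with $M_1$. Your extra steps (sliding $f$ to the front by right mutations, lifting the isometry $\psi$ of $G_f$ to $G_1$ using $z\in\bbZ\sfp$, and checking the closing isometry fixes $\sfp$) merely spell out details the paper leaves implicit.
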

\begin{proof}
	Since $\sfK_0^{\mathrm{num}}(X_1)$ is not isometric to $\sfK_0^{\mathrm{num}}(\bbP^1 \times \bbP^1)$, Perling's algorithm \cref{thm:perlings_algorithm} shows that a norm-minimal basis has the form $e_1, \dots, e_4$, where $e_1$ is of rank zero and $e_2, e_3$ and $e_4$ are of rank one. The contraction $G_{e_1}$ is isomorphic to $\sfK_0^{\mathrm{num}}(\bbP^2)$ with norm-minimal exceptional basis $e_2, e_3, e_4$. Since blow-up and blow-down are mutually inverse, $e_1$ results from blowing up a point $z= n \sfp \in \bbZ\sfp$.
	Observe that $\delta(G) = \delta(G_{e_1})=0$, so by \Cref{eq:degree_in_term_of_rank} the degree has to decrease by $1$ from $G$ to $G_{e_1}$. Thus by \Cref{lem:formula_degree_blow_up} $n=\pm 1$ and $\chi(\sigma, z)=\pm 1$. Possibly after changing the sign of $e_1$, the Gram matrix with respect to $e_1, \dots, e_4$ is $M_1$.
\end{proof}

The surface $X_2$ can be obtained from blowing up $\bbP^2$ in $2$ points or from blowing up $\bbP^1 \times \bbP^1$ in $1$ point. So a priori, there could potentially be two different types of norm-minimal exceptional bases. We compute that this is not the case.
\begin{proposition}\label{prop:blow_up_two_points_transitivity}
	Let $X_2$ be the blow-up of $\bbP^2$ in $2$ points and let $G_2\coloneqq \sfK_0 ^{\mathrm{num}}(X_2)$. Then any two exceptional bases are related by mutations up to sign and isometry. In particular, any norm-minimal exceptional basis is of norm $3$.
\end{proposition}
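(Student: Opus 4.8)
The plan is to run Perling's algorithm and then show that the norm-minimal basis of ``$\bbP^1\times\bbP^1$ shape'' always collapses onto the one of ``$\bbP^2$ shape''. By \cref{thm:perlings_algorithm}, every exceptional basis of $G_2$ is related by mutations and sign changes to a norm-minimal one in which the rank-zero vectors come first, as in \cref{prop:result_for_X_1}; since $\rk G_2=5$, such a basis consists either of two rank-zero and three rank-one elements (norm $3$) or of one rank-zero and four rank-one elements (norm $4$). I will show that no norm-minimal basis has norm $4$, so that every basis reduces to a norm-$3$ basis, and that every norm-$3$ basis is related to the basis with Gram matrix $M_2$ by mutations, signs, and an isometry.

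For a norm-$3$ basis $(e_1,e_2,c_1,c_2,c_3)$ I would iterate \cref{prop:result_for_X_1}. Contracting $e_1$ gives the basis $(e_2,c_1,c_2,c_3)$ of $(G_2)_{e_1}$, which by \cref{thm:classification} is isometric to $\sfK_0^{\mathrm{num}}(X_1)$, and contracting $e_2$ in turn yields $\sfK_0^{\mathrm{num}}(\bbP^2)$. Since the defect stays zero by \cref{lem:formula_defect_contraction}, the degree formula \cref{lem:formula_degree_blow_up} forces each of the two blow-down centres to be $\pm\sfp$; combined with \cref{prop:result_for_X_1} and \cref{prop:result_for_P2}, this identifies the Gram matrix with $M_2$ up to a sign change and an isometry.

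The heart of the matter, and the step I expect to be the main obstacle, is ruling out a norm-minimal basis of norm $4$. Suppose one exists, with rank-zero vector $f_0$ first and rank-one vectors $d_1,d_2,d_3,d_4$. Contracting $f_0$ leaves the all-rank-one basis $(d_1,d_2,d_3,d_4)$ of $(G_2)_{f_0}$, which is again norm-minimal there; by \cref{thm:classification} and \cref{prop:result_for_X_1} this forces $(G_2)_{f_0}\cong\sfK_0^{\mathrm{num}}(\bbP^1\times\bbP^1)$, since the norm-minimal bases of $\sfK_0^{\mathrm{num}}(X_1)$ contain a rank-zero vector, and by \cref{prop:result_for_P1_x_P1} its Gram matrix is $D_c$. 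Reducing $c$ to $0$ by the mutations of \cref{prop:result_for_P1_x_P1} (which lift to $G_2$ while fixing $f_0$), and using that the degree drops from $8$ to $7$, so that \cref{lem:formula_degree_blow_up} gives $z=\pm\sfp$ and hence $\chi(f_0,d_i)=1$ after adjusting the sign of $f_0$, I may assume the full Gram matrix is
\begin{equation*}
	N=\begin{pmatrix} 1 & 1 & 1 & 1 & 1 \\ 0 & 1 & 2 & 2 & 4 \\ 0 & 0 & 1 & 0 & 2 \\ 0 & 0 & 0 & 1 & 2 \\ 0 & 0 & 0 & 0 & 1 \end{pmatrix}.
\end{equation*}
The decisive computation is then short and explicit: applying $\sfR_{1,2}$ replaces $(f_0,d_1)$ by $(d_1,g)$ with $g\coloneqq f_0-d_1$ of rank $-1$ and $\chi(g,d_2)=\chi(g,d_3)=-1$, so that $\sfL_{2,3}$ and $\sfL_{3,4}$ successively produce the rank-zero vectors $d_2+g$ and $d_3+g$. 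The resulting basis has ranks $(1,0,0,-1,1)$, hence norm $3$, contradicting norm-minimality.

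Carried out purely by mutations and sign changes, this shows that every norm-minimal basis has norm $3$; since every basis reduces to a norm-minimal, hence norm-$3$, basis, and every norm-$3$ basis is related to the $M_2$-basis, all exceptional bases of $G_2$ are related by mutations up to signs and isometry. The essential difficulty is conceptual rather than computational: one must recognize that the two minimal models of $X_2$ coexist inside $G_2$ and exhibit the mutation sequence above, which realizes at the lattice level the quadratic Cremona transformation linking them; the degree-and-defect bookkeeping is then routine given \cref{lem:formula_degree_blow_up} and \cref{lem:formula_defect_contraction}.
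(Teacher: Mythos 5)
Your proof is correct and takes essentially the same route as the paper: Perling's algorithm to reduce to a norm-minimal basis, elimination of the norm-$4$ case by exhibiting an explicit norm-reducing mutation sequence after identifying the contraction with the $D_0$-form of $\sfK_0^{\mathrm{num}}(\bbP^1\times\bbP^1)$, and the same defect/degree bookkeeping via \cref{lem:formula_defect_contraction} and \cref{lem:formula_degree_blow_up} to pin the norm-$3$ case to $M_2$. The only difference is the explicit computation: your three mutations $\sfR_{1,2}$, $\sfL_{2,3}$, $\sfL_{3,4}$ (reaching ranks $(1,0,0,-1,1)$, norm $3$, via the rank $-1$ vector $g=f_0-d_1$) are a shorter, equally valid substitute for the paper's six-step sequence \cref{eq:mutations_hirzebruch_iso} (reaching ranks $(0,0,1,1,1)$), and both yield the identical contradiction to norm-minimality.
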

\begin{proof}
	We show that any exceptional basis can be mutated to an exceptional basis with Gram matrix $M_2$. Let $e_\bullet$ be an exceptional basis. Again with Perling's algorithm we mutate $e_\bullet$ to a norm-minimal basis $a_1, \dots, a_l, b_1 , \dots, b_m$ with $a_i$ of rank zero and $b_i$ of rank one. Now $m \in \{3,4\}$, since the (iterated) contraction of the rank zero elements yields a minimal geometric surface-like pseudolattice, which admits an exceptional basis; that implies it is isometric to $\sfK_0^{\mathrm{num}}(\bbP^2)$ or to $\sfK_0^{\mathrm{num}} (\bbP^1 \times \bbP^1)$.
	Assume for contradiction $m=4$. Then the contraction $G_{a_1}$ has Gram matrix 
	\begin{equation*}
		D_c\coloneqq \begin{pmatrix}
			1 & 2 & 2c+2 & 2c+4 \\
			0 & 1 & 2c & 2c+2 \\
			0 & 0 & 1 & 2 \\
			0 & 0 & 0 & 1
		\end{pmatrix}
	\end{equation*}
	with respect to $b_1, \dots, b_4$, see \cite[Ex.~3.7]{kuznetsov_exceptional_collections_in_surface_like_categories}. By \Cref{prop:result_for_P1_x_P1} we can assume that $c=0$.
	Moreover, as in the proof of \Cref{prop:result_for_X_1}, $a_1$ is obtained by blowing up $\sfK_0^{\mathrm{num}} (\bbP^1 \times \bbP^1)$ in $\pm \sfp$. After possibly changing the sign of $a_1$, we can assume $a_1$ results from blowing up $-\sfp$. Now we want to find a sequence of mutations, which reduces the norm of $(a_1, b_1, \dots, b_4)$. We compute:
	\begin{align}\label{eq:mutations_hirzebruch_iso}
		\begin{split}
			&\ (a_1, b_1, b_2, b_3, b_4) \\
			\xrightarrow{\sfL_{1,2}} &\ (-a_1+b_1, a_1, b_2, b_3, b_4)  \\
			\xrightarrow{\sfR_{2,3}} &\ (-a_1+b_1, b_2, a_1-b_2, b_3, b_4) \\
			\xrightarrow{\sfR_{3,4}} &\ (-a_1+b_1, b_2, b_3, a_1-b_2 -b_3, b_4)  \\
			\xrightarrow{\sfL_{1,2}} &\ (a_1-b_1+b_2, -a_1 +b_1, b_3, a_1-b_2 -b_3, b_4)  \\
			\xrightarrow{\sfL_{2,3}} &\ ( a_1-b_1+b_2, a_1-b_1+b_3, -a_1+b_1, a_1-b_2-b_3, b_4 )  \\
			\xrightarrow{\sfR_{4,5}} &\ (a_1-b_1+b_2, a_1-b_1+b_3, -a_1+b_1, b_4, a_1-b_2-b_3+3b_4).
		\end{split}
	\end{align}
	Since the rank map is additive one easily computes that the last basis is of rank $(0,0,1,1,1)$. But this contradicts the assumption that $(a_1, b_1, \dots, b_4)$ was norm-minimal. Thus $m=3$ and the exceptional basis $a_1, a_2, b_1, b_2, b_3$ results from blowing up $\sfK_0^{\mathrm{num}}(\bbP^2)$ in $2$ points $n_1\sfp$ and $n_2 \sfp$. After possibly changing signs, we can assume $n_1, n_2 \leq 0$.
	The fact that $G_2$ and $(G_2)_{a_1, a_2} = G_0$ have defect zero implies that also $(G_2)_{a_1}$ has defect zero, since contraction only increases the defect by \Cref{lem:formula_defect_contraction}. Therefore the degree has to increase by $1$ in each contraction and we have $n_1=n_2=-1$ by \Cref{lem:formula_degree_blow_up}. Hence, the Gram matrix with respect to to $a_1, a_2, b_1, b_2, b_3$ is $M_2$.
\end{proof}
\begin{remark}
	One can further compute the Gram matrix with respect to~$(a_1-b_1+b_2, a_1-b_1+b_3, -a_1+b_1, b_4, a_1-b_2-b_3+3b_4)$ as
	\begin{equation*}
		\begin{pmatrix}
			1 & 0 & -1 & -1 & -1 \\
			0 & 1 & -1 & -1 & -1 \\
			0 & 0 & 1 & 3 & 6   \\
			0 & 0 & 0 & 1 & 3  \\
			0 & 0 & 0 & 0 & 1
		\end{pmatrix},
	\end{equation*}
	but this will not be used subsequently.
\end{remark}

\begin{proof}[Proof of \Cref{thm:main_result}]
	We show the following statement: Given any exceptional basis $a_\bullet$ of $G_k$ we can find another exceptional basis related by mutations and sign changes to $a_\bullet$ such that the Gram matrix is of the form $M_k$ and the first $k$ basis elements have rank zero and the last $3$ have rank one. The unimodularity then ensures that the involved isometry preserves the surface-like structure given by $\sfp$, since the isometry respects the rank function. As we have treated the cases $k\leq 2$ by hand, we can assume $k>2$.	
	Given any exceptional basis of $G_k$ we can mutate the basis to a norm-minimal basis $$(a_1, \dots, a_l, b_1, \dots, b_m)$$ where the elements $a_i$ are of rank zero and the $b_i$ are of rank one and $m$ is equal to $3$ or $4$; see \cref{thm:perlings_algorithm}.
	Contracting the rank zero objects $a_i$, we obtain a minimal unimodular geometric surface-like pseudolattice $(G_k)_{a_1, \dots, a_l}$ admitting an exceptional basis. Thus the defect of $(G_k)_{a_1, \dots, a_l}$ is zero by \cite[Cor.~5.6]{kuznetsov_exceptional_collections_in_surface_like_categories}.
	Contraction of geometric pseudolattices only increases the defect, cf.~\Cref{lem:formula_defect_contraction}, thus all intermediate pseudolattices $(G_k)_{a_1, \dots, a_i}$ are unimodular geometric surface-like pseudolattices with defect zero and admit an exceptional basis. This implies that they are isometric to blow-ups of $\bbP^2$ as long as $k-i \geq 2$ by \Cref{thm:classification}.
	Choosing $i$ such that $k-i =2$, the pseudolattice $(G_k)_{a_1, \dots, a_i}$ is isometric to the blow-up of $2$ points in $(G_k)_{a_1, \dots, a_i}$. By \Cref{prop:blow_up_two_points_transitivity} any two exceptional bases of $(G_k)_{a_1, \dots, a_i}$ are related by mutations up to sign and we can mutate the exceptional basis to a basis of norm $3$.
	Now mutations in the contraction lift to mutations of $G_k$, which leave the contracted vectors invariant. Hence, $m=3$ and $l=k$. In particular, $(G_k)_{a_1, \dots, a_k}$ is isometric to $G_0$ and we may assume that $b_1,b_2, b_3$ have Gram matrix $M_0$.
	
	As seen in \Cref{se:blow_up_down}, blowing up and contracting are mutually inverse operations. Thus the basis $a_1, \dots, a_k, b_1, b_2, b_3$ is a basis obtained from blowing up $G_0$ in $k$ points.
	The point-like element of $G_0$ is unique up to sign, as discussed in \cite[Ex.~3.5]{kuznetsov_exceptional_collections_in_surface_like_categories}, hence we can assume $\sfp=b_3-2b_2+b_1$.
	In each intermediate step $(G_k)_{a_1, \dots, a_{i+1}}$ is obtained from $(G_k)_{a_1, \dots, a_i}$ by blowing up a point $n_{i+1}\sfp$ with $n_{i+1} \in \bbZ$.
	As each $(G_k)_{a_1, \dots, a_{j}}$ has defect zero we deduce $n_j = \pm 1$ for all $j$. Indeed, by \Cref{eq:degree_in_term_of_rank} the degree has to decrease by $-1$ in each step and \Cref{lem:formula_degree_blow_up} yields
	\begin{equation*}
		\deg ((G_k)_{a_1, \dots, a_{i}}) = \deg ((G_k)_{a_1, \dots, a_{i+1}})- \chi(\sigma, n_{i+1} \sfp )^2 = \deg ((G_k)_{a_1, \dots, a_{i+1}}) - n_{i+1}^2.
	\end{equation*}
	Up to possibly changing signs, we can arrange $\chi(a_i, b_j)=1$ for all $i, j$. Thus the Gram matrix has the desired form.
\end{proof}

\section{Blow-up of 9 Points}\label{se:9_points}
Full exceptional collections on del Pezzo surfaces were studied in \cite{kuleshov_orlov_exceptional_sheaves_on_del_pezzo_surfaces} and in \cite{elagin_lunts_on_full_exceptional_collections_of_line_bundles_on_del_pezzo_surfaces}. In \cite{elagin_xu_zhang_on_cyclic_strong_exceptional_collections_of_line_bundles_on_surfaces} and \cite{ishi_okawa_uehara_exceptional_collections_on_sigma_2} similar results for \emph{weak del Pezzo surfaces}, i.e.~surfaces with nef and big anticanonical divisor, were obtained. 
In this section, we expand the class of examples by considering the blow-up of $\bbP^2$ in $9$ points in very general position.
In this situation, we can assume that there is a unique cubic curve in $\bbP^2$ passing through each of the $9$ points with multiplicity $1$. Then the divisor class of the strict transform of this cubic coincides with the anticanonical divisor $-K_X=3H-\sum_{i=1}^9 E_i$ of the blow-up $X$. Here $H$ is the pullback of a hyperplane class in $\bbP^2$ and $E_i$ is the exceptional divisor corresponding to the blow-up of the point $p_i$.
Therefore, $-K_X$ is nef but not big as $(-K_X)^2=0$, so $X$ is not a weak del Pezzo surface. Additionally, $-K_X$ is not basepoint-free and for that reason the techniques developed in \cite{kuleshov_exceptional_and_rigid_sheaves_on_surfaces_with_anticaonical_class_without_base_components} cannot be applied. In this section we exclusively work over the field of complex numbers.

\subsection{Toric systems and numerically exceptional collections}
We recall the necessary terminology of toric systems as introduced by Hille and Perling in \cite[\S\S\,2-3]{hille_perling_exceptional_sequences_of_invertible_sheaves_on_rational_surfaces}.
\begin{definition}\label{def:toric_system}
	Let $X$ be a smooth projective surface. A sequence of divisors $A_1, \dots, A_n$ on $X$ is a \emph{toric system} if $n\geq 3$ and one has $A_i \cdot A_{i+1}=1=A_1 \cdot A_n$ for all $1\leq i \leq n-1$, $A_i \cdot A_j=0$ for $|i-j| >1$ except $\{i,j\}=\{1,n\}$, and $A_1 + \dots + A_n \sim_{\mathrm{lin}} -K_X$.
\end{definition}
If $\chi(\mcO_X)=1$ and $n=\rk \sfK_0(X)$, we have a 1:1-correspondence between toric systems on $X$ and numerically exceptional collections consisting of line bundles of length $n$ up to twists with line bundles:
\begin{equation*}
	\left\{  \begin{matrix}  \text{toric systems } (A_1, \dots, A_n)
		
	\end{matrix} \right\}  / \sim_{\mathrm{lin}} \:
	\leftrightarrow  \left\{  \begin{matrix}  \text{numerically exceptional collections }
		
		\\  \text{of line bundles } (\mcO_X(D_1), \dots, \mcO_X(D_n))
		
	\end{matrix} \right\} / \mathrm{Pic}(X)
\end{equation*}
A toric system $(A_1, \dots, A_n)$ and a choice of a divisor $D_1$ defines a numerically exceptional collection $(\mcO_X(D_1), \dots, \mcO_X(D_n))$ given by $$ D_{i+1}\coloneqq  D_1 + A_1 + \dots + A_i.$$
Conversely, any numerically exceptional collection of line bundles $(\mcO_X(D_1), \dots, \mcO_X(D_n))$ gives rise to a toric system via $$ A_i\coloneqq \begin{cases}
	D_{i+1}-D_i & \text{for}\; 1 \leq i \leq n-1, \\
	D_1-K_X-D_n &  \text{for} \; i =n.
\end{cases}$$

A toric system is called \emph{exceptional} if the corresponding collection of line bundles is exceptional. Equivalently, each divisor $A_i + \dots + A_j$ ($1\leq i\leq j\leq n-1$) is \emph{left-orthogonal} (a divisor $D$ is called left-orthogonal if $h^i(-D)=0$ for all $i$). Moreover, $(A_1, \dots, A_n)$ is an (exceptional) toric system if and only if $(A_2, \dots, A_n, A_1)$ is an (exceptional) toric system.

Orlov's blow-up formula for full exceptional collections can be transferred to toric systems via so-called \emph{augmentations}; see \cite[\S\,5]{hille_perling_exceptional_sequences_of_invertible_sheaves_on_rational_surfaces} and \cite[\S\,2.6]{elagin_lunts_on_full_exceptional_collections_of_line_bundles_on_del_pezzo_surfaces}: If $X'$ is a surface with toric system $A_1', \dots, A_n'$ and $p\colon X \to X'$ the blow-up of $X$ in a closed point $p\in X'$ with exceptional divisor $E\subseteq X$, denote by $A_i\coloneqq p^* A_i'$ the pullback of the divisors. We obtain a toric system on $X$, namely
$$E, A_1 -E, A_2, \dots, A_{n-1}, A_n-E.$$
This toric system and all its cyclic shifts are called \emph{augmentations}.
Conversely, a blow-down operation for toric systems can be defined.
\begin{proposition}[{\cite[Prop.~3.3]{elagin_lunts_on_full_exceptional_collections_of_line_bundles_on_del_pezzo_surfaces}}]
	Let $A_1, \dots, A_n$ be a toric system on a surface $X$ such that there exists an index $1\leq m \leq n$ with $A_m$ a $(-1)$-curve in $X$. Let $p\colon X \to X'$ be the blow-down of $A_m$. Then $A_1, \dots, A_n$ is an augmentation of a toric system $A_1', \dots, A_{n-1}'$ on $X'$.
\end{proposition}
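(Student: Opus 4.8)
The plan is to reduce to the case $m=1$ by cyclic invariance, then construct the toric system on $X'$ explicitly as the descent of suitably corrected lifts of the $A_i$, and finally recognize $(A_1,\dots,A_n)$ as its augmentation by direct comparison with the defining formula.

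First I would use the fact, recalled just above, that $(A_1,\dots,A_n)$ is a toric system if and only if its cyclic shift is; cyclically permuting, I may assume $A_1=E$, where $E$ is the exceptional curve of the blow-down $p\colon X\to X'$. Recall the standard facts about contracting a $(-1)$-curve: $\Pic(X)=p^{*}\Pic(X')\oplus\bbZ E$, the pullback $p^{*}$ preserves the intersection form, $p^{*}\Pic(X')=E^{\perp}$, and $K_X=p^{*}K_{X'}+E$. The toric-system relations with $A_1=E$ read $E\cdot A_2=E\cdot A_n=1$, $E\cdot A_j=0$ for $3\le j\le n-1$, and $E^2=-1$.

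Next I would set $B_1\coloneqq A_2+E$, $B_i\coloneqq A_{i+1}$ for $2\le i\le n-2$, and $B_{n-1}\coloneqq A_n+E$, and observe that each $B_i$ lies in $E^{\perp}$: the middle classes satisfy $E\cdot A_{i+1}=0$, while $(A_2+E)\cdot E=1-1=0$ and $(A_n+E)\cdot E=1-1=0$. Hence each $B_i=p^{*}A_i'$ for a unique class $A_i'\in\Pic(X')$, and these are the candidates for the downstairs toric system. Since $p^{*}$ is an isometry, I can verify the intersection axioms for $A_1',\dots,A_{n-1}'$ by computing intersections among the $B_i$ on $X$; by bilinearity these reduce to the known pairings of the $A_j$ and $E$, yielding $B_i\cdot B_{i+1}=1$, the wrap-around value $B_1\cdot B_{n-1}=(A_2+E)\cdot(A_n+E)=0+1+1-1=1$, and $B_i\cdot B_j=0$ otherwise. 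For the anticanonical condition I would sum: $\sum_i B_i=(A_2+\dots+A_n)+2E=(-K_X-E)+2E=-K_X+E=-p^{*}K_{X'}$, using $K_X=p^{*}K_{X'}+E$, and injectivity of $p^{*}$ then gives $\sum_i A_i'\sim_{\mathrm{lin}}-K_{X'}$.

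Finally, comparing $B_1=A_2+E$, the middle terms $B_i=A_{i+1}$, and $B_{n-1}=A_n+E$ with the defining formula $E,\,p^{*}A_1'-E,\,p^{*}A_2',\dots,p^{*}A_{n-2}',\,p^{*}A_{n-1}'-E$ for an augmentation shows at once that $(A_1,\dots,A_n)=(E,A_2,\dots,A_n)$ is the augmentation of the toric system $(A_1',\dots,A_{n-1}')$; undoing the cyclic shift finishes the proof. I expect the only genuine care to be bookkeeping — tracking the wrap-around pairs $\{1,n\}$ in the intersection conditions and checking the small-$n$ boundary cases so that the adjacency pattern of the $B_i$ is exactly that of a length-$(n-1)$ toric system — together with the point that $A_m$ being an honest $(-1)$-\emph{curve}, rather than merely a class of self-intersection $-1$, is what legitimizes the contraction $p$ and the relation $K_X=p^{*}K_{X'}+E$. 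Once these relations are in place, the descent of the $B_i$ and the identification of the intersection matrix with the standard one is purely linear-algebraic.
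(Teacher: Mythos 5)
Your proof is correct and is essentially the argument of Elagin--Lunts that the paper cites for this proposition (the paper itself gives no proof): cyclically shift so that $A_1=E$, descend the corrected classes $A_2+E,\,A_3,\dots,A_{n-1},\,A_n+E$ through the isometric identification $p^{*}\Pic(X')=E^{\perp}$ using $K_X=p^{*}K_{X'}+E$, check the intersection pattern and the anticanonical sum, and match the result against the augmentation formula; all of your computations, including the wrap-around pairing $(A_2+E)\cdot(A_n+E)=1$, are right for $n\ge 4$. The one boundary case you flag but do not resolve, $n=3$, is genuinely degenerate (a length-$2$ toric system is excluded by the definition, and e.g.\ $(E,\,H-E,\,2H-E)$ on the one-point blow-up of $\bbP^2$ is a length-$3$ toric system containing a $(-1)$-curve), so the statement is to be read in its intended context of maximal-length systems, where $n=\rk\sfK_0^{\mathrm{num}}(X)\ge 4$ holds automatically whenever $X$ contains a $(-1)$-curve.
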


An essential observation for the proof of \cref{thm:main_result_2} is that \cite[Lem.~3.4]{elagin_lunts_on_full_exceptional_collections_of_line_bundles_on_del_pezzo_surfaces} generalizes to the blow-up of $\bbP^2$ in $9$ points:
\begin{lemma}\label{lem:left_orth_divisor_-1}
	Let $X$ be the blow-up of $\bbP^2$ in $9$ points in very general position. Then any divisor $D$ with $D^2=-1$ and $\chi(D)=1$ is linearly equivalent to a $(-1)$-curve.
\end{lemma}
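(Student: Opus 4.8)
The plan is to convert the two hypotheses into the intersection-theoretic signature of a $(-1)$-curve and then to rule out any ``extra'' effective components, where the hypothesis on the position of the points enters. Since $X$ is rational we have $\chi(\mcO_X)=1$ and $K_X^2=(-K_X)^2=0$, so Riemann--Roch reads
\[
\chi(D)=\chi(\mcO_X)+\tfrac12\,D\cdot(D-K_X)=1+\tfrac12\,(D^2-D\cdot K_X).
\]
Thus $D^2=-1$ together with $\chi(D)=1$ forces $D\cdot K_X=-1$, i.e.\ $D\cdot(-K_X)=1$. Hence $D$ already carries the invariants $D^2=-1$, $D\cdot K_X=-1$ of a $(-1)$-curve, and the task is to realize it by an actual irreducible curve.

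The first genuine step is to show that $D$ is \emph{effective}. Here I would use that for $9$ very general points the anticanonical class $-K_X$ is nef with $(-K_X)^2=0$, which is the geometric input furnished by Nagata \cite{nagata_on_rationl_surface_ii}. If $D$ were not effective, then $h^0(D)=0$ and $\chi(D)=1$ give $h^2(D)\ge 1$, so by Serre duality $K_X-D$ is effective; but $(K_X-D)\cdot(-K_X)=-(-K_X)^2+D\cdot K_X=-1<0$ contradicts the nefness of $-K_X$. Therefore $D$ is effective, and I may write $D=\sum_i a_i C_i$ as a positive combination of irreducible curves.

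Because $-K_X$ is nef, each $C_i\cdot(-K_X)\ge 0$, and $\sum_i a_i\,(C_i\cdot(-K_X))=D\cdot(-K_X)=1$ forces exactly one component, say $C_1$, to have $a_1=1$ and $C_1\cdot(-K_X)=1$, while every remaining component is orthogonal to $-K_X$. The crux is to control these orthogonal components. For an irreducible curve $C$ with $C\cdot(-K_X)=0$, adjunction gives $C^2=2p_a(C)-2\ge -2$, and the Hodge index theorem — the intersection form is negative semi-definite on $(-K_X)^\perp$ with radical $\bbQ(-K_X)$, as $-K_X$ is a nonzero isotropic class in the closed positive cone — gives $C^2\le 0$, so $C^2\in\{-2,0\}$. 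Very general position (again Nagata) excludes $(-2)$-curves, so $C^2=0$, whence $[C]\in\bbQ(-K_X)$ is a multiple of $-K_X$. Consequently the orthogonal part $F\coloneqq D-C_1$ has class $m(-K_X)$ for some integer $m\ge 0$.

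Finally I would close with a numerical computation: since $(-K_X)^2=0$ and $C_1\cdot(-K_X)=1$,
\[
-1=D^2=(C_1+m(-K_X))^2=C_1^2+2m,
\]
so $C_1^2=-1-2m$, while adjunction gives $C_1^2=2p_a(C_1)-1\ge -1$; hence $m\le 0$, forcing $m=0$. Thus $D=C_1$ is an irreducible curve with $C_1^2=-1$ and $C_1\cdot K_X=-1$, i.e.\ a $(-1)$-curve, as claimed. \textbf{The main obstacle} is exactly the classification of the components orthogonal to $-K_X$: in the del Pezzo case $-K_X$ is ample and the finitely many $(-1)$-classes form a single Weyl orbit that can be handled directly, whereas here $-K_X$ is only nef with self-intersection zero, so one genuinely needs the very-general-position hypothesis (through Nagata's theorem) both to guarantee nefness of $-K_X$ and to exclude $(-2)$-curves.
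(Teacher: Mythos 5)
Your proof is correct and follows essentially the same route as the paper's: Riemann--Roch forces $-K_X\cdot D=1$, effectivity comes from $h^2(D)=h^0(K_X-D)=0$ via nefness of $-K_X$, exactly one component meets $-K_X$ (with coefficient $1$), and the components in $K_X^\perp$ are killed by excluding negative curves in very general position together with the Hodge index theorem. The only cosmetic differences are that you use adjunction (evenness of $C^2$ on $K_X^\perp$, and $C_1^2\ge -1$) where the paper invokes \cite[Prop.~2.3]{de_fernex_negative_curves_on_very_general_blow_ups_of_p2} stating that every integral curve of negative self-intersection is a $(-1)$-curve, and you finish with the identity $-1=C_1^2+2m$ rather than the paper's cross-term positivity argument --- both versions rest on the identical geometric input.
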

\begin{proof}
	First of all, Riemann--Roch yields $\chi(D)=1+\frac{1}{2}(D(D-K_X))$. As $D^2=-1$ we have $-K_XD=1$.
	Now $\chi(D)=h^0(D)-h^1(D)+h^2(D)$ and $h^2(D)=h^0(K_X-D)$ by Serre duality. The intersection $-K_X(K_X-D)=K_XD=-1$ implies that $K_X-D$ is not effective, since $-K_X$ is nef. Therefore $h^2(D)=h^0(K_X-D)=0$ and in order for $\chi(D)=1$ to be fulfilled, $D$ must have at least one nontrivial global section, i.e.~$D$ must be effective.
	We write $D=\sum_i k_i C_i$, where the $C_i$ are pairwise distinct integral curves in $X$ and the $k_i$ are positive integers.
	From the equation $1=-K_XD=\sum_ik_i (-K_X)C_i$ and the nefness of $-K_X$ we derive that among the curves $C_i$ there is one $C_0$ occurring with coefficient $1$ and satisfying $-K_XC_0=1$. All other $C_i$ lie in $K_X^\perp$. 
	Note that by \cite[Prop.~2.3]{de_fernex_negative_curves_on_very_general_blow_ups_of_p2} any integral curve with negative self-intersection is a $(-1)$-curve. Therefore no curve in $K_X^\perp$ can have negative self-intersection, as for $(-1)$-curves the intersection with the canonical class is nonzero.
	Hence, in order to achieve $-1=D^2$ we must have $C_0^2=-1$, $C_i^2=0$ and $C_0C_i=0$ for all $i\neq 0$. Let $A\coloneqq D-C_0$. Then $A \in K_X^\perp$ and $A^2=0$. But this implies $A=nK_X$, since any isotropic vector in $K_X^\perp$ is a multiple of $K_X$. Now $C_0K_X=-1$ together with $C_0 A=0$ implies $n=0$ and hence $C_0=D$.
\end{proof}
\begin{remark}[On the position of the blown up points]\label{rmk:pos_of_9_points}
	The position of the $9$ blown up points is important for only two facts: On the one hand we need to choose the position general enough so that there exists a unique cubic passing through the $9$ points with multiplicity $1$ and on the other hand in the proof of \cref{lem:left_orth_divisor_-1} we use the result of \cite{de_fernex_negative_curves_on_very_general_blow_ups_of_p2} which depends on the position of the points. For the latter the assumptions are made more precise in \cite[Def.~2.1]{de_fernex_negative_curves_on_very_general_blow_ups_of_p2}.
	Alternatively one can replace \cite[Prop.~2.3]{de_fernex_negative_curves_on_very_general_blow_ups_of_p2} by \cite[Prop.~12]{nagata_on_rationl_surface_ii} and assume that the points are in a position described in \cite[Prop.~9]{nagata_on_rationl_surface_ii} to ensure that the surface carries no integral curve $C$ with $C^2\leq -2$.
\end{remark}
\begin{remark}
	We will see in \cref{sec:blow_up_10_points} that the conclusion of \cref{lem:left_orth_divisor_-1} does not hold for blow-ups of $10$ or more points.
\end{remark}

\subsection{Roots in the Picard lattice}\label{rmk:weyl_group}
	Recall that any unimodular lattice $\Lambda$ contains a root system with roots given by the elements $\alpha \in \Lambda$ such that $\alpha^2=\pm 1$ or $\alpha^2=\pm 2$. For such a root $\alpha \in \Lambda$ the reflection along $\alpha ^\perp$ is given by
	$$s_\alpha (x)\coloneqq  x - 2\frac{(x \cdot \alpha)}{\alpha^2}\alpha.$$
	Any such reflection is an orthogonal transformation of $\Lambda$.
		
	Let $X$ be the blow-up of $\bbP^2$ in $n\geq 3$ points. Let $H$ be the pullback of a hyperplane class and let $E_1, \dots, E_n$ be the exceptional divisors. Then $H, E_1, \dots, E_n$ is an orthogonal basis of the Picard lattice $\Pic(X)$ such that $H^2=1$ and $E_i^2=-1$.
	The elements $\alpha_1\coloneqq E_1 - E_2, \dots , \alpha_{n-1}\coloneqq E_{n-1} - E_n$ and $\alpha_0\coloneqq H-E_1-E_2-E_3$ are roots in $\Pic(X)$ and we denote by $W_X$ the reflection group generated by $s_{\alpha_i}$, $0\leq i\leq n-1$.
	All roots $\alpha_i$ lie in $K_X^\perp$, thus
	$$W_X \subseteq \Orth(\Pic(X))_{K_X} \subseteq \Orth(\Pic(X)),$$
	where $\Orth(\Pic(X))_{K_X}$ is the stabilizer of the canonical class $K_X = -3H+\sum_i E_i$.
	\begin{lemma}\label{lem:weyl_group_is_stabilizer}
		Let $X$ be the blow-up of $\bbP^2$ in $n$ points, where $3 \leq n \leq 9$. Then the reflection group $W_X=\langle s_{\alpha_0}, \dots, s_{\alpha_{n-1}} \rangle$ equals the stabilizer $\Orth(\Pic(X))_{K_X}$.
	\end{lemma}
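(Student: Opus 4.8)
The forward inclusion $W_X \subseteq \Orth(\Pic(X))_{K_X}$ having been recorded above, I would prove the reverse inclusion by exploiting the rigidity of ordered orthogonal bases. Call an ordered tuple $(\ell; \eps_1, \dots, \eps_n)$ a \emph{marking} if it has the same intersection numbers as the standard basis $(H; E_1, \dots, E_n)$, that is $\ell^2 = 1$, $\eps_i^2 = -1$, the vectors are pairwise orthogonal, and $-3\ell + \sum_i \eps_i = K_X$. If $\phi \in \Orth(\Pic(X))_{K_X}$, then $(\phi(H); \phi(E_1), \dots, \phi(E_n))$ is again a marking, because $\phi$ preserves the intersection form and fixes $K_X$. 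I would then reduce the lemma to the assertion that $W_X$ \emph{acts transitively on the set of markings}: granting this, some $w \in W_X$ satisfies $w(\phi(H)) = H$ and $w(\phi(E_i)) = E_i$ for all $i$, so that $w\phi$ fixes a basis of $\Pic(X)$ and hence $w\phi = \id$; therefore $\phi = w^{-1} \in W_X$.

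To prove transitivity on markings I would argue in two steps. First, set $\mcE \coloneqq \{\, e \in \Pic(X) : e^2 = -1,\ e \cdot K_X = -1 \,\}$; every entry $\eps_i$ of a marking lies in $\mcE$. I claim $W_X$ acts transitively on $\mcE$. Writing $e = dH - \sum_i m_i E_i$, the two defining conditions read $\sum_i m_i^2 = d^2 + 1$ and $\sum_i m_i = 3d - 1$, and a short estimate (Cauchy--Schwarz together with integrality) forces $d \ge 0$, with $d = 0$ exactly for $e \in \{E_1, \dots, E_n\}$. The transpositions $s_{\alpha_i}$ with $i \ge 1$ generate the symmetric group permuting $E_1, \dots, E_n$, so these lie in a single orbit. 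For $d \ge 1$, after sorting the $m_i$ into decreasing order by such a permutation one has $m_1 + m_2 + m_3 > d$, i.e.\ $e \cdot \alpha_0 < 0$, and then $s_{\alpha_0}(e)$ has strictly smaller degree $2d - (m_1 + m_2 + m_3)$. Iterating yields a strictly decreasing sequence of nonnegative degrees, which must terminate at degree $0$; hence every class of $\mcE$ is $W_X$-equivalent to some $E_i$, proving transitivity.

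Second, I would establish transitivity on markings by induction on $n$, the base case $n = 3$ being a direct finite verification. Given a marking $(\ell; \eps_1, \dots, \eps_n)$, transitivity on $\mcE$ yields $w \in W_X$ with $w(\eps_n) = E_n$. Since $\ell$ and $\eps_1, \dots, \eps_{n-1}$ are orthogonal to $\eps_n$, their $w$-images lie in $E_n^\perp$, which is isometric to $\Pic(X_{n-1})$ with canonical class $K_X - E_n = K_{X_{n-1}}$, and there they form a marking. The reflections $s_{\alpha_0}, \dots, s_{\alpha_{n-2}}$ all fix $E_n$ and generate a copy of $W_{X_{n-1}}$ inside $W_X$, so the inductive hypothesis furnishes $w' \in W_{X_{n-1}} \subseteq W_X$ carrying this reduced marking to $(H; E_1, \dots, E_{n-1})$. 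Then $w'w$ takes the original marking to the standard one, completing the induction and hence the proof.

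The crux of the argument, and the sole place the bound $n \le 9$ intervenes, is the termination in the first step: the inequality $m_1 + m_2 + m_3 > d$ for every $e \in \mcE$ with $d \ge 1$. For $n \le 8$ the anticanonical class is strictly positive, $\mcE$ is a finite set, and termination is automatic; the delicate case is $n = 9$, where $(-K_X)^2 = 0$, the relevant root system is the infinite affine $E_8$ system, and $\mcE$ is infinite. I expect this quadratic-form inequality --- which is independent of the position of the blown-up points --- to be the main technical obstacle, to be proved by combining $\sum_i m_i = 3d - 1$ and $\sum_i m_i^2 = d^2 + 1$ with the ordering $m_1 \ge \dots \ge m_n$ via an averaging estimate on $m_4, \dots, m_n$; the slack in this estimate is nonnegative precisely for $n \le 9$ and fails for $n \ge 10$, matching the breakdown recorded later for $10$ points.
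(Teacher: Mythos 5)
Your architecture is correct, and it is a genuinely different route from the paper's. The paper's proof is geometric: it notes the equality is independent of the position of the points, specializes to very general position, uses \cref{lem:left_orth_divisor_-1} (which rests on de Fernex's strengthening of Nagata's theorem that such blow-ups carry no integral curves of self-intersection $\le -2$) to conclude that every element of $\Orth(\Pic(X))_{K_X}$ permutes the classes of $(-1)$-\emph{curves}, and then quotes Harbourne's Theorem~0.1 to identify such isometries with elements of $W_X$. Your argument --- Cremona descent on $(-1)$-classes plus induction on markings, in the spirit of Manin's treatment of del Pezzo surfaces --- is purely lattice-theoretic and never invokes the geometry of $X$; this is a real gain, since the paper itself points out (in the introduction) that although the equality is position-independent, its own argument ``relies on a result of Nagata which uses the actual geometry of $X$.'' The reduction to transitivity on markings, the degree-descent via $s_{\alpha_0}$, and the induction through $E_n^\perp \cong \Pic(X_{n-1})$ with $K_X - E_n \mapsto K_{X_{n-1}}$ are all sound.

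Two caveats. First, your remark that for $n \le 8$ ``termination is automatic'' from finiteness of $\mcE$ is not right as stated: finiteness alone does not guarantee the descent reaches degree $0$; you need the inequality $m_1+m_2+m_3 > d$ there too (it holds --- this is classical Noether). Second, you leave the crux inequality for $n = 9$ as an expectation; note that plain Cauchy--Schwarz on the tail is \emph{not} enough (minimizing $s^2/3 + (3d-1-s)^2/6$ over $s \le d$ gives only $(d-\tfrac13)^2 \le d^2+1$, which holds for all $d$), but the argument does close once you keep the order constraint $m_i \le m_3$ for $i \ge 4$. Indeed, first rule out $m_3 < 0$ (else $m_1 + m_2 \ge 3d-1$ while $m_1^2+m_2^2 \le d^2+1$, impossible for $d \ge 1$), so after padding with zeros the tail has at most six entries, each at most $m_3$. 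If $s \coloneqq m_1+m_2+m_3 \le d$, then $2d-1 \le 3d-1-s = \sum_{i\ge4} m_i \le 6m_3 \le 2s \le 2d$, forcing $s = d$ by integrality; then $m_3 \ge (2d-1)/6$ together with $m_3 \le d/3$ forces $3 \mid d$ and $m_1 = m_2 = m_3 = d/3 \eqqcolon k$, and the tail must be five copies of $k$ and one $k-1$; but then $\sum_i m_i^2 = 9k^2 - 2k + 1 = d^2+1$ gives $k = 0$, contradicting $d \ge 1$. With seven tail entries ($n = 10$) the chain $\sum_{i\ge4} m_i \le 7m_3 \le 7d/3$ no longer contradicts $2d-1$, and already the step $d \ge 0$ fails (the paper's classes $D_i$ have $d = -6$), consistent with \cref{lem:weil_group_not_stabilizer_10_points}. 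With these two repairs your proof is complete and, unlike the paper's, self-contained on the lattice level.
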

	\begin{proof}
		First note that the equality $W_X = \Orth(\Pic(X))_{K_X}$ does not depend on the position of points.
		Thus, we can assume that the points lie in very general position and $-K_X$ is class of an irreducible reduced curve in $X$.
		Then \cref{lem:left_orth_divisor_-1} (or \cite[Lem.~3.4]{elagin_lunts_on_full_exceptional_collections_of_line_bundles_on_del_pezzo_surfaces} if $n\leq 8$) implies that any orthogonal transformation in $\Orth(\Pic(X))_{K_X}$ maps $(-1)$-curves to $(-1)$-curves.
		Hence, by \cite[Thm.~0.1]{harbourne_blowings_up_of_p2_and_their_blowings_down}, which is essentially a reformulation of results in \cite{nagata_on_rationl_surface_ii}, any such transformation is an element of $W_X$ and thus the lemma holds.
	\end{proof}

\subsection{A weak del Pezzo surface admitting a numerically exceptional collection of maximal length which is not exceptional}\label{se:example_not_full_exceptional}
We cannot expect that the conclusion of \cref{lem:left_orth_divisor_-1} holds true for rational surface of higher Picard rank, as we show in \cref{sec:blow_up_10_points}. But already if we blow up less than $9$ points in special position, the conclusion of \cref{lem:left_orth_divisor_-1} does not hold. As a consequence, in general a maximal numerically exceptional collection does not need to be exceptional. In \cref{ex:special_position} we construct such an example by blowing up $8$ points in a special position. Similar examples were already obtained for Hirzebruch surfaces $\Sigma_d$ with even $d$ in \cite[Rmk.~2.18]{elagin_lunts_on_full_exceptional_collections_of_line_bundles_on_del_pezzo_surfaces}.
\begin{proposition}\label{ex:special_position}
	Let $\pi \colon X \to \bbP^2$ be the blow-up of $8$ points $p_1, \dots, p_8$ such that $p_1, p_2, p_3$ lie on a line $L$ and $p_4, \dots, p_8$ on a smooth irreducible conic curve $C$ such that $p_1, p_2, p_3\notin C$ and $p_4, \dots, p_8 \notin L$. Then $X$ is a weak del Pezzo surface, i.e.~$-K_X$ is nef and big, but admits a maximal numerically exceptional collection consisting of line bundles which is not exceptional. Moreover, $X$ admits an effective divisor $D$ satisfying $D^2=-1$, $\chi(D)=1$ and $H^1(X, \mcO_X(-D))\neq 0$.
\end{proposition}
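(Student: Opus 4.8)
The plan is to establish the three assertions in turn, with the divisor $D$ as the central object.

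First I would check that $X$ is a weak del Pezzo surface. Writing $\Pic(X)=\langle H,E_1,\dots,E_8\rangle$ with $K_X=-3H+\sum_{i=1}^{8}E_i$, one has $(-K_X)^2=9-8=1>0$, so it suffices to prove that $-K_X$ is nef; bigness is then automatic, a nef divisor of positive self-intersection on a surface being big. The anticanonical class is effective (its members are the strict transforms of the plane cubics through the eight points, which form a pencil, so $h^0(-K_X)=2$). For nefness I would show $-K_X\cdot C\ge0$ for every irreducible curve $C$: the hypotheses that no four points are collinear and no seven lie on a conic force, by a genus and multiplicity count, that $X$ carries no irreducible curve of self-intersection $\le-3$, and the negative curves produced by the special position are the $(-2)$-curve $\tilde L=H-E_1-E_2-E_3$ (with $-K_X\cdot\tilde L=0$), the $(-1)$-curve $2H-E_4-\dots-E_8$ (the strict transform of the conic, of anticanonical degree $1$), and the usual $(-1)$-curves, all of nonnegative anticanonical degree. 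Hence $-K_X$ is nef and $X$ is a weak del Pezzo surface of degree $1$.

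Second I would construct the divisor. Set $D\coloneqq-K_X+\tilde L=4H-2E_1-2E_2-2E_3-E_4-\dots-E_8$. As a sum of two effective classes it is effective, and $D^2=(-K_X)^2+2(-K_X\cdot\tilde L)+\tilde L^2=1+0-2=-1$, while $-K_X\cdot D=1$; Riemann--Roch \cref{eq:Riemann_Roch} then gives $\chi(D)=1+\tfrac12D(D-K_X)=1$. The point is the nonvanishing of $H^1$. Since $-K_X\cdot(-D)=-1<0$ and $-K_X$ is nef, $-D$ is not effective, so $h^0(-D)=0$; and the numerics above give $\chi(-D)=0$. By Serre duality $h^2(-D)=h^0(K_X+D)=h^0(\tilde L)$, and $\tilde L$, being an irreducible curve of negative self-intersection, is rigid, so $h^0(\tilde L)=1$. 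Therefore $h^1(-D)=h^0(-D)+h^2(-D)-\chi(-D)=1\neq0$. This already exhibits the failure of \cref{lem:left_orth_divisor_-1} on $X$: $D$ is an effective numerical $(-1)$-class which is not linearly equivalent to a $(-1)$-curve, since it is not left-orthogonal.

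Third, for the collection I would realise $X$ as a blow-up of the Hirzebruch surface $\Sigma_2$ and transport a Hirzebruch example. The pencil of lines through $p_4$ induces a conic bundle $X\to\bbP^1$ on which $\tilde L$ is a section with $\tilde L^2=-2$; contracting the seven disjoint $(-1)$-curves $H-E_4-E_1,\,H-E_4-E_2,\,H-E_4-E_3,\,E_5,\dots,E_8$, all disjoint from $\tilde L$, yields a smooth rational surface of Picard rank $2$ with $K^2=8$ carrying a $(-2)$-curve, hence $\Sigma_2$, with $\tilde L$ the negative section. On $\Sigma_2$, with $C_0$ the negative section and $f$ the fibre class, the toric system $(C_0+f,\,C_0+2f,\,C_0+f,\,-C_0)$ (cf.\ \cite[Rmk.~2.18]{elagin_lunts_on_full_exceptional_collections_of_line_bundles_on_del_pezzo_surfaces}) gives a maximal numerically exceptional collection of line bundles which is not exceptional: its partial sum $P=3C_0+4f$ has $\chi(-P)=0$ and $H^0(\Sigma_2,\mcO(-P))=0$, while $H^1(\Sigma_2,\mcO(-P))\cong H^2(\Sigma_2,\mcO(-P))\cong H^0(\Sigma_2,\mcO(C_0))\neq0$ since $K_{\Sigma_2}+P=C_0$ is the rigid $(-2)$-curve. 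Augmenting this system along the seven blow-ups produces a maximal numerically exceptional collection of line bundles on $X$; the augmentation preserves the partial sum $\pi^{*}P$, and $H^1(X,\mcO_X(-\pi^{*}P))\cong H^1(\Sigma_2,\mcO(-P))\neq0$ because $R\pi_{*}\mcO_X=\mcO_{\Sigma_2}$, so this collection is again not exceptional, via the toric-system dictionary recalled before \cref{def:toric_system}.

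The main obstacle is the third step, specifically the identification $X\cong\Bl_7\Sigma_2$ with $\tilde L$ as negative section. It is exactly the special position of the points—the collinearity of $p_1,p_2,p_3$, which produces the $(-2)$-curve $\tilde L$—that makes this contraction, and hence the whole phenomenon, possible: for points in general position $X$ would be a genuine degree-one del Pezzo surface with no $(-2)$-curve, on which every numerically exceptional collection of line bundles is exceptional by Elagin--Lunts. One must verify the conic-bundle structure and the contraction, fix the non-exceptional toric system on $\Sigma_2$, and check that augmentation leaves the offending partial sum $\pi^{*}P$ intact; these are routine once the Hirzebruch model is in place, and conceptually they all reduce to the nonvanishing $H^1$ computed above, of which $H^1(X,\mcO_X(-D))\neq0$ is the cleanest instance.
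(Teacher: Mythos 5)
Your first two steps essentially coincide with the paper's, with some local improvements: you take the same divisor $D=4H-2E_1-2E_2-2E_3-E_4-\dots-E_8$, and writing it as $D=-K_X+\tilde L=2\tilde L+\tilde C$ makes effectivity immediate, whereas the paper re-runs the effectivity argument of \cref{lem:left_orth_divisor_-1} and separately shows via B\'ezout that $D$ is not irreducible; your observation that $h^1(-D)\neq 0$ alone rules out $D$ being a $(-1)$-curve is a clean substitute. One caution on nefness: ruling out irreducible curves with $C^2\le -3$ ``by a genus and multiplicity count'' is completable (a $p_a\ge 0$ plus Cauchy--Schwarz estimate confines such curves to degree $\le 2$, excluded since no four points are collinear and no seven lie on a conic), but the standard adjunction shortcut presupposes nefness, so you must use the direct count; the paper's argument is quicker: any irreducible curve negative against $-K_X$ is a component of the member $\tilde L+\tilde C\in|-K_X|$. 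For the collection itself you take a genuinely different route. The paper stays lattice-theoretic: $K_X^\perp$ is the $E_8$-lattice, its Weyl group acts transitively on roots, so some $T\in\Orth(\Pic(X))_{K_X}$ sends the root $K_X+E_1$ to $H-E_1-E_2-E_3$, hence $E_1\mapsto D$, and applying $T$ to the standard toric system yields the desired collection, non-exceptional precisely because $h^1(-D)\neq 0$. You instead build a model $X\to\Sigma_2$ and augment the non-exceptional toric system $(C_0+f,\,C_0+2f,\,C_0+f,\,-C_0)$; your computations there ($\chi(-P)=0$, $h^1(-P)=h^0(C_0)=1$, persistence of the partial sum $\pi^*P$ under augmentations at the front, and $R\pi_*\mcO_X=\mcO_{\Sigma_2}$) all check out. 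What the paper's route buys is uniformity: it needs no geometric model and works verbatim for every admissible configuration; your route is more geometric and exhibits the collection concretely, and it directly mirrors the Hirzebruch examples of Elagin--Lunts it cites.

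There is, however, a genuine gap in the third step: the claim that $H-E_4-E_1$, $H-E_4-E_2$, $H-E_4-E_3$, $E_5,\dots,E_8$ are disjoint $(-1)$-\emph{curves}. The hypotheses of the proposition impose only the stated incidences; they do not prevent, say, the line through $p_4$ and $p_1$ from also passing through $p_5$ (a secant of $C$ may perfectly well meet $L$ in one of $p_1,p_2,p_3$). In such a configuration the class $H-E_4-E_1$ is represented only by the reducible divisor $(H-E_1-E_4-E_5)+E_5$, it is not an irreducible curve, and the simultaneous Castelnuovo contraction you describe does not exist; so as written your proof covers only configurations that are generic within the hypotheses, while the statement (and the paper's Weyl-group argument) covers all of them. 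The gap is repairable: work fiberwise in the conic bundle $|H-E_4|$, of which $\tilde L$ is a section meeting each fiber in exactly one component; contracting in each reducible fiber all components not meeting $\tilde L$ (possibly in stages, corresponding to infinitely near points) produces a relatively minimal ruled surface carrying the image of $\tilde L$ as an irreducible section of square $-2$, which forces $\Sigma_2$, and the augmentation formalism accommodates blow-ups at infinitely near points. With that repair, and with the nefness count made explicit, your argument is a valid alternative proof.
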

\begin{proof}
	Denote by $E_i$ the exceptional divisor corresponding to the blow-up of $p_i$ and let $H$ be the pullback of the hyperplane class in $\bbP^2$. Then the anticanonical divisor satisfies $-K_X=3H-\sum_{i=1}^{8}E_i$ and thus is equal to the sum of the strict transform $\tilde{L}$ of $L$ and the strict transform $\tilde{C}$ of $C$. Hence, the intersection of $-K_X$ with any other curve is non-negative and one checks that $-K_X\tilde{L}=0$ is zero and $-K_X\tilde{C}=1$. Therefore $-K_X$ is nef and hence $(-K_X)^2=1>0$ implies that $-K_X$ is big.
	Consider the divisor $$D\coloneqq 4H-2E_1-2E_2-2E_3-E_4-\dots -E_8,$$ which satisfies $D^2=-1$ and $-K_XD=1$.
	Arguing as in the proof of \cref{lem:left_orth_divisor_-1} one observes that $D$ is effective. But $D$ is not irreducible, which can be seen as follows: Assume for contradiction that $D$ is irreducible. Since $D$ is not one of the exceptional divisors $E_i$, $D$ must be the strict transform of a curve $B$ in $\bbP^2$.
	Now by B\'{e}zout's theorem $B\cdot L=\deg(B)\deg(L)=\deg(B)$ or $B=L$. The latter cannot occur since $\tilde{L}$ has class $H-E_1-E_2-E_3 \neq D$.
	By the explicit form of $D$, we must have $\deg(B)=4$ and the multiplicity of $B$ at $p_i$ must be $2$ for $i=1,2,3$. Hence, $B\cdot L\geq 6$, which contradicts to $B\cdot L=\deg(B)$. Thus, $D$ cannot be irreducible.
	Further we compute $h^1(-D)=1$: Riemann--Roch yields $\chi(-D)=0$ and since $D$ is effective, $-D$ admits no global sections. This gives $h^1(-D)=h^2(-D)=h^0(K_X+D)=h^0(H-E_1-E_2-E_3)=1$. Thus the conclusion of \cref{lem:left_orth_divisor_-1} does not hold for $D$.
	
	Finally, we complete $D$ into a toric system in order to obtain a maximal numerically exceptional collection consisting of line bundles.
	The set of orthogonal transformations of $\Pic(X)$ fixing the canonical class $K_X$ coincides with the orthogonal group of $K_X^\perp$. A computation shows that $K_X^\perp$ identifies with the $E_8$-lattice and therefore the orthogonal group is the Weyl group of $E_8$. It is known that the Weyl group acts transitively on the set of roots; see, e.g., \cite[\S\,10.4 Lem.~C]{humphreys_intoduction_to_lie_algebras_and_representation_theory}. Further, we can write the exceptional divisor as $E_1=(K_X+E_1)-K_X$ and compute $(K_X+E_1)^2=-2$. Hence, there is an orthogonal transformation $T$ fixing $K_X$ and sending the root $K_X+E_1$ to the root $T(K_X+E_1)=H-E_1-E_2-E_3$. Thus, $E_1$ is mapped to $D=H-E_1-E_2-E_3-K_X$ under $T$. Therefore, the image of the toric system associated to
	$$\sfD^b(X)=\langle \mcO_X, \mcO_X(E_1), \dots, \mcO_X(E_8), \mcO_X(H), \mcO_X(2H) \rangle$$
	under $T$ is a toric system which corresponds to a maximal numerically exceptional collection consisting of line bundles, which is not exceptional since $H^1(X, \mcO_X(D))\neq 0$.
\end{proof}

\subsection{Towards \texorpdfstring{\cref{thm:main_result_2}}{Theorem \ref{thm:main_result_2}}}
The proof of \cref{thm:main_result_2} is separated in two steps. Recall that \cite[Thm.~3.1]{elagin_lunts_on_full_exceptional_collections_of_line_bundles_on_del_pezzo_surfaces} states that, on a del Pezzo surface, any toric system is obtained from a sequence of augmentations from an exceptional toric system on $\bbP^2$ or a Hirzebruch surface.
In the first step, we generalize this result to the blow-up $X$ of $9$ points in very general position.
In the second step, we prove the transitivity of the braid group action, as stated in \cref{thm:main_result_2}, by realizing each orthogonal transformation of $\Pic(X)$ fixing $K_X$ as a sequence of mutations.

The following \cref{lem:blow_down_del_pezzo} ensures that we can reduce $X$ to a del Pezzo surface by contracting any $(-1)$-curve. As we were unable to find a suitable statement in the literature, we include a proof.
\begin{lemma}\label{lem:blow_down_del_pezzo}
	Let $X$ be the blow-up of $\bbP^2$ in $9$ points in very general position and let $E\subseteq X$ be a $(-1)$-curve. Then the surface $Y$ obtained from blowing down $E$ is a del Pezzo surface.
\end{lemma}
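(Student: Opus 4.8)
The plan is to verify directly, via the Nakai--Moishezon criterion, that $-K_Y$ is ample. Write $\pi\colon X \to Y$ for the contraction of $E$, so that $K_X = \pi^*K_Y + E$ and hence $\pi^*(-K_Y) = -K_X + E$. First I would record the degree of $Y$: since $E$ is a $(-1)$-curve with $-K_X\cdot E = 1$ and $(-K_X)^2 = 0$, one computes
\[
	(-K_Y)^2 = (\pi^*(-K_Y))^2 = (-K_X + E)^2 = (-K_X)^2 + 2(-K_X\cdot E) + E^2 = 0 + 2 - 1 = 1.
\]
In particular $(-K_Y)^2 > 0$, so by Nakai--Moishezon it remains to show that $-K_Y \cdot C > 0$ for every irreducible curve $C \subseteq Y$.

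Given such a $C$, I would pass to its strict transform $\tilde C \subseteq X$, an irreducible curve distinct from $E$. By the projection formula together with $\pi_*\tilde C = C$,
\[
	-K_Y \cdot C = \pi^*(-K_Y)\cdot \tilde C = (-K_X)\cdot \tilde C + E \cdot \tilde C.
\]
Both summands are non-negative: the first because $-K_X$ is nef, the second because $\tilde C \neq E$ are distinct integral curves. Hence $-K_Y \cdot C \ge 0$, and the whole point of the argument is to exclude equality.

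The hard part will be ruling out the case $-K_Y\cdot C = 0$, which forces simultaneously $(-K_X)\cdot \tilde C = 0$ and $E \cdot \tilde C = 0$. The first condition places $\tilde C$ in $K_X^\perp$. Since $K_X^2 = 0$ and $\Pic(X)$ has signature $(1,9)$, the intersection form is negative semidefinite on $K_X^\perp$ with radical $\bbQ K_X$, so $\tilde C^2 \le 0$. On the other hand, arguing exactly as in the proof of \cref{lem:left_orth_divisor_-1} --- using that for $9$ very general points every integral curve of negative self-intersection is a $(-1)$-curve, and that $(-1)$-curves meet $K_X$ nontrivially --- no curve lying in $K_X^\perp$ can have negative self-intersection; thus $\tilde C^2 = 0$. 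As any isotropic vector of $K_X^\perp$ is a multiple of $K_X$, and $\tilde C$ is an integral effective class while $-K_X$ is the class of the irreducible anticanonical curve, this forces $\tilde C = -K_X$. But then $E \cdot \tilde C = -K_X \cdot E = 1 \neq 0$, contradicting $E\cdot\tilde C = 0$. Therefore $-K_Y \cdot C > 0$ for all irreducible $C$, so $-K_Y$ is ample and $Y$ is a del Pezzo surface. I expect the delicate point to be precisely this exclusion of the anticanonical curve, which is where the very general position of the points (through the input of \cref{lem:left_orth_divisor_-1} and the irreducibility of $-K_X$) is genuinely used.
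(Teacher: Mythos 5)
Your argument is correct, but it takes a genuinely different route from the paper's. The paper never touches ampleness directly: it invokes the classical characterization of del Pezzo surfaces among blow-ups of at most $8$ (possibly infinitely near) points --- del Pezzo if and only if no $(-2)$-curves, via Manin's criterion --- and then observes that a $(-2)$-curve on $Y$ would have strict transform on $X$ of self-intersection at most $-2$, which is impossible since by de Fernex every integral curve of negative self-intersection on $X$ is a $(-1)$-curve. Your proof instead verifies the Nakai--Moishezon criterion for $-K_Y$ head-on, re-using the nefness of $-K_X$ and the lattice mechanism already present in the proof of \cref{lem:left_orth_divisor_-1} (negative semidefiniteness of $K_X^\perp$ with radical $\bbQ K_X$, plus the exclusion of negative curves orthogonal to $K_X$). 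What your approach buys is self-containedness: you avoid Manin's criterion entirely and, in particular, the bookkeeping of infinitely near points, which the paper must address because the $8$ points underlying $Y$ need not be distinct points of $\bbP^2$. What the paper's approach buys is brevity and the slightly stronger structural fact that $Y$ carries no $(-2)$-curve at all, deduced in one line from the corresponding property of $X$.

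One step of yours deserves a caveat, though it is harmless. Having reduced to $\tilde C = m(-K_X)$ with $m \ge 1$, you assert $m=1$ ``because $-K_X$ is the class of the irreducible anticanonical curve.'' That inference is not immediate: a priori an integral curve could represent $-mK_X$ for some $m \ge 2$ (two distinct integral curves can lie in proportional classes; excluding this here would require an extra input such as $h^0(-mK_X)=1$ for very general points). Fortunately you never need $m=1$: for any $m \ge 1$ one has $E \cdot \tilde C = m(-K_X \cdot E) = m \neq 0$, contradicting $E \cdot \tilde C = 0$ just as in your $m=1$ computation. With that one sentence adjusted, the proof is complete.
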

\begin{proof}
	Recall that the blow-up of less than $8$ points in $\bbP^2$ is a del Pezzo surface if and only if not $3$ of the points lie on a line and not $6$ lie on a conic; see, e.g., \cite[Thm.~24.4]{manin_cubic_forms}. Therefore the points are in special position if and only if the surface admits a $(-2)$-curve, namely the strict transform of a conic through $6$~blown up points or the line through $3$~blown up points.
	We further observe that the equivalence also holds true if the points are chosen infinitely near: If a point $p$ is blown up on an exceptional divisor $E$, then the class of the strict transform of $E$ is $E-E_p$, where $E_p$ is the exceptional divisor corresponding to the blow-up of $p$. We compute $(E-E_p)^2=-2$ in that case.
	
	Let $Y$ be the blow-down of the $(-1)$-curve and $\pi \colon X \to Y$ the blow-up map with center $p \in Y$ and exceptional divisor $E$. Then for any curve $C$ in $Y$, the strict transform in $X$ has divisor class $p^*C - mE$, where $m$ is the multiplicity of $C$ at $p$. Thus the self-intersection of the strict transform of $C$ is $C^2-m$. Hence, if $X$ has no $(-2)$-curves, then $Y$ has no $(-2)$-curves. Now $Y$ is obtained from $\bbP^2$ by a sequence of blow-ups of (possibly infinitely near) $8$ points. As $Y$ has no $(-2)$-curves, $Y$ must be a del Pezzo surface.
\end{proof}
\begin{theorem}\label{thm:generalization_elagin_lunts}
	Let $X$ be the blow-up of $\bbP^2$ in $9$ in very general position.
	Any toric system on $X$ is a standard augmentation, i.e.~it is obtained by a sequence of augmentations from a full exceptional toric system on $\bbP^2$ or from a full exceptional toric system on a (non necessarily minimal) Hirzebruch surface.
\end{theorem}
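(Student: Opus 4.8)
The plan is to reduce to the del Pezzo case by blowing down a single $(-1)$-curve and then to quote the theorem of Elagin--Lunts. Let $(A_1, \dots, A_n)$ be a toric system on $X$; since such systems correspond to numerically exceptional collections of line bundles of maximal length, we have $n = \rk \sfK_0^{\mathrm{num}}(X) = 12$. Suppose for the moment that one of the entries, say $A_m$, is linearly equivalent to a $(-1)$-curve. Then \cref{lem:blow_down_del_pezzo} guarantees that the blow-down $Y$ of $A_m$ is a del Pezzo surface, while \cite[Prop.~3.3]{elagin_lunts_on_full_exceptional_collections_of_line_bundles_on_del_pezzo_surfaces} exhibits $(A_1, \dots, A_n)$ as an augmentation of a toric system on $Y$. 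Since $Y$ is del Pezzo, \cite[Thm.~3.1]{elagin_lunts_on_full_exceptional_collections_of_line_bundles_on_del_pezzo_surfaces} expresses that toric system as a standard augmentation, and one further augmentation keeps it standard. Thus everything reduces to the claim that \emph{every toric system on $X$ has an entry that is linearly equivalent to a $(-1)$-curve}.

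To establish the claim I would first produce a \emph{numerical} $(-1)$-class among the $A_i$ and then upgrade it geometrically. For the first step, I pass to the smooth complete toric surface associated to $(A_1, \dots, A_n)$ by the Hille--Perling correspondence \cite[\S\S\,2--3]{hille_perling_exceptional_sequences_of_invertible_sheaves_on_rational_surfaces}, whose torus-invariant boundary divisors reproduce the cyclic intersection data and hence the sequence of self-intersections $A_i^2$. As $n = 12 > 4$, this toric surface is not minimal---the minimal smooth complete toric surfaces being $\bbP^2$ and the Hirzebruch surfaces---so it carries a torus-invariant $(-1)$-curve. Consequently $A_m^2 = -1$ for some index $m$.

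For the second step I check the hypotheses of \cref{lem:left_orth_divisor_-1}. From $-K_X = \sum_i A_i$ and the defining relations of a toric system (indices read cyclically) one computes
\begin{equation*}
	A_m \cdot K_X = -\sum_{j} A_m \cdot A_j = -\bigl(A_{m-1}\cdot A_m + A_m^2 + A_m \cdot A_{m+1}\bigr) = -(1 - 1 + 1) = -1,
\end{equation*}
so Riemann--Roch gives $\chi(A_m) = 1 + \tfrac12\bigl(A_m^2 - A_m\cdot K_X\bigr) = 1$. Now \cref{lem:left_orth_divisor_-1} applies and shows that $A_m$ is linearly equivalent to a $(-1)$-curve, which proves the claim and with it the theorem.

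I expect the main obstacle not to lie in the combinatorial production of a numerical $(-1)$-class---this is insensitive to the position of the points and works whenever $n > 4$---but in the geometric upgrade. It is precisely \cref{lem:left_orth_divisor_-1}, and thereby the very general position of the $9$ points through the bound on negative curves of \cite{de_fernex_negative_curves_on_very_general_blow_ups_of_p2} (or \cite{nagata_on_rationl_surface_ii}), that turns the numerical class into an honest $(-1)$-curve; on surfaces where that fails the argument breaks down, in line with the non-full examples of \cref{sec:blow_up_10_points}. The second load-bearing input is \cref{lem:blow_down_del_pezzo}, which ensures that a single contraction already lands in the del Pezzo range, so that the theorem of Elagin--Lunts can be invoked without having to re-prove it for $X$ itself.
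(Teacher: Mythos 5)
Your proposal is correct and takes essentially the same route as the paper's proof: both reduce, via \cref{lem:left_orth_divisor_-1}, \cref{lem:blow_down_del_pezzo}, and Elagin--Lunts, to finding an entry with $A_m^2=-1$, and both produce that entry by passing to the Hille--Perling toric surface, whose non-minimality forces a torus-invariant $(-1)$-curve among the boundary divisors. Your explicit check that $A_m\cdot K_X=-1$ and hence $\chi(A_m)=1$ is a detail the paper leaves implicit, but the argument is identical in substance.
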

\begin{proof}
	Let $A_1, \dots, A_{12}$ be a toric system on $X$. By \cref{lem:left_orth_divisor_-1}, \cref{lem:blow_down_del_pezzo}, and \cite[Thm.~3.1]{elagin_lunts_on_full_exceptional_collections_of_line_bundles_on_del_pezzo_surfaces} we only need to show that there is a divisor $A_i$ with $A_i^2=-1$. In this situation the argument of Elagin--Lunts still applies: By \cite[Prop.~2.7]{hille_perling_exceptional_sequences_of_invertible_sheaves_on_rational_surfaces} there exists a smooth toric surface $Y$ with torus invariant irreducible divisors $D_1, \dots, D_{12}$ such that $D_i^2 = A_i^2$ for any $i$. Since $Y$ is not minimal, $Y$ contains $(-1)$-curve which must be torus invariant as otherwise the self-intersection would be non-negative. We conclude that one of the $D_i$ squares to $-1$, hence there exists $A_i$ with $A_i^2=-1$.
\end{proof}
\begin{corollary}\label{cor:generalization_elagin_lunts}
	On the blow-up of $\bbP^2$ in $9$ very general points any numerically exceptional collection of maximal length consisting of line bundles is a full exceptional collection.
\end{corollary}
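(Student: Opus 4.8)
The plan is to deduce the corollary directly from \cref{thm:generalization_elagin_lunts} by passing through the dictionary between toric systems and numerically exceptional collections of line bundles recalled at the beginning of \cref{se:9_points}. First I would take a numerically exceptional collection $(\mcO_X(D_1), \dots, \mcO_X(D_{12}))$ of maximal length (note $\rk \sfK_0^{\mathrm{num}}(X)=12$ for the blow-up of $9$ points) and form its associated toric system $(A_1, \dots, A_{12})$, where $A_i\coloneqq D_{i+1}-D_i$ for $1\le i\le 11$ and $A_{12}\coloneqq D_1-K_X-D_{12}$. Recovering the collection from the toric system and a choice of $D_1$ is the inverse of this assignment, so it suffices to show that the toric system underlies a \emph{full exceptional} collection of line bundles.

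Next I would invoke \cref{thm:generalization_elagin_lunts} to conclude that $(A_1, \dots, A_{12})$ is a standard augmentation, i.e.~it is obtained by a sequence of augmentations from a full exceptional toric system on $\bbP^2$ or on a (not necessarily minimal) Hirzebruch surface. The base of this tower is, by construction, an honest full exceptional collection of line bundles---for instance the Beilinson collection $\langle \mcO_{\bbP^2}, \mcO_{\bbP^2}(1), \mcO_{\bbP^2}(2)\rangle$ or its Hirzebruch analogue. Each augmentation step is modelled precisely on Orlov's blow-up formula \cite{orlov_projective_bundles_monoidal_transformations_and_derived_categories_of_coherent_sheaves}: if $p\colon X\to X'$ is the blow-up at a point with exceptional divisor $E$, then augmenting the toric system of a full exceptional collection $\langle \mcE_1,\dots,\mcE_n\rangle$ on $X'$ corresponds to forming $\langle j_*\mcO_E(-1), \mathrm{L}p^*\mcE_1,\dots,\mathrm{L}p^*\mcE_n\rangle$, which is again a full exceptional collection; see \cite[\S\,5]{hille_perling_exceptional_sequences_of_invertible_sheaves_on_rational_surfaces} and \cite[\S\,2.6]{elagin_lunts_on_full_exceptional_collections_of_line_bundles_on_del_pezzo_surfaces}.

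The one point to spell out is therefore that augmentation preserves not merely numerical exceptionality but genuine exceptionality together with fullness. This is exactly the content of Orlov's formula applied inductively along the tower: since the base is full and exceptional and each augmentation step reproduces Orlov's semiorthogonal decomposition, the toric system $(A_1,\dots,A_{12})$ underlies a full exceptional collection of line bundles. Our original collection differs from this one only by a global twist by a line bundle, which affects neither fullness nor exceptionality, so it is itself a full exceptional collection. I expect no real obstacle at this stage: all the difficulty has been absorbed into \cref{thm:generalization_elagin_lunts}, whose proof relied on \cref{lem:left_orth_divisor_-1} to locate a $(-1)$-class among the $A_i$. The only care required here is the bookkeeping of the correspondence and the observation that the augmentation construction is Orlov's blow-up formula on the nose, hence yields full exceptional collections rather than merely numerically exceptional ones.
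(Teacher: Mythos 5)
Your proposal is correct and follows essentially the same route as the paper: the paper's proof is a one-liner deducing the statement from \cref{thm:generalization_elagin_lunts} together with \cite[Prop.~2.21]{elagin_lunts_on_full_exceptional_collections_of_line_bundles_on_del_pezzo_surfaces}, which is precisely the fact you spell out by hand, namely that a standard augmentation corresponds to a full exceptional collection via Orlov's blow-up formula. The only cosmetic caveat is that the augmented line-bundle collection $(\mcO_X, \mcO_X(E), p^*\mcO_{X'}(D_2'), \dots)$ agrees with Orlov's decomposition $\langle j_*\mcO_E(-1), \mathrm{L}p^*\sfD^b(X')\rangle$ only after one mutation of the first pair rather than ``on the nose,'' but this is exactly what the cited references carry out, so there is no gap.
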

\begin{proof}
	By \cite[Prop.~2.21]{elagin_lunts_on_full_exceptional_collections_of_line_bundles_on_del_pezzo_surfaces} a standard augmentation corresponds to a full exceptional collection.
\end{proof}
In order to conclude the proof of \cref{thm:main_result_2} we are left to show that any two full exceptional collections resulting from two different sequences of augmentations are related by mutations and shifts. On a del Pezzo surface, an exceptional object is completely determined by its class in the Grothendieck group:
\begin{lemma}[Exceptional objects on del Pezzo surfaces, \cite{gorodentsev_exceptional_bundles_on_surfaces_with_a_moving_anticanonical_class, kuleshov_orlov_exceptional_sheaves_on_del_pezzo_surfaces}]\label{lem:exc_obj_del_pezzo}
	Let $X$ be a del Pezzo surface and let $E \in \sfD^b(X)$ be an exceptional object. Then $E$ is isomorphic to some $F[k]$, where $F$ is an exceptional sheaf on $X$ and $k \in \bbZ$. Moreover $F$ is either locally free or a torsion sheaf of the form $\mcO_C(d)$, where $C$ is a $(-1)$-curve. In particular, two exceptional objects with the same image in $\sfK_0(X)$ only differ by an even number of shifts.
\end{lemma}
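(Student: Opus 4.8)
The first two claims are exactly the structural results of the cited papers, so the plan is to recall their strategy and then deduce the final sentence by an elementary positivity argument.

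First I would invoke the purity theorem of Kuleshov--Orlov: on a del Pezzo surface the anticanonical class $-K_X$ is ample, and under this hypothesis every exceptional object $E \in \sfD^b(X)$ is isomorphic to a shift $F[k]$ of a single coherent sheaf $F$, which is then automatically exceptional. The mechanism is that the cohomology sheaves $\mcH^i(E)$ cannot be spread over several degrees: comparing the extreme nonvanishing cohomology sheaves through the spectral sequence converging to $\Hom^\bullet(E,E)$, and using Serre duality $\Ext^2(A,B)\cong\Hom(B,A\otimes K_X)^\vee$ together with the ampleness of $-K_X$, one produces a nonzero class in $\Ext^1(E,E)$ unless all but one $\mcH^i(E)$ vanish. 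This is the only genuinely deep input and the step where the del Pezzo hypothesis is essential, so I would import it wholesale rather than reprove it.

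Next I would recall the classification of the exceptional sheaf $F$. If $\rk F > 0$, exceptionality rules out a torsion subsheaf and any failure of local freeness, since either would contribute to $\Ext^1(F,F)$; thus $F$ is locally free. If instead $F$ is torsion, its scheme-theoretic support is a curve; that $\Hom(F,F)=k$ forces the support to be an integral curve $C$ and $F$ to be a line bundle $\mcO_C(d)$ on it. A Riemann--Roch computation then gives $\chi(\mcO_C(d),\mcO_C(d))=-C^2$, so exceptionality forces $C^2=-1$, and an integral curve of self-intersection $-1$ is a $(-1)$-curve. (Ampleness of $-K_X$ also guarantees $\Ext^2(F,F)=0$ here, so that $\chi(F,F)=1$ and $\Hom(F,F)=k$ already yield $\Ext^1(F,F)=0$.)

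Finally, for the last sentence, suppose $E_1=F_1[k_1]$ and $E_2=F_2[k_2]$ satisfy $[E_1]=[E_2]$ in $\sfK_0(X)$, so that $[F_1]=(-1)^{k_2-k_1}[F_2]$. I would exhibit a numerical invariant of exceptional sheaves that is negated by a single shift. When $F_1$ (hence $F_2$) is locally free its rank is strictly positive, whereas $-[F_2]$ has negative rank; so the sign must be $+1$. When $F_1=\mcO_C(d)$ is torsion, $\rk F_1 = 0$ but $\cc_1(F_1)=[C]$ is the nonzero effective class of a $(-1)$-curve, while $\cc_1(-[F_2])=-[C]$ is not effective; again the sign is $+1$. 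In both cases $k_2-k_1$ is even and $[F_1]=[F_2]$, and since an exceptional sheaf on a del Pezzo surface is determined up to isomorphism by its class in $\sfK_0(X)$ we get $F_1\cong F_2$ and hence $E_2\cong E_1[k_2-k_1]$ with $k_2-k_1$ even, as claimed.
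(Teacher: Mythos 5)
Your proposal is correct and takes essentially the same route as the paper, whose proof is itself only a pointer to the literature: Kuleshov--Orlov for purity (every exceptional object is a shifted sheaf) and for the dichotomy locally free versus $\mcO_C(d)$ on a $(-1)$-curve, Gorodentsev for the fact that an exceptional bundle is determined by its class in $\sfK_0(X)$, and the observation that the torsion sheaves are determined by their Chern character. Your explicit positivity argument ruling out the sign $(-1)^{k_2-k_1}=-1$ (positive rank in the locally free case, effectivity of $\cc_1$ in the torsion case) is a useful filling-in of the parity claim the paper leaves implicit; two small points to keep in mind are that ``an integral curve with $C^2=-1$ is a $(-1)$-curve'' requires adjunction together with ampleness of $-K_X$ (it is false on general surfaces), and that the obstruction class in the purity argument need not land precisely in $\Ext^1(E,E)$ --- but since you import that step wholesale from the cited reference, exactly as the paper does, nothing is at stake.
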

\begin{proof}[Pointer to references]
	That every exceptional object is a sheaf up to shift can be found in \cite[Prop.~2.10]{kuleshov_orlov_exceptional_sheaves_on_del_pezzo_surfaces} and \cite[Prop.~2.9]{kuleshov_orlov_exceptional_sheaves_on_del_pezzo_surfaces} states that an exceptional sheaf is locally free or a torsion sheaf of the form $\mcO_C(d)$ where $C$ is a $(-1)$-curve. In the latter case, such torsion sheaf is clearly uniquely determined by their Chern character and hence by their class in $\sfK_0(X)$. The case of locally free sheaves is treated in \cite[Cor.~2.5]{gorodentsev_exceptional_bundles_on_surfaces_with_a_moving_anticanonical_class}.
\end{proof}

For later use in the proof of \cref{thm:transitivity_on_9_points} we compute in the following \cref{lem:two_blow_up_realizations_are_related_by_mutations} a relation by mutations and shifts of two concrete exceptional collections on the blow-up of $\bbP^2$ in $3$ points. The statement of \cref{lem:two_blow_up_realizations_are_related_by_mutations} can also be deduced from \cite[Thm.~7.7]{kuleshov_orlov_exceptional_sheaves_on_del_pezzo_surfaces}. We give an independent proof by computing an explicit sequence of mutations relating both collections.
\begin{lemma}\label{lem:two_blow_up_realizations_are_related_by_mutations}
	Let $X$ be the blow-up of $3$ points in $\bbP^2$ which do not lie on a line. Then the full exceptional collections $$\sfD^b(X)=\langle \mcO_{E_1}(-1),\mcO_{E_2}(-1),\mcO_{E_3}(-1), \mcO_X, \mcO_X(H), \mcO_X(2H) \rangle$$ and
	\begin{align*}
		\sfD^b(X) =\langle & \mcO_{H-E_2-E_3}(-1), \mcO_{H-E_1-E_3}(-1), \mcO_{H-E_1-E_2}(-1),\\
		& \mcO_X, \mcO_X(2H-E_1-E_2-E_3), \mcO_X(4H-2E_1-2E_2-2E_3) \rangle
	\end{align*}
	are related by mutations and shifts.
\end{lemma}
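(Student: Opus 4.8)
\emph{Recognizing the Cremona transformation.} My starting point is that the second collection is the image of the first under the standard quadratic Cremona transformation based at the three points. Set $\alpha_0 \coloneqq H - E_1 - E_2 - E_3$, a root of $\Pic(X)$ orthogonal to $K_X$, so that $s_{\alpha_0}$ lies in $\Orth(\Pic(X))_{K_X} = W_X$ by \cref{lem:weyl_group_is_stabilizer}. Since $\alpha_0^2 = -2$ one has $s_{\alpha_0}(x) = x + (x\cdot\alpha_0)\alpha_0$, whence $s_{\alpha_0}(H) = 2H - E_1 - E_2 - E_3$ and $s_{\alpha_0}(E_i) = H - E_j - E_k$ for $\{i,j,k\} = \{1,2,3\}$. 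By \cref{lem:orth_trans_lift_to_isometries} the transformation $s_{\alpha_0}$ lifts to the unique isometry $\Phi$ of $\sfK_0^{\mathrm{num}}(X)$ with $\Phi(\sfp) = \sfp$ and $\Phi([\mcO_X]) = [\mcO_X]$. A short computation with \cref{eq:Riemann_Roch} shows that $\Phi$ sends the class $(0, E_i, -\tfrac12)$ of $\mcO_{E_i}(-1)$ to $(0, H - E_j - E_k, -\tfrac12) = [\mcO_{H - E_j - E_k}(-1)]$, and sends $[\mcO_X(H)]$ and $[\mcO_X(2H)]$ to $[\mcO_X(2H - E_1 - E_2 - E_3)]$ and $[\mcO_X(4H - 2E_1 - 2E_2 - 2E_3)]$ respectively. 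Hence $\Phi$ carries the first collection onto the second term by term.

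\emph{Reduction to the pseudolattice.} This reduces the claim to finding, on the level of $\sfK_0^{\mathrm{num}}(X)$, a composite $\beta$ of elementary mutations $\sfL_{i,i+1}, \sfR_{i,i+1}$ and sign changes taking the exceptional basis of classes of the first collection to that of the second. Indeed, because mutations in $\sfD^b(X)$ are compatible with the projection to $\sfK_0^{\mathrm{num}}(X)$, applying the very same $\beta$ to the first collection in $\sfD^b(X)$ produces a full exceptional collection whose $i$-th term agrees, up to the sign of its class, with the $i$-th term of the second collection. As $X$ is a del Pezzo surface, \cref{lem:exc_obj_del_pezzo} then forces these terms to coincide up to shift, which is exactly the assertion of the lemma. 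Thus it suffices to realize $\Phi$ by mutations and sign changes on $\sfK_0^{\mathrm{num}}(X)$; note as a consistency check that, $\Phi$ being an isometry, source and target bases share the Gram matrix $M_3$.

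\emph{The explicit braid.} For the final step I would compute $\beta$ directly, starting from $M_3$. The governing mechanism is that mutating a line bundle past the torsion class of a $(-1)$-curve effects a twist: for instance $\sfR_{3,4}$ replaces $(\mcO_{E_3}(-1), \mcO_X)$ by $(\mcO_X, \mcO_X(E_3))$, since $\sfR_{\mcO_X}(\mcO_{E_3}(-1)) = [\mcO_{E_3}(-1)] + [\mcO_X] = [\mcO_X(E_3)]$; iterating such moves converts the exceptional divisors $E_i$ into the $(-1)$-curves $H - E_j - E_k$ while turning $\mcO_X(H)$ into $\mcO_X(2H - E_1 - E_2 - E_3)$. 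I would exploit the $\mathfrak{S}_3$-symmetry permuting the three points to handle a single strand and then symmetrize. The only real obstacle is the bookkeeping: assembling a sequence of adjacent mutations of manageable length and checking at each stage that one still has an exceptional basis with the expected intermediate Gram matrix. As this is a finite computation in a rank-$6$ unimodular pseudolattice it can be made completely explicit; the same relation could alternatively be extracted from \cite[Thm.~7.7]{kuleshov_orlov_exceptional_sheaves_on_del_pezzo_surfaces}, but the point here is to obtain it independently, since the general realization of isometries by mutations is only established afterwards.
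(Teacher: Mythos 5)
Your set-up is sound and in fact matches the paper's: the reduction to $\sfK_0(X)$ via \cref{lem:exc_obj_del_pezzo} (so that a chain of mutations and sign changes between the exceptional bases lifts to a chain of mutations and shifts between the collections) is exactly how the paper's proof begins, and your identification of the second collection as the term-by-term image of the first under the lift $\Phi$ of $s_{\alpha_0}$ is correct and a nice conceptual framing. But there is a genuine gap precisely where the lemma's nontrivial content lies: you never exhibit the braid $\beta$. Writing ``I would compute $\beta$ directly \dots\ it can be made completely explicit'' is an assertion, not an argument. The existence of such a $\beta$ cannot be granted on general grounds: the braid group is infinite, so ``a finite computation in a rank-$6$ pseudolattice'' does not by itself terminate a search; \cref{thm:main_result} gives transitivity only \emph{up to isometry}, and whether this specific isometry $\Phi$ is realizable by mutations is exactly the question at stake --- indeed \cref{sec:blow_up_10_points} shows that for ten points the analogous isometry $\iota$ is \emph{not} realizable by mutations, so no soft principle can close this step. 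You correctly observe that you may not invoke the later realization results, since \cref{thm:transitivity_on_9_points} and \cref{prop:max_2_numerical_oribts_10_points} use this very lemma to realize the simple reflection $s_{\alpha_0}$; that makes producing the explicit sequence unavoidable, and it is what the paper supplies: a concrete chain of thirteen adjacent mutations followed by two sign changes returning to the Gram matrix of \cref{eq:gram_matrix_blow_up_3_points}, a Chern-character identification of the resulting classes, a twist by $K_X$ (realizable by mutations via the helix property), and a final $\sfR_{5,6} \circ \sfR_{4,5} \circ \sfR_{5,6} \circ \sfR_{4,5}$.

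Two smaller points. First, the common Gram matrix of the two collections is the one displayed in \cref{eq:gram_matrix_blow_up_3_points}, whose upper-right block consists of $-1$'s; it is not $M_3$, which arises only after flipping the signs of the three torsion classes. Second, your single-step check $\sfR_{[\mcO_X]}([\mcO_{E_3}(-1)]) = [\mcO_{E_3}(-1)] + [\mcO_X] = [\mcO_X(E_3)]$ is correct, and this twisting mechanism is indeed what drives the paper's explicit sequence --- but sketching the mechanism and appealing to the $\mathfrak{S}_3$-symmetry is not the same as verifying that some composite of such moves lands exactly on the target basis, which is the computation the proof must contain.
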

\begin{proof}
	Since $X$ is a del Pezzo surface it is enough to verify the claim in $\sfK_0(X)$ by using \cref{lem:exc_obj_del_pezzo}. In $\sfK_0(X)$ this becomes a lattice-theoretic computation:
	Let $$a_i\coloneqq [\mcO_{E_i}(-1)] \; \text{and} \; b_1\coloneqq [\mcO_X], b_2\coloneqq [\mcO_X(H)], b_3\coloneqq [\mcO_X(2H)].$$
	Then the Gram matrix corresponding to the basis $(a_1, a_2, a_3, b_1, b_2, b_3)$ is
	\begin{equation}\label{eq:gram_matrix_blow_up_3_points}
		\begin{pmatrix}
			1 & 0 & 0 & -1 & -1 & -1 \\
			0 & 1 & 0 & -1 & -1 & -1 \\
			0 & 0 & 1 & -1 & -1 & -1 \\
			0 & 0 & 0 & 1 & 3 & 6   \\
			0 & 0 & 0 & 0 & 1 & 3  \\
			0 & 0 & 0 & 0 & 0 & 1
		\end{pmatrix}.
	\end{equation}
	Similarly to \cref{eq:mutations_hirzebruch_iso} we have the following sequence of mutations
	\begin{align*}
		(a_1, a_2, a_3, b_1, b_2, b_3) \xrightarrow{\sfL_{5,6}}
		\xrightarrow{\sfR_{3,4}} 
				\xrightarrow{\sfR_{2,3}}
				\xrightarrow{\sfL_{4,5}} 
				\xrightarrow{\sfL_{3,4}} 
				\xrightarrow{\sfR_{2,3}} 
				\xrightarrow{\sfR_{1,2}} 
				\xrightarrow{\sfL_{2,3}}
				\xrightarrow{\sfR_{3,4}} 
				\xrightarrow{\sfR_{4,5}}
				\xrightarrow{\sfL_{2,3}} 
				\xrightarrow{\sfL_{3,4}}
				\xrightarrow{\sfR_{5,6}} \\
				 (a_2+a_3+2b_1-3b_2+b_3, -a_1-a_3-2b_1+3b_2-b_3, -a_1-a_2-2b_1+3b_2-b_3,\\
				  a_1+a_2+a_3+3b_1-3b_2+b_3, b_2, a_1+a_2+a_3+2b_1-3b_2 ).
	\end{align*}
	After changing the sign of the first and last basis elements we obtain the exceptional basis
	\begin{align}\label{eq:intermediate_computation_f_e_c_2}
		(-a_2-a_3-2b_1+3b_2-b_3, -a_1-a_3-2b_1+3b_2-b_3, -a_1-a_2-2b_1+3b_2-b_3,\\
		 a_1+a_2+a_3+3b_1-3b_2+b_3, b_2, -a_1-a_2-a_3-2b_1+3b_2 ),\nonumber
	\end{align}
	which has Gram matrix \Cref{eq:gram_matrix_blow_up_3_points}.
	Recall that the Chern character on a surface is given by $$\ch=\left(\rk,\cc_1,\frac{1}{2}(\cc_1^2-2\cc_2)\right).$$
	We compute
	\begin{align*}
		\ch(a_i) &{}=\ch(\mcO_{E_i} (E_i))=\left(0,E_i, -\frac{1}{2}\right),\\
		\ch(b_1) &{}=\ch(\mcO_X)=(1,0,0),\\
		\ch(b_2) &{}=\ch(\mcO_X(H))=\left(1,H,\frac{1}{2}\right),\\
		\ch(b_3) &{}=\ch(\mcO_X(2H))=(1,2H,2).
	\end{align*}
	Thus \cref{eq:intermediate_computation_f_e_c_2} corresponds to the full exceptional collection
	\begin{equation}\label{eq:intermediate_computation_f_e_c}
		\langle  \mcO_{H-E_2-E_3}, \mcO_{H-E_1-E_3}, \mcO_{H-E_1-E_2},
		 \mcO_X(-H+E_1+E_2+E_3), \mcO_X(H), \mcO_X(3H-E_1-E_2-E_3) \rangle.
	\end{equation}
	We observe that $H-E_i-E_j$ is the class of the strict transform of the line through the points $p_i$ and $p_j$ and $K_X$ can be rewritten as
	\begin{align*}
		K_X & { }=-3H+E_1+E_2+E_3\\
		 & { }=-3(2H-E_1-E_2-E_3)+(H-E_2-E_3)+(H-E_1-E_3)+(H-E_1-E_2),
	\end{align*}
	where $2H-E_1-E_2-E_3$ can be identified with the pullback of a hyperplane class on $\bbP^2$ considered as the blow-down of $(H-E_2-E_3)$, $(H-E_1-E_3)$ and $(H-E_1-E_2)$. Hence $$ \mcO_{H-E_i-E_j}(K_X)=\mcO_{H-E_i-E_j}(H-E_i-E_j)=\mcO_{H-E_i-E_j}(-1),$$
	where we have used the projection formula in the first equality.
	Recall that any twist with an integer multiple of the canonical line bundle can be realized as a sequence of mutations. Twisting \Cref{eq:intermediate_computation_f_e_c} by $K_X$ yields
	\begin{align*}
		\langle & \mcO_{H-E_2-E_3}(-1), \mcO_{H-E_1-E_3}(-1), \mcO_{H-E_1-E_2}(-1), \\
		& \mcO_X(-4H+2E_1+2E_2+2E_3), \mcO_X(-2H+E_1+E_2+E_3), \mcO_X \rangle.
	\end{align*}
	Finally, by applying the sequence $\sfR_{5,6} \circ \sfR_{4,5} \circ \sfR_{5,6} \circ \sfR_{4,5}$ of mutations, we obtain the desired full exceptional collection
	\begin{align*}
		\sfD^b(X) =\langle & \mcO_{H-E_2-E_3}(-1), \mcO_{H-E_1-E_3}(-1), \mcO_{H-E_1-E_2}(-1),\\
		& \mcO_X, \mcO_X(2H-E_1-E_2-E_3), \mcO_X(4H-2E_1-2E_2-2E_3) \rangle. \qedhere
	\end{align*}
\end{proof}
\begin{remark}[Geometric interpretation of \cref{lem:two_blow_up_realizations_are_related_by_mutations}]
	The surface $X$ in \cref{lem:two_blow_up_realizations_are_related_by_mutations} admits two different blow-up realizations. First one blows up $3$ points $p_1, p_2, p_3$ in $\bbP^2$ and then one contract the $(-1)$-curves $H-E_i-E_j$ which are the strict transforms of the lines through the points $p_i, p_j$. The full exceptional collections compared in \Cref{lem:two_blow_up_realizations_are_related_by_mutations} are the collections resulting from Orlov's blow-up formula applied to these different realizations of $X$.
	Moreover, this construction defines a birational map $\bbP^2 \dashrightarrow \bbP^2$, which is known as \emph{standard quadratic Cremona transformation}.
\end{remark}

\begin{theorem}\label{thm:transitivity_on_9_points}
	On the blow-up $X$ of $\bbP^2$ in $9$ points in very general position, any two full exceptional collections consisting of line bundles are related by mutations and shifts.
\end{theorem}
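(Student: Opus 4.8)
The plan is to deduce the statement from \cref{thm:main_result_introduction}~\labelcref{item:main_result_introduction_i} by verifying condition \labelcref{item:condition_i}, namely that the self-isometries occurring there can be realized by mutations and shifts. Let $E_\bullet$ and $F_\bullet$ be two full exceptional collections of line bundles. By \cref{cor:generalization_elagin_lunts} these are exactly the maximal numerically exceptional collections of line bundles, so their classes $e_\bullet$ and $f_\bullet$ are exceptional bases of $G\coloneqq\sfK_0^{\mathrm{num}}(X)$. By \cref{thm:main_result} there is an isometry $\phi\in\Aut(G)$ with $\phi(\sfp)=\sfp$ such that $\phi(e_\bullet)$ is related to $f_\bullet$ by mutations up to signs. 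The task is thereby reduced to realizing the action of $\phi$ by a sequence of mutations and shifts in $\sfD^b(X)$.

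To do so I would feed $\phi$ through the short exact sequence of \cref{lem:orth_trans_lift_to_isometries} together with the identification $\Orth(\Pic(X))_{K_X}=W_X$ from \cref{lem:weyl_group_is_stabilizer}. This presents the image of $\phi$ as a word in the generating reflections $s_{\alpha_0},\dots,s_{\alpha_8}$ of the Weyl group. The reflections $s_{\alpha_i}=s_{E_i-E_{i+1}}$ only transpose two orthogonal exceptional divisors and are realized by a single elementary mutation. The essential generator is $s_{\alpha_0}=s_{H-E_1-E_2-E_3}$, the standard quadratic Cremona transformation: its realization by an explicit sequence of mutations and shifts is precisely the content of \cref{lem:two_blow_up_realizations_are_related_by_mutations}, which I would transport into $G$ by applying it to the classes supported on $H,E_1,E_2,E_3$ while fixing the remaining exceptional divisors. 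Composing these realizations handles the image of $\phi$ in $W_X$.

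What remains is the kernel of $\Psi\colon\Aut(G)\to W_X$, i.e.\ the line-bundle twist separating $\phi$ from the lift produced above, and the passage from the numerical relation back to $\sfD^b(X)$. For both I would reduce to the del Pezzo setting: by \cref{lem:blow_down_del_pezzo} contracting any $(-1)$-curve turns $X$ into a del Pezzo surface $Y$, and by \cref{lem:exc_obj_del_pezzo} an exceptional object on $Y$ is determined by its class up to an even shift, so condition \labelcref{item:condition_ii} holds on $Y$ and all full exceptional collections there are related by mutations and shifts. Transporting a collection on $X$ down to $Y$ via Orlov's blow-up formula, using this del Pezzo transitivity to absorb the residual twist, and lifting back along the augmentation would upgrade the numerical mutation–sign relation between $\phi(e_\bullet)$ and $f_\bullet$ to a genuine mutation–shift relation between $E_\bullet$ and $F_\bullet$.

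The hard part is the realization of the Cremona reflection: unlike the reflections $s_{E_i-E_{i+1}}$ it is not a mere reordering, and producing an honest sequence of derived mutations rather than a formal lattice isometry is exactly why \cref{lem:two_blow_up_realizations_are_related_by_mutations} required an explicit computation on $\Bl_3\bbP^2$. The bookkeeping that carries that local computation into the rank-twelve lattice $G$ while simultaneously tracking the line-bundle twist in the kernel of $\Psi$ is where the genuine care is needed.
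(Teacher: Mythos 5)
Your outline reproduces the paper's skeleton (\cref{thm:main_result} for the numerical statement, \cref{lem:weyl_group_is_stabilizer} to identify the isometry group with $W_X$, \cref{lem:two_blow_up_realizations_are_related_by_mutations} for the Cremona reflection, and the del Pezzo reduction via \cref{lem:blow_down_del_pezzo} and \cref{lem:exc_obj_del_pezzo}), but there is a genuine gap at the pivot of your third paragraph: one cannot ``upgrade the numerical mutation--sign relation to a genuine mutation--shift relation'' in the way you describe. Condition \labelcref{item:condition_ii} is precisely what is \emph{not} known on $X$ itself: the $9$-point blow-up is not a del Pezzo surface, so \cref{lem:exc_obj_del_pezzo} does not apply to $X$, and performing on $E_\bullet$ the derived mutations underlying the numerical sequence produces complexes $G_i$ with $[G_i]=[F_i]$ in $\sfK_0^{\mathrm{num}}(X)$ but with no identification $G_i \cong F_i[k]$. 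Knowing condition \labelcref{item:condition_ii} on the blow-down $Y$ does not transfer by itself: your step ``transporting a collection on $X$ down to $Y$ via Orlov's blow-up formula'' presupposes that the collection has already been brought, by honest derived mutations, into augmentation form --- one torsion object $\mcO_E(-1)$ and the rest pulled back from $Y$ --- and producing that normal form is the real content of the proof, which your sketch omits. For this you need \cref{thm:generalization_elagin_lunts} (every toric system on $X$ is a standard augmentation; \cref{cor:generalization_elagin_lunts}, which you cite, only gives fullness), followed by the computation $\RHom(\mcO_X(D_1),\mcO_X(D_2))=\bbC[0]$ and the ideal-sheaf sequence, which show that a single mutation replaces the pair by $\langle \mcO_E(d), \mcO_X(D_1)\rangle$, and then twists by multiples of $K_X$ --- the only line-bundle twists realizable by mutations, via the helix structure --- to normalize $d=-1$, then $d_i=1$ and $n=0$. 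Note that this normalization is also what disposes of the kernel of $\Psi$: an arbitrary residual twist is not ``absorbed'' by del Pezzo transitivity (a twist not pulled back from $Y$ is not reachable that way); rather, the standard form $\langle \mcO_X, \mcO_X(E_1'), \dots, \mcO_X(H'), \mcO_X(2H')\rangle$ pins the twist down completely.

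A second, smaller gap sits in ``composing these realizations handles the image of $\phi$ in $W_X$.'' The mutation realization of a simple reflection is tied to the current standard basis: after realizing $s_{v_1}$, the subsequent letters of the word are in general no longer simple reflections with respect to the new basis $E_1', \dots, E_9', H'$, so the realizations do not compose naively. The paper repairs this with the identity $s_{s_v(w)} \circ s_v = s_v \circ s_w$, rewriting $s_{v_1}\circ \dots \circ s_{v_n}=s_{s_{v_1}(v_2)}\circ \dots \circ s_{s_{v_1}(v_n)} \circ s_{v_1}$ and inducting on word length. With these two repairs --- normalizing the actual collections into blow-up form by explicit derived mutations \emph{before} any comparison, and the conjugation-induction argument for Weyl words --- your outline becomes the paper's proof; as written, the load-bearing steps are exactly the ones left as ``bookkeeping.''
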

\begin{proof}
	For the sake of simplicity we call two full exceptional collections \emph{equivalent} if they can be transformed into each other by a sequence of mutations and shifts.
	Let
	\begin{equation}\label{eq:f_e_c_start}
		\sfD^b(X)=\langle \mcO_X(D_1), \dots, \mcO_X(D_n)\rangle
	\end{equation}
	be a full exceptional collection consisting of line bundles, then $\langle \mcO_X(D_2), \dots, \mcO_X(D_n), \mcO_X(D_1-K_X)\rangle$ is an equivalent collection. In particular, any twist with an integer multiple of the canonical class can be realized as a sequence of mutations and shifts.
	By \cref{thm:generalization_elagin_lunts} the toric system associated to \cref{eq:f_e_c_start} contains a $(-1)$-curve. After passing to an equivalent collection, we can assume that $E\coloneqq D_2-D_1$ is a $(-1)$-curve. The left mutation of the pair $\langle\mcO_X(D_1), \mcO_X(D_2)\rangle$ is defined by the exact triangle
	\begin{equation*}
		\mcO_X(D_1)\otimes \RHom(\mcO_X(D_1),\mcO_X(D_2))\xrightarrow{\mathrm{ev}} \mcO_X(D_2) \to \sfL_{\mcO_X(D_1)}(\mcO_X(D_2)).
	\end{equation*}
	On the other hand
	\begin{equation*}
		\RHom(\mcO_X(D_1), \mcO_X(D_2))=H^\bullet(X, \mcO_X(E))=\bbC[0].
	\end{equation*}
	Therefore the ideal sheaf sequence $$0 \to \mcO_X(-E)\to \mcO_X\to \mcO_E\to0$$ yields an exact triangle
	$$\mcO_X(D_1)\otimes \RHom(\mcO_X(D_1), \mcO_X(D_2)) \to \mcO_X(D_2)\to \mcO_E(D_2).$$
	As $E$ is isomorphic to a projective line, we conclude that \cref{eq:f_e_c_start} is equivalent to
	\begin{equation}\label{eq:f_e_c_2}
		\sfD^b(X)=\langle \mcO_E(d), \mcO_X(D_1), \mcO_X(D_3), \dots, \mcO_X(D_n)\rangle
	\end{equation}
	for some $d \in \bbZ$.
	Let $p\colon X \to X'$ be the blow-down of $E$; then $K_X=p^*K_{X'}+E$. Using the projection formula to compute $\mcO_E(d) \otimes \mcO_X(K_X)=\mcO_E(d) \otimes \mcO_X(E)$, we can assume that $d=-1$ by twisting \cref{eq:f_e_c_2} with $(d-1)K_X$.
	This means that \cref{eq:f_e_c_start} is equivalent to a full exceptional collection obtained by the blow-up formula from a del Pezzo surface $X'$. Now \cref{lem:exc_obj_del_pezzo} together with \cref{thm:main_result} implies that we can assume that the exceptional collection on $X'$ comes from iterated blow-ups of a copy of $\bbP^2$. After repeating the computations as above for every $(-1)$-curve in the toric system from $\langle \mcO_X(D_1), \mcO_X(D_3), \dots, \mcO_X(D_n) \rangle$ we can assume that \cref{eq:f_e_c_2} is equivalent to a collection
	\begin{equation*}
		\langle \mcO_{E_1'}(-1), \mcO_{E_2'}(-d_2), \dots, \mcO_{E_9'}(-d_9), \mcO_X(nH'), \mcO_X((n+1)H'), \mcO_X((n+2)H') \rangle,
	\end{equation*}
	where $E_1', \dots, E_9'$ are pairwise disjoint $(-1)$-curves with $E_i'H'=0$ and $H'^2=1$. Twisting the partial sequence $$\langle \mcO_{E_2'}(-d_2), \dots, \mcO_{E_9'}(-d_9), \mcO_X(nH'), \mcO_X((n+1)H'), \mcO_X((n+2)H')\rangle$$ with $K_{X'}$ can be realized as a sequence of mutations, because $(-\otimes \mcO_X(K_{X'})[2])$ is the Serre functor of 
	\begin{equation*}	\label{eq:f_e_c_3}
		\langle \mcO_{E_2'}(-d_2), \dots, \mcO_{E_9'}(-d_9), \mcO_X(nH'), \mcO_X((n+1)H'), \mcO_X((n+2)H')\rangle\cong \sfD^b(X').
	\end{equation*}
	Thus we can assume $d_2=1$ and repeating this procedure, we can assume that $d_i=1$ for all $i$. 
	We have an equivalence $\langle \mcO_X(nH'), \mcO_X((n+1)H'), \mcO_X((n+2)H') \rangle \cong \sfD^b(\bbP^2)$, where $H'$ is identified with a hyperplane class. On $\bbP^2$ we compute that $\langle \mcO_{\bbP^2}, \mcO_{\bbP^2}(H), \mcO_{\bbP^2}(2H) \rangle$ is equivalent to $$\langle \mcO_{\bbP^2}(H), \mcO_{\bbP^2}(2H) ,\mcO_{\bbP^2}(-K_{\bbP^2})=\mcO_{\bbP^2}(3H) \rangle,$$ thus in our situation we can assume that $n=0$.
	Therefore $E_1', \dots, E_9', H'$ can be obtained from $E_1, \dots, E_9, H$ by applying an orthogonal transformation of $\Pic(X)$ fixing the canonical class $-3H+\sum_iE_i=-3H'+\sum_iE_i'$. It remains to show that the two sequences
	\begin{align*}
		\sfD^b(X)={ }&\langle \mcO_X, \mcO_X(E_1), \dots, \mcO_X(E_9), \mcO_X(H), \mcO_X(2H) \rangle \; \text{and} \\
		\sfD^b(X)={ }&\langle \mcO_X, \mcO_X(E_1'), \dots, \mcO_X(E_9'), \mcO_X(H'), \mcO_X(2H') \rangle
	\end{align*}
	are equivalent.
	As explained in \cref{rmk:weyl_group}, by \cite[Thm.~0.1]{harbourne_blowings_up_of_p2_and_their_blowings_down} and \cref{lem:left_orth_divisor_-1} the group $\Orth(\Pic(X))_{K_X}$ coincides with the Weyl group generated by the reflections induced by the simple roots $E_1-E_2, \dots, E_8 - E_9$, and $H-E_1-E_2-E_3$.
	The reflection along the hyperplane orthogonal to the a $(-2)$-class $v$ is given by $$s_v(x)=x+(x\cdot v)v.$$
	Thus if $v=E_i-E_{i+1}$, then $s_v$ fixes $E_1, \dots E_{i-1}, E_{i+2}, \dots, H$ and permutes $E_i$ and $E_{i+1}$. This can be identified with a mutation of the exceptional pair $\langle \mcO_X(E_i), \mcO_X(E_{i+1}) \rangle$. Assume $v=H-E_1-E_2-E_3$; then $s_v$ fixes $E_4, \dots, E_9$. Computing the corresponding mutation (on the blow-up of $3$ points for simplicity) one observes that the full exceptional collection $$\sfD^b(X)=\langle \mcO_X, \mcO_X(E_1), \mcO_X(E_2), \mcO_X(E_3), \mcO_X(H), \mcO_X(2H) \rangle$$ is changed to
	\begin{align*}
		\sfD^b(X) =\langle &\mcO_X, \mcO_X(H-E_2-E_3), \mcO_X(H-E_1-E_3), \mcO_X(H-E_1-E_2), \\
		&\mcO_X(2H-E_1-E_2-E_3), \mcO_X(4H-2E_1-2E_2-2E_3) \rangle.
	\end{align*}
	This is the full exceptional collection obtained by the blow-up formula after blowing down the strict transforms of the lines through 2 of the points. By \Cref{lem:two_blow_up_realizations_are_related_by_mutations} this simple reflection can also be realized as a sequence of mutations and shifts.
	In general an element of the Weyl group is a composition of simple reflections $s_{v_1}\circ \dots \circ s_{v_n}$. Recall that for reflections $s_v\circ s_w \circ s_v=s_{s_v(w)}$ holds. This gives $$s_{s_v(w)} \circ s_v= s_v\circ s_w.$$
	Applying this to our composition of simple reflections we can rewrite 
	$$s_{v_1}\circ \dots \circ s_{v_n}=s_{s_{v_1}(v_2)}\circ \dots \circ s_{s_{v_1}(v_n)} \circ s_{v_1}.$$
	We conclude now by induction: After realizing $s_{v_1}$ by mutations and shifts, $s_{s_{v_1}(v_2)}\circ \dots \circ s_{s_{v_1}(v_n)}$ is a sequence of $n-1$ simple reflections with respect to the new basis of simple roots obtained after applying $s_{v_1}$. Hence it can be realized as a sequence of mutations and shifts.
\end{proof}
As a corollary we obtain a new proof of a result of Kuleshov--Orlov.
\begin{corollary}[{cf.~\cite[Thm.~7.7]{kuleshov_orlov_exceptional_sheaves_on_del_pezzo_surfaces}}]\label{cor:new_proof_trans_del_pezzo}
	Let $X$ be a del Pezzo surface, then any two full exceptional collections on $X$ are related by mutations and shifts.
\end{corollary}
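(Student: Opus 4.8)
The plan is to recognize that a del Pezzo surface is the blow-up of $\bbP^2$ in $k\leq 8$ points in general position (or $\bbP^2$, or $\bbP^1\times\bbP^1$), so it lies within the range $k\leq 9$ for which every ingredient used in \cref{thm:transitivity_on_9_points} remains available, and to combine this with the rigidity of exceptional objects on del Pezzo surfaces in order to pass from line bundles to arbitrary exceptional collections. Concretely, I would verify the two sufficient conditions \labelcref{item:condition_i} and \labelcref{item:condition_ii} and then combine them with \cref{thm:main_result}.

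First I would reduce to the numerical level. By \cref{lem:exc_obj_del_pezzo} every exceptional object on a del Pezzo surface $X$ is a shift of an exceptional sheaf and is determined, up to an even shift, by its class in $\sfK_0(X)$. Hence two full exceptional collections with the same image in $\sfK_0^{\mathrm{num}}(X)$ coincide term by term up to shift and are therefore related by shifts alone; this is condition \labelcref{item:condition_ii}. Moreover any mutation or sign change of an exceptional basis of $\sfK_0^{\mathrm{num}}(X)$ lifts to a mutation or shift of a full exceptional collection representing it. It thus suffices to show that any two exceptional bases of $\sfK_0^{\mathrm{num}}(X)$ are related by mutations up to signs.

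To establish this last point, I would invoke \cref{thm:main_result}, which yields an isometry $\phi$ preserving the point-like element and satisfying that $\phi(e_\bullet)$ is related to $f_\bullet$ by mutations and signs; everything then reduces to realizing $\phi$ itself by mutations and signs, i.e.~to condition \labelcref{item:condition_i}. By \cref{lem:orth_trans_lift_to_isometries} the induced map $\Psi(\phi)$ lies in $\Orth(\Pic(X))_{K_X}$, and by \cref{lem:weyl_group_is_stabilizer}, which holds for all $3\leq n\leq 9$ and so in particular for $k\leq 8$, this group coincides with the Weyl group $W_X$. The simple reflections generating $W_X$ are realized by mutations exactly as in the proof of \cref{thm:transitivity_on_9_points}: the reflections $s_{E_i-E_{i+1}}$ transpose the relevant exceptional pair, while $s_{H-E_1-E_2-E_3}$ is the standard quadratic Cremona transformation, realized by the explicit mutation sequence of \cref{lem:two_blow_up_realizations_are_related_by_mutations}. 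The induction on the word length of an element of $W_X$, using the relation $s_v s_w s_v = s_{s_v(w)}$, realizes an arbitrary element of $W_X$, while the residual line-bundle-twist ambiguity is absorbed in the reduction to a standard basis beginning with the class of $\mcO_X$, twists by canonical classes being realized through the Serre functor.

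The minimal cases $X=\bbP^2$ and $X=\bbP^1\times\bbP^1$ are covered by \cref{prop:result_for_P2} and \cref{prop:result_for_P1_x_P1} together with \cref{lem:exc_obj_del_pezzo}. I expect the only real obstacle to lie in condition \labelcref{item:condition_i}, and within it the single reflection $s_{H-E_1-E_2-E_3}$; since this is exactly the content of \cref{lem:two_blow_up_realizations_are_related_by_mutations}, the corollary demands no new computation beyond checking that the tools behind \cref{thm:transitivity_on_9_points} persist in the range $k\leq 8$, which they do.
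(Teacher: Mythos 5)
Your treatment of the blow-up case follows the paper's proof essentially verbatim: the paper likewise reduces via \cref{thm:main_result_introduction} to collections of rank~$1$ objects, identifies these with line bundles through \cref{lem:exc_obj_del_pezzo}, and then argues exactly as in the proof of \cref{thm:transitivity_on_9_points}, realizing the Weyl group $W_X=\Orth(\Pic(X))_{K_X}$ by mutations, with \cref{lem:two_blow_up_realizations_are_related_by_mutations} supplying the Cremona reflection $s_{H-E_1-E_2-E_3}$ and the identity $s_v\circ s_w\circ s_v=s_{s_v(w)}$ driving the induction on word length. Your verification of condition \labelcref{item:condition_ii} from \cref{lem:exc_obj_del_pezzo} also matches the paper's implicit use of that lemma.

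However, your dispatch of the case $X=\bbP^1\times\bbP^1$ contains a genuine gap. \Cref{prop:result_for_P1_x_P1} only yields that two exceptional bases are related by mutations \emph{up to sign and isometry}, so condition \labelcref{item:condition_i} must still be verified for the quadric --- and here your general machinery breaks down: \cref{lem:weyl_group_is_stabilizer} is stated (and proved, via exceptional curves on blow-ups) only for blow-ups of $\bbP^2$ in $3\le n\le 9$ points, so it says nothing about $\bbP^1\times\bbP^1$, where $\Orth(\Pic(X))_{K_X}$ is generated by the swap of the two rulings rather than by reflections in roots of the form $E_i-E_{i+1}$ or $H-E_1-E_2-E_3$. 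Moreover the residual twist ambiguity is \emph{not} absorbed by the Serre functor as you claim: canonical twists on the quadric are by $\mcO_X(-2,-2)$, which generate only a finite-index subgroup of $\Pic(X)$, so the twists by $\mcO_X(1,0)$ and $\mcO_X(0,1)$ that actually occur require a separate argument. The paper fills precisely this hole with explicit computations: the swap is realized by $\sfL_{2,3}$, and the unit twists by the mutation identities $\sfR_{\mcO_X(a+1,b+1)}(\mcO_X(a,b+1))\cong\mcO_X(a+2,b+1)$ and $\sfR_{\mcO_X(a+1,b+1)}(\mcO_X(a+1,b))\cong\mcO_X(a+1,b+2)$ up to shift, which together reduce any collection of line bundles to $\langle\mcO_X(0,0),\mcO_X(1,0),\mcO_X(0,1),\mcO_X(1,1)\rangle$. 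These computations are routine, so the gap is fillable, but as written your argument for the quadric would not close.
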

\begin{proof}
	Recall that $X$ is either $\bbP^1 \times \bbP^1$ or a blow-up of less than $8$ points in $\bbP^2$ in general position.
	Given the latter case, suppose $E_\bullet$ and $F_\bullet$ are two full exceptional collections on $X$. By \cref{thm:main_result_introduction} we can assume that $E_\bullet$ and $F_\bullet$ consist of rank $1$ objects. Now by \cref{lem:exc_obj_del_pezzo} exceptional rank $1$ objects on $X$ are line bundles and we argue as in the proof of \cref{thm:transitivity_on_9_points}.
	
	Assume $X=\bbP^1\times \bbP^1$, then $\Pic(X)=\bbZ H_1 \oplus \bbZ H_2$ with $H_1H_2=1$ and $H_1^2=H_2^2=0$ and $K_X=-2H_1-2H_2$.
	One computes that the orthogonal transformations of $\Pic(X)$ fixing $K_X$ are exactly the permutations of $H_1$ and $H_2$. Let $E_\bullet$ be a full exceptional collection on $X$. As before we can assume that $E_\bullet$ is a sequence consisting of line bundles. By \cref{thm:main_result_introduction} and \cref{lem:orth_trans_lift_to_isometries} $E_\bullet$ has the form
	\begin{align*}
		&\langle \mcO_X(a,b), \mcO_X(a+1,b), \mcO_X(a,b+1), \mcO_X(a+1,b+1) \rangle \; \text{or}\\
		&\langle \mcO_X(a,b), \mcO_X(a,b+1), \mcO_X(a+1,b), \mcO_X(a+1,b+1) \rangle.
	\end{align*}
	Both are equivalent as the mutation $\sfL_{2,3}$ permutes the middle factors. One computes that the right mutation $\sfR_{\mcO_X(a+1,b+1)}(\mcO_X(a,b+1))$ is equal to $\mcO_X(a+2,b+1)$ up to possible shifts and similarly $\sfR_{\mcO_X(a+1,b+1)}(\mcO_X(a+1,b))$ identifies with $\mcO_X(a+1,b+2)$. We deduce that $E_\bullet$ is equivalent to
	\begin{equation*}
		\langle \mcO_X(a,(b+1)),  \mcO_X(a+1,(b+1)), \mcO_X(a,(b+1)+1), \mcO_X(a+1,(b+1)+1) \rangle,
	\end{equation*}
	hence we realized the twist by $\mcO_X(0,1)$ as a sequence of mutations. Analogously one obtains that the twist by $\mcO_X(1,0)$ is a sequence of mutations and therefore $E_\bullet$ is equivalent to
	\begin{equation*}
		\langle \mcO_X(0,0), \mcO_X(1,0), \mcO_X(0,1), \mcO_X(1,1) \rangle. \qedhere
	\end{equation*}
\end{proof}
\begin{corollary}\label{cor:realizing_isometries_as_mutations}
	Let $X$ be a smooth projective surface over a field $k$ with $\chi(\mcO_X)=1$ and $\rk \sfK_0^{\mathrm{num}}(X) \leq 12$. Then any two exceptional bases $e_\bullet$ and $f_\bullet$ of $\sfK_0^{\mathrm{num}}(X)$ are related by a sequence of mutations and sign changes.
\end{corollary}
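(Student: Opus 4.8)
The plan is to deduce the statement from \cref{thm:main_result} together with the work of this section. By \cref{thm:main_result} there is an isometry $\phi\colon \sfK_0^{\mathrm{num}}(X)\to \sfK_0^{\mathrm{num}}(X)$ with $\phi(\sfp)=\sfp$ such that $\phi(e_\bullet)$ and $f_\bullet$ are related by mutations and signs. Hence it suffices to check condition~(a) from the introduction, namely that $\phi$ itself is realized by a sequence of mutations and sign changes: once this is known, $\phi(e_\bullet)$ is related to $e_\bullet$ by mutations and signs, and composing the two relations yields $e_\bullet \sim f_\bullet$. To see which pseudolattices occur, I would use that $\chi(\mcO_X)=1$ forces $\delta(G)=0$, so \cref{eq:degree_in_term_of_rank} gives $K_G^2=12-\rk\sfK_0^{\mathrm{num}}(X)\ge 0$; by \cref{thm:classification} the pseudolattice $G\coloneqq\sfK_0^{\mathrm{num}}(X)$ is then isometric to $\sfK_0^{\mathrm{num}}(\bbP^2)$, to $\sfK_0^{\mathrm{num}}(\bbP^1\times\bbP^1)$, or to $\sfK_0^{\mathrm{num}}(X_k)$ with $k=\rk G-3\le 9$.

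The minimal cases $\sfK_0^{\mathrm{num}}(\bbP^2)$ and $\sfK_0^{\mathrm{num}}(\bbP^1\times\bbP^1)$, as well as the small blow-ups, are handled by the explicit computations behind \cref{prop:result_for_P2}, \cref{prop:result_for_P1_x_P1} and \cref{cor:new_proof_trans_del_pezzo}, where the needed twists are written out as mutations. For $X_k$ with $3\le k\le 9$ I would realize $\phi$ through the structure of $\Aut(G)$ given in \cref{lem:orth_trans_lift_to_isometries}: every such $\phi$ is a composition of a twist by a line bundle (an element of the kernel $\Pic(X)/\!\sim_{\mathrm{num}}$) with a lift of an element of $\Orth(\Pic(X)/\!\sim_{\mathrm{num}})_{K_X}$. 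The decisive point is that $k\le 9$ makes \cref{lem:weyl_group_is_stabilizer} available, so this stabilizer equals the Weyl group $W_X$; it then suffices to realize a generating set. The simple reflections $s_{E_i-E_{i+1}}$ are realized by the mutation interchanging $\mcO_X(E_i)$ and $\mcO_X(E_{i+1})$, while $s_{H-E_1-E_2-E_3}$ is realized by the Cremona mutation of \cref{lem:two_blow_up_realizations_are_related_by_mutations}; this is precisely the computation in the proof of \cref{thm:transitivity_on_9_points}, and since mutations are defined purely lattice-theoretically the same sequences apply verbatim to $G$. An arbitrary element of $W_X$ is then reached using $s_{s_v(w)}\comp s_v=s_v\comp s_w$, exactly as at the end of \cref{thm:transitivity_on_9_points}.

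The main obstacle is the realization of the kernel, i.e.\ of the twists by line bundles: since $W_X$ fixes $K_X$, conjugating the canonical twist by the reflections does not by itself move it to the other twists. Here I would use Serre operators: twisting the whole collection by $K_G$ is the rotation of the helix, and twisting an admissible block by the canonical class of the corresponding blow-down is the Serre functor of that block, both being sequences of mutations; running these over the blocks produced by contracting the various $(-1)$-classes, together with the $W_X$-action, generates all of $\Pic(X)/\!\sim_{\mathrm{num}}$. The remaining bookkeeping is the structural observation that the isometries realizable by mutations and signs form a subgroup of $\Aut(G)$—which relies on the fact that the mutation action commutes with the action of isometries—so that realizing each generator indeed realizes $\phi$. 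Finally I would stress that the hypothesis $\rk\sfK_0^{\mathrm{num}}(X)\le 12$ is exactly what makes \cref{lem:weyl_group_is_stabilizer} applicable: for higher rank the stabilizer strictly contains $W_X$, and the extra involution fixing $K_X$ is \emph{not} realizable by mutations, which is the source of the failure of transitivity discussed in \cref{sec:blow_up_10_points} and in \cite{krah_upcoming_work}.
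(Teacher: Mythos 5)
Your overall skeleton differs from the paper's proof: the paper does not verify condition \labelcref{item:condition_i} at the lattice level at all. Instead it reduces, via \cref{thm:classification}, to del Pezzo surfaces or the blow-up of $9$ very general points over $\bbC$, mutates both bases by Perling's algorithm (\cref{thm:perlings_algorithm}) into bases of rank-one elements, observes via Riemann--Roch that rank-one numerically exceptional classes have $\cc_2=0$ and hence come from numerically exceptional collections of line bundles, and then quotes fullness (\cref{cor:generalization_elagin_lunts}, Elagin--Lunts) together with the derived-level transitivity results (\cref{thm:transitivity_on_9_points}, \cref{cor:new_proof_trans_del_pezzo}) and projects to $\sfK_0^{\mathrm{num}}(X)$. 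Your alternative --- decompose $\phi$ through the exact sequence of \cref{lem:orth_trans_lift_to_isometries}, realize the Weyl group $W_X=\Orth(\Pic(X))_{K_X}$ via the transposition mutations, the Cremona mutation of \cref{lem:two_blow_up_realizations_are_related_by_mutations}, and the conjugation identity $s_{s_v(w)}\comp s_v = s_v \comp s_w$ --- is coherent as far as it goes, and your bookkeeping observation that the isometries realizable on a fixed mutation-orbit form a subgroup of $\Aut(G)$ is correct.

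The genuine gap is exactly where you locate ``the main obstacle'': the realization of the kernel $\Pic(X)/\!\sim_{\mathrm{num}}$. The helix rotation realizes only the twist by $K_X$, hence the subgroup $\bbZ K_X$; conjugating by realized lifts of $W_X$ sends the twist $\phi_L$ to $\phi_{w(L)}$, and since $W_X$ fixes $K_X$ this produces nothing new, as you yourself note. Your proposed repair via Serre functors of admissible blocks does not close the gap: twisting a proper block by the canonical class of a blow-down is a sequence of mutations applied to a \emph{particular basis} that changes only part of it, so its net effect is not of the form $\phi(e_\bullet)$ for any kernel isometry $\phi$; such operations are not elements of $\Aut(G)$ and cannot be composed inside the realizable subgroup to ``generate all of $\Pic(X)$.'' (In \cref{thm:transitivity_on_9_points} these block twists serve to normalize one specific collection step by step, not to realize a global twist.) Showing directly that an arbitrary global twist $\phi_L(e_\bullet)$ is mutation-equivalent to $e_\bullet$ is essentially the transitivity statement you are trying to prove, so as written the argument is circular at this point; the paper escapes the circle by normalizing arbitrary collections through the toric-system/blow-down structure (\cref{thm:generalization_elagin_lunts}), which requires the fullness input your proof tries to avoid. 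A small further inaccuracy: your closing claim that for $10$ points the involution $\iota$ is \emph{not} realizable by mutations at the numerical level is not established in the paper --- \cref{prop:max_2_numerical_oribts_10_points} gives at most two orbits, and \cref{rmk:sharpness_of_bound_on_rank} explains that whether there are exactly two depends on the open condition \labelcref{item:condition_ii}; the known failure of transitivity for $10$ points lives at the derived level, via the non-full exceptional collection of \cite{krah_upcoming_work}.
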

\begin{proof}
	By Vial's classification, see \cref{thm:classification}, we can assume that $X$ is a del Pezzo surface or the blow-up of $\bbP^2$ in $9$ points. In these cases $\sfK_0^{\mathrm{num}}(X)$ is independent from the base field and the position of points, thus we can assume that the base field is $\bbC$ and the blown up points are in very general position. Moreover, by Perling's algorithm, see \cref{thm:perlings_algorithm}, we can assume that $e_\bullet$ and $f_\bullet$ only consist of rank $1$ objects. Recall that for a numerically exceptional object $E \in \sfD^b(X)$ the Riemann--Roch formula implies $\cc_2(E)=0$, thus we may assume that $e_\bullet$ and $f_\bullet$ arise from two numerically exceptional collections of maximal length consisting of line bundles. Hence, the corollary follows from \cref{thm:transitivity_on_9_points} and \cref{cor:new_proof_trans_del_pezzo}.
\end{proof}

\section{Blow-up of 10 Points}\label{sec:blow_up_10_points}
Although the situation of $9$ blown up points is similar to the case of del Pezzo surfaces, the situation changes if we blow up $10$ points.
In fact, the conclusions of \cref{lem:left_orth_divisor_-1} and \cref{lem:weyl_group_is_stabilizer} do not hold for the blow-up of $10$ points.
\begin{lemma}\label{lem:weil_group_not_stabilizer_10_points}
	Let $X$ be the blow-up of $\bbP^2$ in $10$ points. Then the stabilizer of the canonical class is
	$$\Orth(\Pic(X))_{K_X}=W_X \times \langle \iota \rangle,$$
	where $W_X$ is the reflection group generated by the simple reflections corresponding to the roots $H-E_1-E_2-E_3, E_1-E_2, \dots, E_9-E_{10}$ and $\iota$ is the involution of $\Pic(X)$ fixing $K_X$ and given by multiplication of $-1$ on $K_X^\perp$.
\end{lemma}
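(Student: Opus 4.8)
The plan is to strip off the canonical class and reduce the computation to the orthogonal group of the sublattice $K_X^\perp$. Since $K_X^2=9-10=-1$ and $\Pic(X)$ is unimodular, for every $v\in\Pic(X)$ the element $v+(v\cdot K_X)K_X$ is integral and orthogonal to $K_X$; as $K_X^2\neq 0$ this exhibits $\Pic(X)=\bbZ K_X\oplus K_X^\perp$ as an orthogonal direct sum. Any isometry fixing $K_X$ therefore preserves $K_X^\perp$, and conversely any isometry of $K_X^\perp$ extends by $K_X\mapsto K_X$; restriction thus identifies $\Orth(\Pic(X))_{K_X}$ with $\Orth(K_X^\perp)$. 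Because $K_X=-3H+\sum_i E_i$ is characteristic and of square $-1$, the lattice $K_X^\perp$ is even (for $v\in K_X^\perp$ one has $v^2\equiv v\cdot K_X=0\pmod 2$) and unimodular of signature $(1,9)$, so by the classification of indefinite unimodular lattices it is the unique such lattice $\mathrm{II}_{1,9}=E_{10}$.

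Next I would record the two distinguished subgroups of $\Orth(K_X^\perp)$. The ten classes $H-E_1-E_2-E_3$ and $E_i-E_{i+1}$ ($1\le i\le 9$) are $(-2)$-vectors lying in $K_X^\perp$; their Gram matrix is the generalized Cartan matrix of type $T_{2,3,7}$, of determinant $\pm 1$, so they form a $\bbZ$-basis of $K_X^\perp$ and a system of simple roots, and the group they generate by reflections is $W_X=W(E_{10})\subseteq\Orth(K_X^\perp)$. The associated Coxeter diagram has its branch node at $E_3-E_4$, meeting three arms of $1$, $2$ and $6$ further nodes. On the other side, the involution $-\id$ of $K_X^\perp$ corresponds under the identification above precisely to $\iota$. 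Thus the statement reduces to the lattice-theoretic equality $\Orth(\mathrm{II}_{1,9})=W(E_{10})\times\{\pm\id\}$.

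The key step, and the main obstacle, is to show that the only isometries of $\mathrm{II}_{1,9}$ preserving a chosen sheet of the positive cone are the elements of the reflection group. The isometry $-\id$ exchanges the two sheets of $\{x:x^2>0\}$ whereas $W(E_{10})$ preserves each sheet, so $-\id\notin\Orth^+(\mathrm{II}_{1,9})\supseteq W(E_{10})$ and $\Orth=\Orth^+\sqcup(-\id)\,\Orth^+$; since $-\id$ is central, once $\Orth^+=W(E_{10})$ is established the product is direct. To obtain $\Orth^+=W(E_{10})$ I would invoke the theory of hyperbolic reflection groups: by Vinberg's work $\mathrm{II}_{1,9}$ is reflective, its reflection group acts on hyperbolic $9$-space with fundamental domain the Coxeter simplex attached to the diagram $T_{2,3,7}$, and $\Orth^+$ is the semidirect product of $W(E_{10})$ with the group of lattice symmetries of this chamber, which is the diagram automorphism group. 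Since the three arms of $T_{2,3,7}$ have pairwise distinct lengths the diagram has no nontrivial automorphisms, so the chamber symmetry group is trivial and $\Orth^+(\mathrm{II}_{1,9})=W(E_{10})$.

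Assembling the pieces gives $\Orth(K_X^\perp)=W(E_{10})\times\{\pm\id\}$, and transporting back through the isomorphism $\Orth(\Pic(X))_{K_X}\cong\Orth(K_X^\perp)$ sends $W(E_{10})$ to $W_X$ and $-\id$ to $\iota$, yielding $\Orth(\Pic(X))_{K_X}=W_X\times\langle\iota\rangle$. The only genuinely deep input is the reflectivity of $\mathrm{II}_{1,9}$ together with the triviality of the $T_{2,3,7}$ diagram automorphisms; everything else is the elementary splitting of the lattice and the bookkeeping of the two distinguished isometries.
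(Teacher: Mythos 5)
Your proof is correct and takes essentially the same route as the paper: both split $\Pic(X)=\bbZ K_X\oplus K_X^\perp$ using $K_X^2=-1$, identify $\Orth(\Pic(X))_{K_X}$ with $\Orth(K_X^\perp)$, recognize $K_X^\perp$ as the unique even unimodular lattice of signature $(1,9)$, and rest on Vinberg's computation $\Orth(\mathrm{II}_{1,9})=W(E_{10})\times\{\pm\id\}$. The only difference is in sourcing the key step: the paper cites the Conway--Sloane restatement of Vinberg's theorem together with an explicit matching of simple roots, whereas you re-derive the equality from Vinberg's fundamental-domain theory (reflectivity plus the trivial automorphism group of the $T_{2,3,7}$ diagram, its arms having distinct lengths) and justify the basis claim via the unimodularity of the $T_{2,3,7}$ Cartan matrix --- correct and slightly more self-contained fillings-in of details the paper delegates to a citation.
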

\begin{proof}
	Denote the roots by $\alpha_0\coloneqq H-E_1-E_2-E_3, \alpha_1\coloneqq  E_1-E_2, \dots, \alpha_9\coloneqq E_9-E_{10}$.
	Since $K_X^2=-1$, $\Pic(X)$ splits as an orthogonal direct sum 
	$$\Pic(X) = K_X^\perp \oplus \bbZ K_X.$$
	One can compute that a basis of $K_X^\perp$ is given by the roots $\alpha_0, \dots, \alpha_{9}$. This shows that $K_X^\perp$ is an even unimodular lattice of signature $(1, 9)$. It is known that $\mathrm{II}_{9,1}(-1)$ is the unique even unimodular lattice of signature $(1,9)$. Its orthogonal group was computed by Vinberg \cite{vinberg_some_artithmetical_discrete_groups_in_lobavevski_spaces}. Vinberg's result was rewritten by Conway--Sloane which use the description of $\mathrm{II}_{9,1}$ as the set
	$$\left \{x=(x_0, \dots, x_9) \in \bbZ^{10} \cup (\bbZ+1/2)^{10} \mid x_0 +\dots +x_8-x_9 \in 2\bbZ \right \} \subseteq \bbQ^{10} $$
	with bilinear form $(x, y)\coloneqq \sum_{i=0}^{8}x_iy_i -x_9 y_9$.
	Now \cite[\S\,27 Thm.~1]{conway_sloane_sphere_packings_lattice_and_groups} states that $\Orth(\mathrm{II}_{9,1})=W_{\mathrm{II}_{9,1}} \times \{\pm \id_{\mathrm{II}_{9,1}}\}$, where $W_{\mathrm{II}_{9,1}}$ is the Weyl group of the root system in $\mathrm{II}_{9,1}$ with simple roots
	\begin{equation*}
		\beta_i=(\underbrace{0, \dots, 0}_{i}, 1,-1, \underbrace{0, \dots, 0}_{8-i}) \; \text{for} \; 0 \leq i \leq 7, \beta_8=(1/2, \dots, 1/2), \; \text{and} \; \beta_9=(-1, -1, \underbrace{0, \dots, 0}_{8}).
	\end{equation*}	
	We observe that sending $\alpha_0 \mapsto \beta_0$, $\alpha_i \mapsto \beta_{i-2}$ for $3 \leq i \leq 9$, and $\alpha_i \mapsto \beta_{i+7}$ for $i=1,2$ yields a suitable isomorphism of lattices $K_X^\perp \xrightarrow{\sim}\mathrm{II}_{9,1}(-1)$ such that the $\alpha_i$ are send to the simple roots $\beta_i$. Clearly $\Orth(\Pic(X))_{K_X} = \Orth(K_X^\perp)$, thus the lemma follows.
\end{proof}
A further computation shows that
\begin{equation*}
	D_i  \coloneqq \iota(E_i)=-6H + 2\sum_{j=1}^{10} E_j - E_i \quad \text{and} \quad
	F  \coloneqq \iota(H)=-19H+6\sum_{i=1}^{10}E_i.
\end{equation*}
Thus
\begin{equation*}
	\mcO_X, \mcO_X(D_1) , \dots, \mcO_X(D_{10}), \mcO_X(F), \mcO_X(2F)
\end{equation*}
is a numerically exceptional collection of maximal length on $X$. We show in \cite{krah_upcoming_work} that the collection is exceptional but not full.
The divisors $D_i$ are not effective but satisfy $D_i^2=-1$ and $\chi(D)=1$. This shows that the conclusion of \cref{lem:left_orth_divisor_-1} does not hold for blow-ups of $10$ or more points.
\begin{proposition}
	Let $X$ be the blow-up of $\bbP^2$ in $10$ points in general position. Then $\bbZ^{13} \rtimes \mathfrak{B}_{13}$ does not act transitively on the set of exceptional collections of length $13$.
\end{proposition}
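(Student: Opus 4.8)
The plan is to disprove transitivity by exhibiting two exceptional collections of length $13$ that must lie in different orbits. Since $\rk\sfK_0^{\mathrm{num}}(X)=13$, any exceptional collection of length $13$ is of maximal length. The decisive invariant I would use is \emph{fullness}: the triangulated subcategory $\langle E_1,\dots,E_{13}\rangle\subseteq\sfD^b(X)$ generated by an exceptional collection is left unchanged by every mutation $\sfL_{i,i+1},\sfR_{i,i+1}$ and by every shift, so whether a collection generates all of $\sfD^b(X)$ is constant along each $\bbZ^{13}\rtimes\mathfrak{B}_{13}$-orbit. Hence it suffices to produce one full and one non-full exceptional collection of length $13$.

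For the full collection I would take the one furnished by Orlov's blow-up formula, for instance
\[
\langle\mcO_X,\mcO_X(E_1),\dots,\mcO_X(E_{10}),\mcO_X(H),\mcO_X(2H)\rangle,
\]
which is a full exceptional collection of length $13$. For the non-full collection I would take the one produced by the involution $\iota$ of \cref{lem:weil_group_not_stabilizer_10_points},
\[
\mcO_X,\mcO_X(D_1),\dots,\mcO_X(D_{10}),\mcO_X(F),\mcO_X(2F),
\]
with $D_i=\iota(E_i)$ and $F=\iota(H)$ as computed above. This is a numerically exceptional collection of maximal length; I would invoke the companion paper \cite{krah_upcoming_work}, where it is shown that—when the points are in general position—this collection is a genuine exceptional collection in $\sfD^b(X)$ which is \emph{not} full, its orthogonal complement being a phantom subcategory.

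With these two collections in hand the argument concludes formally: the orbit of the full collection under $\bbZ^{13}\rtimes\mathfrak{B}_{13}$ consists entirely of full exceptional collections, because replacing $(E_i,E_{i+1})$ by $(\sfL_{E_i}E_{i+1},E_i)$ or by $(E_{i+1},\sfR_{E_{i+1}}E_i)$ preserves $\langle E_1,\dots,E_{13}\rangle$, as does any shift. The $\iota$-collection generates a proper subcategory and therefore cannot lie in this orbit, so the two length-$13$ exceptional collections are inequivalent and the action fails to be transitive.

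The only non-formal input is the non-fullness of the $\iota$-collection, which is genuinely the hard part and is supplied by \cite{krah_upcoming_work}; here I would simply cite it. Conceptually the proof pinpoints the source of the failure: by \cref{lem:weil_group_not_stabilizer_10_points} the Weyl group $W_X$ has index two in $\Orth(\Pic(X))_{K_X}$, and while the reflections generating $W_X$ are realized by mutations of line-bundle collections exactly as in \cref{thm:transitivity_on_9_points}, the extra involution $\iota$ is not, precisely because it sends a full collection to a non-full one.
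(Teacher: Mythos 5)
Your proposal is correct and matches the paper's own argument: the paper likewise observes that mutations and shifts preserve the subcategory generated by an exceptional collection, takes the full collection from Orlov's blow-up formula together with the non-full $\iota$-collection (citing \cite{krah_upcoming_work} for exceptionality and non-fullness), and concludes non-transitivity. Your write-up simply makes the invariance and the two witness collections more explicit than the paper's two-line proof.
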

\begin{proof}
	Mutations and shifts do not change the generated subcategory of an exceptional collection. Thus the existence of a full and of a non-full exceptional collection of the same length shows that the action cannot be transitive.
\end{proof}
\begin{proposition}\label{prop:max_2_numerical_oribts_10_points}
	Let $X$ be a smooth projective surface over a field $k$ with $\chi(\mcO_X)=1$ and $\rk \sfK_0^\mathrm{num}(X) = 13$ such that $\sfK_0^\mathrm{num}(X)$ admits an exceptional basis. Then the action of $\{\pm 1\}^{13}\rtimes \mathfrak{B}_{13}$ has at most $2$ orbits.
\end{proposition}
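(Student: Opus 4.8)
The plan is to recast the statement as a bound on the index of a subgroup of the isometry group of $G \coloneqq \sfK_0^{\mathrm{num}}(X)$. Since $G$ is a unimodular geometric pseudolattice of rank $13$ which, by \cref{rmk:defect_zero_for_lattices_from_surfaces}, has defect zero and represents $1$ by the rank one vector $[\mcO_X]$, \cref{thm:classification} identifies it with the numerical Grothendieck group of the blow-up of $\bbP^2$ in $10$ points; in particular $\NS(G)$ is the lattice whose stabilizer group is computed in \cref{lem:weil_group_not_stabilizer_10_points}. Write $M \coloneqq \{\pm 1\}^{13} \rtimes \mathfrak{B}_{13}$ for the group acting by mutations and sign changes and $A \coloneqq \Aut(G)$ for the self-isometries fixing $\sfp$. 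The two actions on the set $\mcE$ of exceptional bases commute, and by \cref{thm:main_result} their combined action is transitive; hence $A$ acts transitively on the set $M\backslash\mcE$ of $M$-orbits. Fixing a base point $e^0_\bullet$ and setting $A_0 = \{\phi \in A \mid \phi(e^0_\bullet) \in M\cdot e^0_\bullet\}$, the commutativity of the two actions shows that $A_0$ is a subgroup and that the number of $M$-orbits equals $[A:A_0]$. It thus suffices to prove $[A:A_0] \le 2$.

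By \cref{lem:orth_trans_lift_to_isometries} and \cref{lem:weil_group_not_stabilizer_10_points} we have a short exact sequence
\begin{equation*}
	1 \to L \to A \xrightarrow{\Psi} W_X \times \langle\iota\rangle \to 1, \qquad L = \Pic(X)/\!\sim_{\mathrm{num}}.
\end{equation*}
I would derive the bound from the two containments $L \subseteq A_0$ and $W_X \subseteq \Psi(A_0)$. Indeed, since $L = \ker \Psi$, these give $\Psi^{-1}(W_X) \subseteq \Psi^{-1}(\Psi(A_0)) = A_0 L = A_0$, and as $\Psi$ is surjective one has $[A : \Psi^{-1}(W_X)] = [\,W_X \times \langle\iota\rangle : W_X] = 2$, whence $[A:A_0] \le 2$. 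Here $L \subseteq A_0$ asserts that every line bundle twist is realizable by mutations and sign changes, while $W_X \subseteq \Psi(A_0)$ asks only that each generating reflection of $W_X$ admit some realizable lift.

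Choosing $e^0_\bullet$ to be the standard blow-up basis of Gram matrix $M_{10}$, the containment $W_X \subseteq \Psi(A_0)$ is established exactly as in \cref{thm:transitivity_on_9_points}: the reflections $s_{E_i - E_{i+1}}$ are realized by transposing the mutually orthogonal rank zero basis vectors, and $s_{H - E_1 - E_2 - E_3}$ by the Cremona mutation of \cref{lem:two_blow_up_realizations_are_related_by_mutations}; both computations involve only a bounded subconfiguration and so are insensitive to the passage from $9$ to $10$ points. The main obstacle is $L \subseteq A_0$. On $\sfK_0^{\mathrm{num}}$ the Serre operator equals the twist $T_{K_X}$ (the shift $[2]$ acts trivially on classes), so the $n$-fold cyclic helix shift realizes $T_{K_X}$ as a word of mutations; and the partial helix shift acting only on the rank one triple $[\mcO_X], [\mcO_X(H)], [\mcO_X(2H)]$ — whose Gram submatrix is $M_0$ — sends it to $[\mcO_X(H)], [\mcO_X(2H)], [\mcO_X(3H)]$ while fixing every rank zero basis vector, which is precisely the action of $T_H$. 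Conjugating these two twists by the realizable lifts from the previous step shows that $L \cap A_0$ is $W_X$-stable; since the $W_X$-orbit of $H$ together with $K_X$ spans $\Pic(X)$ over $\bbZ$, one concludes $L \subseteq A_0$. The two points I expect to check carefully are this final spanning assertion and the verification that the realized reflection and twist isometries induce the intended transformations of $\NS(G)$, so that they genuinely surject onto $W_X$ and exhaust $L$.
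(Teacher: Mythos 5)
Your proof is correct, and it is organized genuinely differently from the paper's, even though it runs on the same computational fuel. The paper argues by normal forms: via \cref{thm:main_result} each orbit contains a representative obtained from the standard collection by some $\varphi\in\Orth(\Pic(X))_{K_X}=W_X\times\langle\iota\rangle$ (\cref{lem:weil_group_not_stabilizer_10_points}), and the $W_X$-part is then absorbed by repeating the reduction of \cref{thm:transitivity_on_9_points}, so in particular the realizability of line-bundle twists by mutations is left implicit, inherited from the derived-category manipulations of the $9$-point proof. You instead convert the orbit count into the index $[A:A_0]$ of the stabilizer of one $M$-orbit---legitimate, since the two actions commute and \cref{thm:main_result} makes $A$ transitive on $M\backslash\mcE$---and bound it by $2$ through the exact sequence of \cref{lem:orth_trans_lift_to_isometries} together with \cref{lem:weil_group_not_stabilizer_10_points}. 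What this buys is an explicit, purely lattice-level proof of $L\subseteq A_0$, which the paper never isolates: your partial helix computation is right ($\sfR_{b_3}\sfR_{b_2}(b_1)=b_1-3b_2+3b_3=[\mcO_X(3H)]$, and $T_H$ fixes the rank-zero vectors since $E_i\cdot H=0$), and both points you flagged do check out. The conjugation identity $\phi T_D\phi^{-1}=T_{\bar{\phi}(D)}$ follows from the uniqueness-of-lifts statement inside the proof of \cref{lem:orth_trans_lift_to_isometries}: both sides lie in $\ker\Psi$ and send the rank-one exceptional class $\phi(-[\mcO_X])$ to the same class, and rank-one numerically exceptional classes are determined by $\cc_1$. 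For the spanning claim, the subgroup generated by $W_X\cdot H$ and $K_X$ contains $s_{H-E_1-E_2-E_3}(H)-H=H-E_1-E_2-E_3$ and all $E_i-E_j$ (differences of the permuted images $2H-E_i-E_j-E_k$), hence $E_1+E_2+E_3$, and $K_X+3H=\sum_{i=1}^{10}E_i$; since $\gcd(3,10)=1$ this yields $E_1$ and then all of $\Pic(X)$. Note that the helix twist $T_{K_X}$ is genuinely needed here: every element of $W_X\cdot H$ has $E$-coefficient sum divisible by $3$ (this is preserved by all the generating reflections), so $W_X\cdot H$ alone generates an index-$3$ sublattice. Two small points to make explicit in a final write-up: to apply the mutation word of \cref{lem:two_blow_up_realizations_are_related_by_mutations} (and your helix block for $T_H$) inside a $13$-element basis, first make the relevant block contiguous using that the rank-zero vectors are mutually completely orthogonal, so mutations transpose them; and your identification of the standard lattice with $\sfK_0^{\mathrm{num}}$ of the $10$-point blow-up uses that rank $13$ excludes the first two cases of \cref{thm:classification}, which is automatic.
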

\begin{proof}
	Without loss of generality, we assume that $X$ is the blow-up of $\bbP^2$ in $10$ points in general position. Applying \cref{thm:main_result}, we know that each orbit contains an exceptional basis of the form
	$$ ([\mcO_X(D_1)], \dots, [\mcO_X(D_{13})]),$$
	such that $D_2-D_1 = \varphi(A_1), D_3-D_2 = \varphi(A_2), \dots, D_{13}-D_{12}=\varphi(A_{12}),$
	where $(A_1, \dots, A_{13})$ is the toric system associated to the collection
	$$ \sfD^b(X) = \langle \mcO_X, \mcO_X(E_1), \dots, \mcO_X(E_{10}), \mcO_X(H), \mcO_X(2H) \rangle$$
	and $\varphi \in \Orth(\Pic(X))_{K_X}$. By \cref{lem:weil_group_not_stabilizer_10_points}, either $\varphi \in W_X$ or $\varphi$ can be written as $\iota \circ w$ for some $w \in W_X$.
	Thus, it is enough to show that for each $\varphi \in W_X$ the collections
	\begin{equation*}
		([\mcO_X(D_1)], \dots, [\mcO_X(D_{13})]) \quad \text{and} \quad ([\mcO_X], [\mcO_X(E_1)], \dots, [\mcO_X(E_{10})], [\mcO_X(H)], [\mcO_X(2H)])
	\end{equation*}
	lie in the same orbit.
	As $\varphi \in W_X$ sends $(-1)$-curves to $(-1)$-curves, $D_2-D_1, \dots, D_{11}-D_1$ is a set of disjoint $(-1)$-curves.
	Thus, we can argue as in \cref{thm:transitivity_on_9_points} to reduce to showing that
	\begin{align*}
	&([\mcO_X], [\mcO_X(E_1)], \dots, [\mcO_X(E_{10})], [\mcO_X(H)], [\mcO_X(2H)])\\
		\text{and}\; &([\mcO_X], [\mcO_X(\varphi(E_1))], \dots, [\mcO_X(\varphi(E_{10}))], [\mcO_X(\varphi(H))], [\mcO_X(\varphi(2H))])
	\end{align*}
	lie in the same orbit. But as $\varphi$ can be factored in a sequence of simple reflections, this follows with the same argument as in \cref{thm:transitivity_on_9_points}.
\end{proof}
\begin{remark}\label{rmk:sharpness_of_bound_on_rank}
	An characterization similar to \cref{lem:exc_obj_del_pezzo} of exceptional objects in $\sfD^b(X)$, where $X$ is the blow-up of $\bbP^2_\bbC$ in $10$ points in general position, would be of particular interest. More precisely, if one could verify condition \labelcref{item:condition_ii} from \cref{se:introduction} for exceptional collections of maximal length on $X$, one could conclude that there are $2$ orbits of the $\{\pm 1\}^{13}\rtimes \mathfrak{B}_{13}$-action on exceptional bases of $\sfK_0^\mathrm{num}(X)$. One orbit would consist of the images of full exceptional collections and the the other orbit of the images of exceptional collections of length $13$ which are not full.
\end{remark}

\printbibliography[title=References]

\end{document}